\newcommand{\inner}[1]{\left\langle #1 \right\rangle}
\newcommand{\norm}[1]{\left\Vert #1\right\Vert}
\newcommand{\bb}[1]{\mathbb{#1}}
\newcommand{\X}{{ \ca{X} }}
\newcommand{\ca}[1]{\mathcal{#1}}
\newcommand{\Diag}[0]{\mathrm{Diag}}
\newcommand{\M}[0]{\mathcal{M}}
\newcommand{\tp}{^\top}
\newcommand{\A}{\ca{A}}
\newcommand{\xk}{{x_{k} }}
\newcommand{\yk}{{y_{k} }}
\newcommand{\wk}{{w_{k} }}
\newcommand{\xkp}{{x_{k+1} }}
\newcommand{\Mxc}{{M_{x,c}}}
\newcommand{\Lxc}{{L_{x,c}}}
\newcommand{\sigmaxc}{ \sigma_{x,c} }
\newcommand{\D}{\ca{D}}
\newcommand{\TX}{{\ca{T}_{\X}}}
\newcommand{\NX}{\ca{N}_{\X}}
\newcommand{\Rn}{\mathbb{R}^n}
\newcommand{\Rp}{\mathbb{R}^p}
\newcommand{\Lsc}{ \sigma_{x,c} }
\newcommand{\Lc}{ {L_{x,c}} }
\newcommand{\Omegax}[1]{ {\Omega_{#1}} }
\newcommand{\K}{\ca{K}}
\def\Thetax{{\bb{B}(x;\,\gamma_x)}}
\newtheorem{theo}{Theorem}[section]
\newtheorem{lem}[theo]{Lemma}
\newtheorem{prop}[theo]{Proposition}
\newtheorem{examples}[theo]{Example}
\newtheorem{cond}[theo]{Condition}
\newtheorem{coro}[theo]{Corollary}
\newtheorem{rmk}[theo]{Remark}
\newtheorem{assumpt}[theo]{Assumption}
\numberwithin{equation}{section}
\title{A Quadratically Convergent Alternating Projection Method for Nonconvex Sets}
\author{
Nachuan Xiao\thanks{School of Data Science, The Chinese University of Hong Kong, Shenzhen, China. (xncxy@cuhk.edu.cn).},~
Shiwei Wang\thanks{
Institute of Applied Mathematics, Academy of Mathematics and Systems Science, Chinese Academy
of Sciences, Beijing, China. This work is done while the author is a visiting scholar at the 
Institute of Operational Research and Analytics, National University of Singapore, Singapore. (wangshiwei@amss.ac.cn).},~
Tianyun Tang\thanks{Department of Statistics, University of Chicago (ttang@nus.edu.sg).},~ 
Kim-Chuan Toh\thanks{Department of Mathematics, and Institute of Operations Research and Analytics, National University of Singapore, Singapore 119076. (mattohkc@nus.edu.sg).}}
\begin{document}
\maketitle

\begin{abstract}
    In this paper, we consider the feasibility problem, which aims to find a feasible point for the constraint set $\{x \in \Rn: c(x) = 0\}$ over a possibly non-regular subset $\X \subset \Rn$. Under the constraint nondegeneracy condition, we propose a modified alternating projection method. In our proposed method, based on the concept of projective mapping for $\X$, we alternate a Newton step for finding an inexact solution within the limiting tangent cone of $\X$ and a projection to $\X$. Under mild conditions, we prove the local quadratic convergence of our proposed method. 
    % Additionally, we develop a quadratically convergent Bregman proximal method based on our proposed alternating projection method. 
    Preliminary numerical experiments demonstrate the high efficiency of our proposed alternating projection method. 
\end{abstract}

\section{Introduction}

In this paper, we consider the following feasibility problem,
\begin{equation}\label{Prob_NCF}
   \text{Find } x \in \K:=\X  \cap \M,
\end{equation}
where $\X$ and $\M$ are two closed and possibly nonconvex subsets of $\Rn$. Moreover, we make the following assumptions on \eqref{Prob_NCF}.
\begin{assumpt}
    \label{Assumption_X}
    \begin{enumerate}
        \item $\X$ is a closed subset of $\Rn$ with easy-to-compute projection. 
        \item There exists a  locally Lipschitz continuous mapping $c: \Rn \to \Rp$ such that $\M = \{x \in \Rn: c(x) = 0\}$. Moreover, the gradient of $c$, denoted as $\nabla c(x) = [\nabla c_1(x), \ldots, \nabla c_p(x)]$, is locally Lipschitz continuous over $\Rn$. 
    \end{enumerate}
\end{assumpt}
Here, for any $x \in \Rn$, its projection onto $\X$ is denoted as $\Pi_{\X}(x) := \mathop{\arg\min}_{y \in \X} ~ \norm{x-y}$. It is worth mentioning that $\X$ is possibly non-regular \cite[Definition 2.4.6]{clarke1990optimization}, and thus possibly nonconvex. Moreover, as the mapping $c$ is possibly nonlinear, the $\M$ is also possibly a nonconvex subset of $\Rn$. 

The feasibility problem in the form of \eqref{Prob_NCF} has wide applications in various areas, such as engineering \cite{byrne1993iterative}, computer science \cite{Barrett2018}, and applied mathematics \cite{cai2010singular}. In particular, problem \eqref{Prob_NCF} plays a fundamental role in constrained optimization with the feasible region $\{x \in \X : c(x) = 0\}$. As shown in \cite{xiao2025ongoing}, such constraints are sufficiently general to encompass a wide range of constrained optimization problems like nonlinear programming \cite{bertsekas1997nonlinear}, second order cone programming \cite{alizadeh2003second}, semidefinite programming \cite{toh2008inexact}, and so on \cite{tang2024feasible, tang2024solving}. Since the projection onto $\K$ is generally intractable, the iterates produced by most existing algorithms (e.g.,  augmented Lagrangian methods \cite{zhao2010newton, wen2010alternating}, sequential quadratic programming \cite{mohammadi2020superlinear, izmailov2012stabilized}, and interior-point methods \cite{nemirovski2008interior, zhou2004polynomiality}) may not be feasible. When these algorithms terminate with moderate accuracy, the resulting solution $\hat{z}$ typically only has moderate accuracy in its feasibility. Consequently, to obtain a solution with high accuracy in its feasibility, it is necessary to find a feasible point $z^*$ in the neighborhood of $\hat{z}$, which is equivalent to solving the feasibility problem \eqref{Prob_NCF}.

A conceptually simple and widely used approach for solving feasibility problems of the form \eqref{Prob_NCF} is the alternating projection method. At each iteration, this method first projects the current iterate onto $\M$, then projects the resulting point onto $\X$. When both $\X$ and $\M$ are closed and convex, the alternating projection method converges at a sublinear rate \cite{bregman1965method}, and linear convergence is achieved when the intersection of the relative interiors of $\M$ and $\X$ is nonempty \cite{bauschke1993convergence}.

For the nonconvex feasibility problem (i.e., at least one of $\X$ or $\M$ is nonconvex), the authors of \cite{lewis2008alternating, lewis2009local} establish the first local linear convergence guarantees for the alternating projection method under certain metric regularity conditions. Subsequently, \cite{bauschke2013restrictedB, bauschke2013restricted} introduce the concept of restricted normal cones and further establish local linear convergence for the 
alternating projection method in nonconvex settings. Additionally, \cite{drusvyatskiy2015transversality} proves the local linear convergence of the alternating projection method under transversality conditions. However, all these results provide only local linear convergence rates for general nonconvex feasibility problems. To achieve quadratic convergence, \cite{schost2016quadratically} proposes a modified alternating projection method for the feasibility problem on the intersection
between an affine space and a manifold $\X$ with $\Pi_{\X}$ being locally $C^2$ under transversality conditions. To the best of our knowledge, no existing work provides a superlinearly convergent alternating projection method for general nonconvex feasibility problems. 

On the other hand, to achieve local superlinear convergence for problem \eqref{Prob_NCF}, another line of research is developed based on the framework of the Levenberg–Marquardt (LM) method \cite{fischer2024levenberg, kanzow2005withdrawn,dan2002convergence}. In these LM-based methods, the feasibility problem \eqref{Prob_NCF} is reformulated as the following constrained optimization problem,
\begin{equation}
    \label{Prob_Con}
    \min_{x \in \X} \quad \frac{1}{2} \norm{c(x)}^2.
\end{equation}
At the $k$-th iteration, these LM-based methods select a surrogate function $q_k(x)$ to approximate $\frac{1}{2} \norm{c(x)}^2$ around $\xk$, and compute $\xkp$ by approximately solving the subproblem $\min_{x \in \X} ~ q_k(x)$. However, as such a subproblem typically lacks a closed-form solution even if $q_k$ is a quadratic function, its solution requires iterative subsolvers (e.g., projected gradient methods \cite{nesterov1983method,nesterov2013gradient} or semi-smooth Newton methods \cite{ito2009semi}). This multi-loop structure significantly undermines the computational efficiency of these LM-based methods.

Therefore, existing alternating projection methods for general nonconvex feasibility problems achieve only local linear convergence, while LM-based methods involve solving complicated subproblems. This motivates the following question:
\begin{quote}
    Can we develop an efficient alternating projection method that achieves local superlinear/quadratic convergence for nonconvex feasibility problems?
\end{quote}

\subsection{Motivation}
To find feasible points in the intersection of a low-rank variety and an affine space, \cite{schost2016quadratically} proposes a modified alternating projection method named NewtonSLRA. The NewtonSLRA method involves computing the projection onto the intersection of the tangent cone of the low-rank variety and the affine space at each iteration, while the transversality condition guarantees the nonemptyness of such an intersection.

To develop an alternating projection method with local quadratic convergence rate,  at each iterate $\xk$, we consider approximating $\X$ locally around $\xk$ by its Clarke tangent cone $\mathcal{T}_{\X}(\xk)$ \cite[Definition 2.54]{bonnans2013perturbation}. An intermediate iterate $\yk$ is then computed as the projection onto $\mathcal{T}_{\X}(\xk) \cap \M$ rather than directly projecting $\xk$ onto $\M$. 

Though $\mathcal{T}_{\X}(\xk)$ is convex for a general closed set $\X$,  $\mathcal{T}_{\X}(\xk) \cap \M$ usually does not have closed form when $\M$ is not an affine space, which means that projection onto $\mathcal{T}_{\X}(\xk) \cap \M$ can be computationally expensive. To address this challenge, we compute the intermediate iterate $\yk$ as the inexact projection onto $\mathrm{lin}(\TX(\xk)) \cap \M$, where $\mathrm{lin}(\ca{C}) = \ca{C} \cap (-\ca{C})$ for any closed cone $\ca{C}$. Finally, the next iterate $\xkp$ is computed as $\xkp = \Pi_{\X}(\yk)$.

To efficiently compute $\yk$ as an inexact projection onto $\mathrm{lin}(\TX(\xk)) \cap \M$, we first consider the regularity condition for the nonconvex feasibility problem \eqref{Prob_NCF}. We say that a point $x \in \K$ satisfies the nondegeneracy condition \cite[Definition 4.70]{bonnans2013perturbation} if
\begin{equation}
\label{Eq_Cond_Nondeg}
    \nabla c(x)\tp \mathrm{lin}(\ca{T}_{\X}(x)) = \Rp.
\end{equation}

It is worth noting that in \cite{schost2016quadratically}, to obtain the local quadratic convergence of its proposed NewtonSLRA, the intersection transversality of the low rank set ${\cal D}_r:=\{X\in\mathbb{R}^{n\times m}\mid {\rm rank}(X)=r\}$ and the affine space $E$ is required. 
In general, intersection tranversality is weaker than the nondegeneracy condition \eqref{Eq_Cond_Nondeg} we need. Indeed, when $\Phi(u)=(\X-u)\times(\M-u)$ is a convex multifunction and the condition in \cite[Proposition 2.97]{bonnans2013perturbation} holds, we know from \cite[Theorem 2.83 and (2.163)]{bonnans2013perturbation} that intersection transversality is equivalent to the well known Robinson constraint qualification (RCQ) \cite[(2.182)]{bonnans2013perturbation}, which is usually less restrictive 
than the nondegeneracy condition \eqref{Eq_Cond_Nondeg}. 
However, for ${\cal D}_r\cap E$, we can show by direct calculation that transversality is equivalent to the nondegeneracy condition. 
Moreover, the analysis in \cite{schost2016quadratically} depends heavily on the affine property of $\M$ and the property that $\X$ is a smooth manifold with $\Pi_{\X}$ being locally twice continuously  differentiable. In contrast, though we need the nondegeneracy condition \eqref{Eq_Cond_Nondeg} for the well definedness of our algorithm, our proposed algorithm is able to handle \eqref{Prob_NCF} with general non-affine subset $\M$ and possibly non-regular subset $\X$.

Recently, in \cite{xiao2025ongoing}, the successful application of the constraint dissolving approach for solving an equality constrained optimization problem over a convex set by removing the equality constraint through a constraint dissolving mapping has shown the great power of the approach. It is natural to ask how this tool can be used for solving \eqref{Prob_NCF}. 
To answer this question, we need to introduce the concept of {\it projective mapping}, which is 
important for constructing the constraint dissolving mapping. We call $Q:\X \to \bb{R}^{n\times n}$ a projective mapping, if it satisfies the following assumption.  
 \begin{assumpt}
        \label{Assumption_Q}
        \begin{enumerate}
            \item[(1)] $Q$ is continuous over $\X$;
            \item[(2)] $Q(x)$ is positive semi-definite for any $x \in \X$;
            \item[(3)] $\mathrm{null}(Q(x)) = \mathrm{range}(\ca{N}_{\X}(x))$ for all $x \in \X$; 
            \item[(4)] There exists a locally bounded function $\rho: \X \to \bb{R}_+$  such that for any $x \in \X$, it holds that 
        \begin{equation}\label{eq:rhobd}
            \mathrm{dist}(x + Q(x)d, \X)\leq \rho(x)\|d\|^2, \quad \forall ~\norm{d}\leq 1. 
        \end{equation}
        \end{enumerate}
    \end{assumpt}

It is worth noting that Assumption \ref{Assumption_Q} (1)-(3) are frequently applied in the framework of constraint dissolving approach\cite{xiao2023dissolving,xiao2025ongoing}. Assumption\ref{Assumption_Q} (4) is newly added, and we will show its validity for a wide range of closed sets $\X$ later in Section \ref{sec:Q}. 
    
Based on the projective mapping $Q$, we define a
constraint dissolving mapping $\A: \X \to \Rn$  by
    \begin{equation}
        \label{Eq_AQ_general}
        \A(x) := x - Q(x) d(x), \quad d(x) := \nabla c(x) \left(\nabla c(x)^\top Q(x) \nabla c(x) + \tau(\norm{c(x)}) I_p\right)^{-1} c(x).
    \end{equation}
    Here $\tau: [0, +\infty) \to [0, +\infty)$ is a locally Lipschitz continuous and strictly increasing function with $\tau(0) = 0$ and 
    $\tau(1) \leq L_{\tau}$ for a prefixed constant $L_{\tau}> 0$. The regularization term $\tau(\norm{c(x)}) I_p$ is introduced to ensure the well-definedness of $\A(x)$ for any $x \in \X$. Moreover, as proven later, for any $x \in \X$ that satisfies the nondegeneracy condition \eqref{Eq_Cond_Nondeg}, the mapping $\A$ is well-defined in a neighborhood of $x$. 

%%%%%%%%%%%%%%%%%%%%%%%%
    \subsection{Contribution}
    In this paper, we propose a modified alternating projection method for solving the nonconvex feasibility problem \eqref{Prob_NCF}, which employs the following update scheme:
    \begin{equation}
        \label{Eq_Feasibility_RestorationQ}
        \xkp \in \Pi_{\X}(\A(\xk)+ r_k).
    \end{equation}
    Here $r_{k}$ is the evaluation error characterizing the inexactness in the evaluation of $\Pi_{\X}(\A(x_k))$.  Throughout this paper, we assume that 
    \begin{equation}
        \label{Eq_Cond_rk}
        \norm{r_{k}} \leq \xi(\xk) \norm{c(\xk)}^2
    \end{equation}
    holds for any $k\geq 0$, where $\xi: \Rn \to \bb{R}_+$ is a locally bounded function, i.e., $\xi$ is bounded over any compact set. It is worth noting that $\Pi_{\X}(\A(\xk))$ does not necessarily need to be a singleton. Here, the inclusion in \eqref{Eq_Feasibility_RestorationQ} means that $x_{k+1}$ can be chosen as an arbitrary point within the subset $\Pi_{\X}(\A(\xk) + r_k)$.

    We prove that, for any $x \in \X \cap \M$ that satisfies the nondegeneracy condition \eqref{Eq_Cond_Nondeg}, there exists a neighborhood $\Omega_x$ of $x$, such that for any $x_0 \in \Omega_x$, the iterates $\{\xk\}$ generated by \eqref{Eq_Feasibility_RestorationQ} converges towards $\X \cap \M$ quadratically. Moreover, by combining \eqref{Eq_Feasibility_RestorationQ} and the projected gradient descent method for \eqref{Prob_Con}, we propose a globally convergent algorithm in Algorithm \ref{Alg:Adap_AP2}. 

    Additionally, when $\X$ is a closed convex subset of $\Rn$ and admits a Bregman kernel function $\phi$, we propose the following Bregman proximal-based method
    \begin{equation}
        \label{Eq_BLM}
        \xkp = (\nabla \phi)^{-1}\left( \nabla \phi(\xk) - d(\xk)  \right).
    \end{equation}
    We prove the local quadratic convergence of \eqref{Eq_BLM} by showing that it fits into the update scheme \eqref{Eq_Feasibility_RestorationQ}. 

    Furthermore, we perform preliminary numerical experiments to evaluate the performance of Algorithm \ref{Alg:Adap_AP2} on  feasibility problems that admit possibly nonconvex $\X$, including the low-rank variety and the $\ell_{q}$-norm ball with $q \in (0,1)$. Our proposed Algorithm \ref{Alg:Adap_AP2} demonstrates its fast local quadratic convergence in all the test instances. In particular, on the feasibility problem of a low-rank variety intersecting an affine space, our proposed Algorithm \ref{Alg:Adap_AP2} shows significant superiority over the NewtonSLRA method  in \cite{schost2016quadratically}. These numerical experiments further demonstrate the promising potential of our proposed Algorithm \ref{Alg:Adap_AP2}.

    \subsection{Organization}
    The outline of the rest of this paper is as follows. In Section 2, we present the notations and preliminary concepts that are necessary for the proofs in this paper. In Section 3, we prove the local quadratic convergence of the scheme \eqref{Eq_Feasibility_RestorationQ}, and present the convergence properties of Algorithm \ref{Alg:Adap_AP2}. In Section 4, we specialize  \eqref{Eq_Feasibility_RestorationQ} to a 
    Bregman proximal-based method for solving \eqref{Prob_NCF} when $\X$ is a closed convex set that admits a Bregman kernal function. 
    Preliminary numerical experiments are presented in Section 5 to demonstrate the efficiency of our proposed algorithm. We conclude the paper in the last section.

%%%%%%%%%%%%%%%%%%%%%%%%%%
\section{Preliminaries}
In this section, we introduce some notation and basic concepts that 
are frequently used in this paper.

For any matrix $A\in\bb{R}^{n\times p}$, 
let $\mathrm{range}(A)$ be the subspace spanned by the column vectors of $A$, $\mathrm{null}(A)$ be the null space of $A$ (i.e., $\mathrm{null}(A) = \{d \in \Rp: A d = 0\}$), 
and $\norm{\cdot}$ denotes the $\ell_2$-norm of a vector or an operator. 
For a subset $\ca{C} \subseteq \Rn$, $\mathrm{range}(\ca{C})$ refers to the smallest subspace of $\Rn$ that contains $\ca{C}$, $\mathrm{lin}(\ca{C})$ refers to the largest subspace of $\Rn$ that is contained in $\ca{C}$, $\mathrm{aff}(\ca{C})$ refers to the affine hull of $\ca{C}$, and $\mathrm{ri}(\ca{C})$ refers to the relative interior of $\ca{C}$. Moreover, when $\ca{C}$ is a subspace of $\Rn$, $\ca{C}^{\perp}$ is defined as the largest subspace that is orthogonal to $\ca{C}$. Additionally, for any $w \in \Rn$, we use the notation $\inner{w,\ca{C}} := \{ \inner{w,d} : d\in \ca{C}\}$ and $w^{\perp} := \{w\}^{\perp}$.
We use $\bb{B}(x;r) = \{ y\in \bb{R}^n: \norm{y-x}\leq r\}$ to denote the closed ball with center  $x$ and radius $r$.

The notation $\mathrm{diag}(A)$ and $\Diag(x)$
stand for the vector formed by the diagonal entries of a matrix $A$,
and the diagonal matrix with the entries of $x\in\bb{R}^n$ as its diagonal, respectively. 
We denote the $r$-th largest singular value of a matrix $A\in \bb{R}^{n\times p}$ by $\sigma_r(A)$, while $\sigma_{\min}(A)$ refers to the smallest singular value of 
$A$. Furthermore, 
the pseudo-inverse of $A$ is denoted by $A^\dagger \in \bb{R}^{p\times n}$, which satisfies $AA^\dagger A = A$, $A^\dagger AA^\dagger = A^\dagger$, and both $A^{\dagger} A$ and $A A^{\dagger}$ are symmetric \cite{golub2013matrix}.

For any closed subset $\ca{C} \subseteq \Rn$ and any $x \in \bb{R}^n$, we define the projection from $x \in \bb{R}^n$ to $\ca{C}$ as 
\begin{equation*}
	\Pi_{\ca{C}}(x) := \mathop{\arg\min}_{y \in \ca{C}} ~ \norm{x-y}. 
\end{equation*}  
Furthermore, $\mathrm{dist}(x, \ca{C})$ refers to the distance between $x$ and $\ca{C}$, i.e. $ \mathrm{dist}(x, \ca{C}) = \norm{x - \Pi_{\ca{C}}(x)}$.

% Most of them are adopted directly from \cite{bonnans2013perturbation,rockafellar2009variational}. 
For a set-valued mapping $\mathcal{F}:\mathbb{R}^n\rightrightarrows\mathbb{R}^n$, 
we define
\begin{equation*}
\begin{aligned}
    & \liminf\limits_{{ x'}\rightarrow x}\mathcal{F}(x'):=\left\{d\in\mathbb{R}^n :\limsup_{x'\rightarrow x}\; ({\rm dist}(d, \mathcal{F}(x')))=0
    \right\},\\
    & \limsup\limits_{{ x'}\rightarrow x}\mathcal{F}(x'):=\left\{d\in\mathbb{R}^n :\liminf_{x'\rightarrow x}\; ({\rm dist}(d, \mathcal{F}(x')))=0
    \right\}. 
\end{aligned}
\end{equation*}

Suppose ${\cal X}$ is any nonempty subset in $\mathbb{R}^n$. The Clarke tangent cone \cite[Definition 2.54]{bonnans2013perturbation} of ${\cal X}$ at $x$ is a closed convex cone defined by
\begin{equation}\label{eq:defT}{\mathcal{T}}_{\X}(x):= \liminf_{t\downarrow0,\, x'\overset{\X}{\rightarrow} x}\frac{\X-x'}{t}.
\end{equation}
Other frequently used tangent cones like the contingent (Bouligand) cone and the inner tangent cone are also defined in \cite[Definition 2.54]{bonnans2013perturbation}. Their relationships with the Clarke tangent cone are given in \cite[page 45]{bonnans2013perturbation}. It is also known from \cite[Proposition 2.55]{bonnans2013perturbation} that they are the same when $\X$ is convex. For the concrete examples considered in Section \ref{sec:Q}, they are different only for $l_0$-norm. 
The following definitions of the regular and limiting normal cones of sets are taken from \cite[Page 213]{rockafellar2009variational}, respectively. 
	Let  $x\in {\cal X}$ be given. 
	We call $\widehat{\cal N}_{\cal X} (x):= \{d\in\mathbb{R}^n\mid \langle d, x'-x\rangle = o(\|x'-x\|) \quad \forall\, x\in {\cal X}\}$ as 
	the regular normal cone to  ${\cal X}$ at point $x$ and
	\begin{equation*}
		{\cal N}_{\cal X} (x):= \limsup\limits_{x'\rightarrow x}\widehat{\cal N}_{\cal X} (x)
	\end{equation*}
	as the limiting normal cone  (also known
	as  Mordukhovich normal cone or basic normal cone) to ${\cal X}$ at point $x$. The polarity relationship of the above sets are discussed in \cite[Figure 6-17]{rockafellar2009variational}.

\section{Convergence analysis}\label{seca}
In this section, we present the convergence analysis of our proposed scheme \eqref{Eq_Feasibility_RestorationQ}. Section \ref{Subsection_Constants} presents the basic definitions of the constants that are essential in our theoretical analysis. We establish the local quadratic convergence properties for \eqref{Eq_Feasibility_RestorationQ} in Section \ref{Subsection_localquadrate}. In Section \ref{Subsection_global_alg}, we develop a globally convergent alternating projection method by hybridizing \eqref{Eq_Feasibility_RestorationQ} with the projected gradient descent method for solving \eqref{Prob_Con}. 

\subsection{Constants and basic properties}
\label{Subsection_Constants}
In this subsection, we present the definitions of some basic constants that are required in our theoretical analysis. 
We begin with the following lemma illustrating the positive-definiteness of $\nabla c(x)\tp Q(x) \nabla c(x)$ for any $x \in \K$. 
 \begin{lem}
    \label{Le_aux_0}
     Suppose Assumption \ref{Assumption_X} and Assumption \ref{Assumption_Q} hold. Then for any $x \in \K$ that satisfies the nondegeneracy condition, it holds that
     \begin{equation}
         \sigma_{\min}(\nabla c(x)\tp Q(x) \nabla c(x)) > 0. 
     \end{equation}
 \end{lem}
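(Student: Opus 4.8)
The plan is to show that $\nabla c(x)\tp Q(x)\nabla c(x)$, which is a symmetric positive semi-definite $p\times p$ matrix by Assumption \ref{Assumption_Q}(2), has trivial null space. Once we know $\mathrm{null}(\nabla c(x)\tp Q(x)\nabla c(x)) = \{0\}$, positive semi-definiteness upgrades to positive definiteness, and hence $\sigma_{\min}(\nabla c(x)\tp Q(x)\nabla c(x)) > 0$. So the whole proof reduces to a null-space computation.

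First I would take an arbitrary $u \in \Rp$ with $\nabla c(x)\tp Q(x)\nabla c(x)\, u = 0$. Multiplying on the left by $u\tp$ and using that $Q(x)$ is positive semi-definite, we get $u\tp \nabla c(x)\tp Q(x)\nabla c(x)\, u = \norm{Q(x)^{1/2}\nabla c(x)\, u}^2 = 0$, so $Q(x)^{1/2}\nabla c(x)\, u = 0$, and therefore $\nabla c(x)\, u \in \mathrm{null}(Q(x)^{1/2}) = \mathrm{null}(Q(x))$. By Assumption \ref{Assumption_Q}(3), $\mathrm{null}(Q(x)) = \mathrm{range}(\ca{N}_{\X}(x))$, so $\nabla c(x)\, u \in \mathrm{range}(\ca{N}_{\X}(x))$.

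Next I would bring in the nondegeneracy condition \eqref{Eq_Cond_Nondeg}, namely $\nabla c(x)\tp \mathrm{lin}(\ca{T}_{\X}(x)) = \Rp$, together with the polarity between the limiting normal cone and the Clarke tangent cone: for the linear spaces involved, $\mathrm{lin}(\ca{T}_{\X}(x))$ and $\mathrm{range}(\ca{N}_{\X}(x))$ are orthogonal complements of each other (this is the relevant consequence of the polarity relations cited from \cite[Figure 6-17]{rockafellar2009variational} and \cite[page 45]{bonnans2013perturbation}, since $\ca{T}_{\X}(x)$ is the polar of $\widehat{\ca{N}}_{\X}(x)$ and taking the lineality space / spanned subspace exchanges a cone's polar for the orthogonal complement of the span). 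Now pick any $w \in \mathrm{lin}(\ca{T}_{\X}(x))$; then $\inner{w, \nabla c(x)\, u} = \inner{\nabla c(x)\tp w, u}$, and since $\nabla c(x)\, u \in \mathrm{range}(\ca{N}_{\X}(x)) = \mathrm{lin}(\ca{T}_{\X}(x))^{\perp}$, this inner product is zero. As $w$ ranges over $\mathrm{lin}(\ca{T}_{\X}(x))$, the vector $\nabla c(x)\tp w$ ranges over all of $\Rp$ by \eqref{Eq_Cond_Nondeg}, so $\inner{v, u} = 0$ for every $v \in \Rp$, forcing $u = 0$. This establishes $\mathrm{null}(\nabla c(x)\tp Q(x)\nabla c(x)) = \{0\}$ and completes the proof.

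The main obstacle I anticipate is not the linear algebra but pinning down precisely the duality/polarity statement relating $\mathrm{range}(\ca{N}_{\X}(x))$ and $\mathrm{lin}(\ca{T}_{\X}(x))$ at a non-regular point $x$, where the regular normal cone, limiting normal cone, Clarke tangent cone, and contingent cone need not coincide. One has in general the polar pairing between the Clarke tangent cone and the Clarke normal cone $\overline{\conv}\,\ca{N}_{\X}(x)$, and I would want to argue that taking $\mathrm{lin}(\cdot)$ of the tangent cone and $\mathrm{range}(\cdot)$ of the (limiting) normal cone are compatible, i.e. $\mathrm{range}(\ca{N}_{\X}(x)) = \mathrm{range}(\overline{\conv}\,\ca{N}_{\X}(x)) = (\mathrm{lin}\,\ca{T}_{\X}(x))^{\perp}$. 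If for some reason only the inclusion $\mathrm{range}(\ca{N}_{\X}(x)) \subseteq (\mathrm{lin}\,\ca{T}_{\X}(x))^{\perp}$ is available, that inclusion alone still suffices for the argument above, since we only used $\nabla c(x)\, u \perp \mathrm{lin}(\ca{T}_{\X}(x))$; so the proof is robust to this subtlety.
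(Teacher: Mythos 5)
Your proof is correct and follows essentially the same route as the paper's: positive semi-definiteness forces $\nabla c(x)u\in\mathrm{null}(Q(x))=\mathrm{range}(\ca{N}_{\X}(x))$, and the orthogonality of $\mathrm{range}(\ca{N}_{\X}(x))$ to $\mathrm{lin}(\ca{T}_{\X}(x))$ combined with the nondegeneracy condition forces $u=0$ (the paper phrases this as a contradiction with $\mathrm{range}(\nabla c(x))\cap\mathrm{range}(\ca{N}_{\X}(x))\neq\{0\}$, citing the identity $\mathrm{range}(\ca{N}_{\X}(x))=(\mathrm{lin}(\ca{T}_{\X}(x)))^{\perp}$ via an adaptation of \cite[(4.174)]{bonnans2013perturbation} and \cite[Theorem 6.28(b)]{rockafellar2009variational}). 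Your version is if anything slightly more careful, since pairing against $\nabla c(x)\tp w$ for $w\in\mathrm{lin}(\ca{T}_{\X}(x))$ needs only the inclusion $\mathrm{range}(\ca{N}_{\X}(x))\subseteq(\mathrm{lin}(\ca{T}_{\X}(x)))^{\perp}$ and handles the degenerate case $\nabla c(x)u=0$ automatically.
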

\begin{proof}
For any $x \in \K$ that satisfies the nondegeneracy condition,  $\nabla c(x)\tp Q(x) \nabla c(x)$ is positive semi-definite. To prove the definiteness of $\nabla c(x)\tp Q(x) \nabla c(x)$, we first assume that there exists a nonzero $d \in \Rp$ such that $\nabla c(x)\tp Q(x) \nabla c(x)d = 0$. Then we can conclude that $\nabla c(x) d \in \mathrm{null}(Q(x))=\mathrm{range}(\ca{N}_\X(x))$. Thus $\mathrm{range}(\nabla c(x)) \cap \mathrm{range}(\NX(x)) \neq \{0\}$. We also know that $\mathrm{range}(\ca{N}_\X(x))$ $=(\mathrm{lin}(\TX(x)))^\perp$, whose proof is similar to that of \cite[(4.174)]{bonnans2013perturbation} with a slight modification by using \cite[Theorem 6.28 (b)]{rockafellar2009variational}.  
Combining the above discussion together, we know that this contradicts the nondegeneracy condition in Definition \ref{Eq_Cond_Nondeg}. Therefore, we can conclude that $\nabla c(x)\tp Q(x) \nabla c(x) \succ 0$. This completes the proof. 
\end{proof}

Therefore, for any $x \in \K$ that satisfies the nondegeneracy condition \eqref{Eq_Cond_Nondeg}, we define
\begin{equation}
    \label{Eq_Defin_sigmas}
    \sigma_{x, Q} := \sigma_{\min}(\nabla c(x)\tp Q(x) \nabla c(x)), \quad \sigmaxc := \sigma_{\min}(\nabla c(x)),
\end{equation}
where $\sigma_{\min}(\cdot)$ is the smallest singular value. 
Since the nondegeneracy condition \eqref{Eq_Cond_Nondeg} is satisfied at $x \in \K$, we have $\sigmaxc > 0$ from \cite[subsection 4.6.]{bonnans2013perturbation}.

Next, for $x\in\K$, we define $\gamma_x$ as 
\begin{equation}\label{eq-gamma-x}
    \gamma_x := \mathop{\arg\max}_{0 < \gamma \leq 1} \Big\{ \gamma \;
    :\;  \inf_{y \in\bb{B}(x;\gamma) } \sigma_{\min}(\nabla c(y)\tp Q(y) \nabla c(y))  \geq \frac{1}{2} \sigma_{x, Q}\Big\}. 
\end{equation}
Note that from Lemma \ref{Le_aux_0}, $\sigma_{x,Q}>0$.
Here the subscript of $\gamma_x$ emphasizes its dependence on 
$x$. Based on the definition of $\gamma_x$, we define 
several constants in Table \ref{Table_Constants}. 
 We also define the neighborhood $\Omega_x := \X\cap \bb{B}(x;\delta_x)$. 
\begin{table}[tb]
\centering
\begin{tabular}{l|l}
\hline
\textbf{Constants} & \textbf{Definition}  \\ \hline
% $\sigma_{x, Q}$ & $\sigma_{\min}(\nabla c(x)\tp Q(x) \nabla c(x))$  \\ \hline
%$\Thetax$ & $\bb{B}(x;\gamma_x)  $ 
%\\ \hline
$\Mxc$ & $\sup_{y \in \Thetax}   \norm{\nabla c(y)}$  \\ \hline
$M_{x,Q}$ & $\sup_{y \in \Thetax}  \norm{Q(y)}$  \\ \hline
$M_{x, \xi}$ & $\sup_{y \in \Thetax}  \xi(y)$  \\ \hline
$\Lxc$ & $\sup_{y, z \in \Thetax, ~ y\neq z}   \frac{\norm{\nabla c(y) - \nabla c(z)}}{\norm{y-z}}$  \\ \hline
$L_{x, \rho}$ & $\sup_{y\in \Thetax}   \rho(y)$  \\ \hline
% $\delta_{c,Q}$ & $$ \\ \hline
$\delta_x$ & 
$\min\left\{
\begin{array}{l}
\frac{\gamma_x}{2},\, \frac{1}{\Mxc},  \,
\, \frac{\sigma_{x,c}}{6L_{x,c}},\,
\frac{\sigma_{x,Q}}{2 M_{x,c}^2},\, 
\frac{\gamma_x\sigma_{x, Q}}{\sigma_{x, Q}+ 4M_{x, Q}M_{x, c}^2}, \,
\frac{M_{x,Q}\sigma_{x,Q}}{4L_{x,\rho}M_{x,c}^2+M_{x,\xi}\sigma_{x,Q}^2},
\\
\frac{\sigma_{x,Q}^2}{(12L_{x, \rho}M_{x, c}^3 + 2 L_{\tau}\sigma_{x, Q} 
+ 2L_{x, c} M_{x, Q}^2 M_{x, c}^2+ 2\sigma_{x, Q}^2  M_{x,c} M_{x, \xi}) 2M_{x,c}}  
\end{array}\right\}$ 
\\[20pt] \hline
$\Omega_x$ & $\X\cap \bb{B}(x;\delta_x)  \subset \Thetax$ 
\\ \hline
$\Xi_x$ & $\X\cap \bb{B}(x;\delta_x') \subset \Omega_x$, where
$ \delta_x' =  \frac{ \sigma_{x, Q}}{32 M_{x, Q} M_{x, c}^2 + 2\sigma_{x, Q}}\delta_x$ 
 \\ \hline
\end{tabular}
\caption{Definitions of constants associated with $\Thetax$. 
}
\label{Table_Constants}
\end{table}

In the following lemma, we illustrate the relationship between $\|c(y)\|$ and $\mathrm{dist}(y, \M)$ for any  $x \in \K$ and any $y \in \Omega_x$. 
It follows from Lemma~\ref{Le_aux_relationship_c_dist_PEJcc} that for any $x \in \K$, $\norm{c(y)} \leq 1$ for all $y \in \Omega_x$.
\begin{lem}
    \label{Le_aux_relationship_c_dist_PEJcc}
    Suppose Assumption \ref{Assumption_X}  holds. Then for any $x \in \K$ that satisfies the nondegeneracy condition, it holds for any 
    $y \in \Omegax{x}$ that
    \begin{equation*}
        \frac{1}{M_{x,c}} \norm{c(y)} \leq \mathrm{dist}(y, \M) \leq \frac{2}{\sigma_{x, c} } \norm{c(y)}.
    \end{equation*}
\end{lem}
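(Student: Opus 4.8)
The plan is to prove the two inequalities separately, exploiting that $x\in\M$ (hence $c(x)=\bzr$) together with the uniform control on $\nabla c$ over the ball $\Thetax$ provided by the constants in Table~\ref{Table_Constants}. Throughout I would use two facts valid on $\Omegax{x}\subseteq\Thetax$. First, the mean value identity $c(u)-c(v)=\int_0^1\nabla c(v+t(u-v))\tp(u-v)\,dt$ gives $\norm{c(u)-c(v)}\le \Mxc\norm{u-v}$ whenever the segment $[u,v]$ lies in the convex ball $\Thetax$. Second, since $\delta_x\le\sigmaxc/(6\Lxc)$, the Lipschitz continuity of $\nabla c$ and Weyl's inequality for singular values yield $\sigma_{\min}(\nabla c(y))\ge\tfrac{5}{6}\sigmaxc$ for every $y\in\Omegax{x}$, so $\nabla c(y)$ has full column rank there.

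For the lower bound, I would let $z:=\Pi_{\M}(y)$ and observe that, because $x\in\M$, one has $\dist(y,\M)\le\norm{y-x}\le\delta_x$, whence $\norm{z-x}\le\norm{z-y}+\norm{y-x}\le 2\delta_x\le\gamma_x$ and $z\in\Thetax$. Since $c(z)=\bzr$, the mean value estimate along the segment $[y,z]\subset\Thetax$ gives $\norm{c(y)}=\norm{c(y)-c(z)}\le\Mxc\norm{y-z}=\Mxc\,\dist(y,\M)$, which is precisely $\frac{1}{\Mxc}\norm{c(y)}\le\dist(y,\M)$. (Combined with $\delta_x\le 1/\Mxc$ this also yields $\norm{c(y)}\le\Mxc\delta_x\le 1$, the remark preceding the lemma.)

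For the upper bound I would exhibit an explicit point of $\M$ near $y$ via a Gauss--Newton construction. Consider the path $w(\cdot)$ defined by $w(0)=y$ and $\dot w(s)=-\nabla c(w(s))\big(\nabla c(w(s))\tp\nabla c(w(s))\big)^{-1}c(w(s))$, which is well defined as long as $w(s)$ stays where $\nabla c$ has full column rank. Along this path $\frac{d}{ds}c(w(s))=-c(w(s))$, so $c(w(s))=e^{-s}c(y)$, and $\norm{\dot w(s)}\le\frac{1}{\sigma_{\min}(\nabla c(w(s)))}\norm{c(w(s))}\le\frac{6}{5\sigmaxc}e^{-s}\norm{c(y)}$. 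Integrating gives the a priori bound $\norm{w(s)-y}\le\frac{6}{5\sigmaxc}\norm{c(y)}$ for all $s$; since $\norm{c(y)}\le 1$ is small relative to the radii in Table~\ref{Table_Constants}, this confines $w(s)$ to $\Thetax$, so the singular-value bound persists and the path exists for all $s\ge 0$ by continuation. Letting $s\to\infty$, integrability of $\norm{\dot w}$ produces a limit $w^\star$ with $c(w^\star)=\bzr$, i.e.\ $w^\star\in\M$, satisfying $\norm{w^\star-y}\le\frac{6}{5\sigmaxc}\norm{c(y)}\le\frac{2}{\sigmaxc}\norm{c(y)}$. Hence $\dist(y,\M)\le\norm{w^\star-y}\le\frac{2}{\sigmaxc}\norm{c(y)}$.

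The routine ingredients are the mean value estimate and the singular-value perturbation bound. I expect the main obstacle to be the upper bound, specifically verifying that the Gauss--Newton path (or, in a fully discrete formulation, the Newton iterates) never leaves the neighborhood $\Thetax$ on which $\nabla c$ stays full rank and Lipschitz. This is exactly what the displacement bound $\frac{6}{5\sigmaxc}\norm{c(y)}$ together with the conservative choice of $\delta_x$ is designed to guarantee; once confinement is secured, the exponential decay of $c(w(s))$ delivers both the membership $w^\star\in\M$ and the clean constant $\frac{2}{\sigmaxc}$, the factor beyond $\frac{6}{5}$ serving as a safety margin.
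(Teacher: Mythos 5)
Your lower bound is essentially the paper's: both take $z=\Pi_{\M}(y)$, check $z\in\Thetax$, and apply the mean value estimate $\norm{c(y)}=\norm{c(y)-c(z)}\le \Mxc\norm{y-z}$ (the paper delegates this to an external lemma, you carry it out directly). For the upper bound, however, you take a genuinely different route. The paper stays with the same projection point $z$ and exploits the first-order optimality of the projection, namely $y-z\in\mathrm{range}(\nabla c(z))$, so that $\norm{\nabla c(z)\tp(y-z)}\ge\sigma_{\min}(\nabla c(z))\norm{y-z}$; combined with the mean value theorem and $\sigma_{\min}(\nabla c(z))\ge\sigmaxc-2\delta_x\Lxc$ this gives $\norm{c(y)}\ge\frac{\sigmaxc}{2}\dist(y,\M)$ directly, with no need to construct a point of $\M$. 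You instead build an explicit feasible point via a continuous Gauss--Newton flow, obtaining $c(w(s))=e^{-s}c(y)$ and the displacement bound $\frac{6}{5\sigmaxc}\norm{c(y)}$. Your argument is a quantitative Lyusternik--Graves-type construction: it is more machinery, but it produces a nearby feasible point rather than just a distance estimate, and it would survive replacing $\Pi_\M(y)$ by any such constructed point. The paper's argument is shorter and, importantly, is exactly calibrated to the constants in Table~\ref{Table_Constants}.

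That calibration is where your proposal has a genuine (though repairable) gap: the confinement step. Both the Lipschitz constant $\Lxc$ and the bound $\sigma_{\min}(\nabla c(\cdot))\ge\tfrac{5}{6}\sigmaxc$ are only available while the flow stays in $\Thetax$ and within distance $\sigmaxc/(6\Lxc)$ of $x$. Your a priori displacement bound gives $\norm{w(s)-x}\le\delta_x+\frac{6}{5\sigmaxc}\norm{c(y)}\le\bigl(1+\frac{6\Mxc^2}{5\sigmaxc}\bigr)\delta_x$, and none of the constraints defining $\delta_x$ in Table~\ref{Table_Constants} forces this quantity to stay below $\sigmaxc/(6\Lxc)$ or $\gamma_x$ --- the constraint $\delta_x\le\sigmaxc/(6\Lxc)$ is already fully consumed by points of $\Omegax{x}$ themselves, leaving no slack for the extra excursion of the path. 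So ``$\norm{c(y)}\le 1$ is small relative to the radii'' is asserted rather than verified, and as stated it is false for the paper's $\delta_x$. The fix is routine (shrink $\delta_x$ by an additional factor such as $\sigmaxc/(\sigmaxc+2\Mxc^2)$ and rerun the bootstrap/continuation argument), but it must be done explicitly, whereas the paper's projection-based argument needs no such enlargement of the working region since $z=\Pi_\M(y)$ automatically satisfies $\norm{z-x}\le 2\delta_x\le\gamma_x$.
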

\begin{proof}
    For any $x \in \K$ and any $y \in \Omega_x$, it follows from a similar proof of  \cite[Lemma 1]{xiao2025cdopt} and the convexity of $\Thetax$ that 
    $\norm{c(y)} \leq M_{x,c} \mathrm{dist}(y, \M)$, and hence
    \begin{equation*}
        \frac{1}{M_{x,c}}\norm{c(y)} \leq  \mathrm{dist}(y, \M).
    \end{equation*}

    Let $z \in \Pi_{\M}(y)$, then from the definition of $z$, we can conclude that $\norm{z-y}\leq \norm{y-x}$ since $x\in\M$. 
    Thus $\norm{z-x} \leq \norm{z-y} + \norm{y-x} \leq 2\delta_x \leq \gamma_x$, and hence $z \in \Thetax$. By the mean-value theorem, for any fixed $\nu \in \bb{R}^p$, there exists a point $\xi_\nu \in  \bb{R}^n$ that is a convex combination of $y$ and $z$ such that $ \nu\tp c(y) = (y-z)\tp\nabla c(\xi_\nu)\nu $. 
    Then it follows from the definition of $z$ that $y-z \in \mathrm{range}(\nabla c(z))$. Moreover, using the definition of $L_{x,c}$, 
    we have that
    \begin{equation}
    \label{Eq_Le_aux_relationship_c_dist_PEJcc_0}
        \sigma_{\min}(\nabla c(z)) \geq \sigma_{\min}(\nabla c(x)) - L_{x, c} \norm{z - x} \geq \sigma_{x, c}- 2\delta_x L_{x, c}. 
    \end{equation}

    As a result, let $\tilde{\nu} = \frac{\nabla c(z)\tp(y-z)}{\norm{\nabla c(z)\tp(y-z)}}$, we have
	\begin{equation*}
		\begin{aligned}
			&\norm{c(y)}= \sup_{\nu \in \bb{R}^p, \norm{\nu} = 1} (y-z)\tp\nabla c(\xi_\nu)\nu  \geq (y-z)\tp\nabla c(\xi_{\tilde{\nu}})\tilde{\nu}  \\
			={}&  (y-z)\tp\nabla c(z)\tilde{\nu} - (y-z)\tp\left(\nabla c(z) - \nabla c(\xi_{\tilde{\nu}})\right)\tilde{\nu}  \\
			={}& \norm{ \nabla c(z)\tp (y-z)} - (y-z)\tp\left(\nabla c(z) - \nabla c(\xi_{\tilde{\nu}})\right)\tilde{\nu} \\
			\geq{}& \norm{ \nabla c(z)\tp (y-z)} - \Lc \norm{y-z}^2 
            \\
			\geq{}& (\Lsc - 3\delta_x \Lc )\mathrm{dist}(y, \M)  \geq  \frac{\Lsc }{2} \mathrm{dist}(y, \M).
		\end{aligned}
	\end{equation*}
    Here the second inequality uses the fact that $y-z \in \mathrm{range}(\nabla c(z))$ and $\sigma_{\min}(\nabla c(z)) \geq \sigma_{x, c} - 2\delta_x L_{x, c}$ from \eqref{Eq_Le_aux_relationship_c_dist_PEJcc_0}.
    This completes the proof. 
\end{proof}

Next, we present the following technical lemma which will be used later.
\begin{lem}
    \label{Le_aux_2}
    Suppose Assumption \ref{Assumption_X} and Assumption \ref{Assumption_Q} hold. Then for any $z, r \in \Rn$, any $w_1 \in \Pi_{\X}(z)$ and any $w_2 \in \Pi_{\X}(z + r)$, it holds that 
    \begin{equation}
        \norm{w_1 - w_2} \leq 2\mathrm{dist}(z, \X) + 2\norm{r}. 
    \end{equation}
\end{lem}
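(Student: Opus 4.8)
The plan is to bound $\norm{w_1-w_2}$ by inserting intermediate comparison points and repeatedly using the defining property of the metric projection, namely that $w_1$ is a closest point of $\X$ to $z$ and $w_2$ is a closest point of $\X$ to $z+r$. First I would record the two elementary facts $\norm{z-w_1} = \dist(z,\X)$ and $\norm{z+r-w_2} = \dist(z+r,\X)$, and note that since $w_1 \in \X$ we also have $\dist(z+r,\X) \le \norm{z+r-w_1} \le \norm{z-w_1} + \norm{r} = \dist(z,\X) + \norm{r}$.

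Next I would estimate $\norm{w_1-w_2}$ via the triangle inequality through the point $z+r$ (or equivalently through $z$): write
\begin{equation*}
\norm{w_1 - w_2} \le \norm{w_1 - (z+r)} + \norm{(z+r) - w_2}.
\end{equation*}
The second term is $\dist(z+r,\X) \le \dist(z,\X) + \norm{r}$ by the bound just derived. The first term satisfies $\norm{w_1 - (z+r)} \le \norm{w_1 - z} + \norm{r} = \dist(z,\X) + \norm{r}$. Adding these two estimates gives $\norm{w_1-w_2} \le 2\dist(z,\X) + 2\norm{r}$, which is exactly the claimed inequality.

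This argument is essentially a routine exercise in manipulating the projection inequality and the triangle inequality; there is no real obstacle, and in particular it does not use Assumption~\ref{Assumption_Q} or any structure of $\X$ beyond closedness (which guarantees the projections are nonempty). The only point worth being slightly careful about is that $\Pi_{\X}(z)$ and $\Pi_{\X}(z+r)$ need not be singletons, but the stated bound holds for \emph{any} choice of $w_1$ and $w_2$ in these sets, and the derivation above indeed works for arbitrary such elements since it only uses $w_1, w_2 \in \X$ together with the minimality of $\norm{z-w_1}$ and $\norm{z+r-w_2}$. If one wanted the slightly sharper constant, one could instead compare $w_1$ and $w_2$ directly through $z$ using $\norm{w_2 - z} \le \norm{w_2 - (z+r)} + \norm{r} \le \norm{w_1 - (z+r)} + \norm{r} \le \dist(z,\X) + 2\norm{r}$, but the stated version with $2\dist(z,\X)+2\norm{r}$ suffices and follows immediately.
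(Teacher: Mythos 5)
Your proof is correct and uses essentially the same argument as the paper: the triangle inequality through $z+r$ combined with the minimality of $w_2$, i.e., $\norm{(z+r)-w_2}\leq\norm{(z+r)-w_1}$. Your observation that Assumption \ref{Assumption_Q} is not actually needed (only closedness of $\X$) is also accurate.
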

\begin{proof}
    For any $z, r \in \Rn$, any $w_2 \in \Pi_{\X}(z + r)$ and any $w_1 \in \Pi_{\X}(z)$, we have
    \begin{equation}
        \norm{z - w_1} = \mathrm{dist}(z, \X), \quad  \mathrm{dist}(z +r, \X) = \norm{z +r - w_2} \leq \norm{z+r - w_1}.
    \end{equation}
    As a result, we have
    \begin{equation}
        \begin{aligned}
            &\norm{w_1 - w_2} \leq \norm{w_1 - z} + \norm{z - (z+r)} + \norm{(z+r) -w_2} \\
            \leq{}& \norm{w_1 - z} + \norm{z - (z+r)} + \norm{(z+r) -w_1} \\
            \leq{}& 2\norm{w_1 - z} + 2\norm{r} = 2\mathrm{dist}(z, \X) + 2\norm{r}.
        \end{aligned}
    \end{equation}
    This completes the proof. 
\end{proof}

%%%%%%%%%%%%%%%%%%%%%%%%%%%%%%%%%%%%%%%%%%%%%%%%%%
\subsection{Local quadratic convergence}
\label{Subsection_localquadrate}
 In this section, we analyze the local convergence properties of the scheme \eqref{Eq_Feasibility_RestorationQ}. We prove that, when initialized at $x_0$ sufficiently close to $\K$, the sequence ${\xk}$ generated by \eqref{Eq_Feasibility_RestorationQ} converges to $\K$ quadratically.

We begin our theoretical analysis with the following lemma 
bounding the distance between $\A(y)$ and $y$ for any $y \in \Omega_x$. 
 \begin{lem}
    \label{Le_aux_1}
     Suppose Assumption \ref{Assumption_X} and Assumption \ref{Assumption_Q} hold. Then for any $x \in \K$ that satisfies the nondegeneracy condition~\ref{Eq_Cond_Nondeg}, it holds for any $y \in \Omegax{x}$ that 
     \begin{equation}
         \norm{\A(y) - y} \leq \frac{2 M_{x, Q} M_{x, c}}{ \sigma_{x, Q}} \norm{c(y)}. 
     \end{equation}
 \end{lem}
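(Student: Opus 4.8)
The plan is to unwind the definition of $\A(y) = y - Q(y) d(y)$ in \eqref{Eq_AQ_general}, so that $\norm{\A(y) - y} = \norm{Q(y) d(y)} \leq \norm{Q(y)} \cdot \norm{d(y)}$. The first factor is bounded above by $M_{x,Q}$ for $y \in \Omega_x \subset \Thetax$ by the definition of that constant in Table \ref{Table_Constants}. So the whole task reduces to bounding $\norm{d(y)}$, where
\begin{equation*}
    d(y) = \nabla c(y)\left(\nabla c(y)\tp Q(y)\nabla c(y) + \tau(\norm{c(y)}) I_p\right)^{-1} c(y).
\end{equation*}

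For this, I would estimate the two pieces separately. For the matrix $\nabla c(y)$ in front, I use $\norm{\nabla c(y)} \leq M_{x,c}$, again from the table. For the inverted matrix, I would argue that $\nabla c(y)\tp Q(y)\nabla c(y) \succeq \tfrac{1}{2}\sigma_{x,Q} I_p$: indeed, since $y \in \Omega_x = \X \cap \bb{B}(x;\delta_x)$ and $\delta_x \leq \gamma_x/2 \leq \gamma_x$, the defining property \eqref{eq-gamma-x} of $\gamma_x$ gives $\sigma_{\min}(\nabla c(y)\tp Q(y)\nabla c(y)) \geq \tfrac{1}{2}\sigma_{x,Q}$. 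Adding the nonnegative term $\tau(\norm{c(y)}) I_p$ only increases the smallest eigenvalue, so the full matrix inside the inverse is $\succeq \tfrac{1}{2}\sigma_{x,Q} I_p$, hence its inverse has operator norm at most $2/\sigma_{x,Q}$. Multiplying the three bounds gives $\norm{d(y)} \leq \frac{2 M_{x,c}}{\sigma_{x,Q}}\norm{c(y)}$, and then $\norm{\A(y) - y} \leq M_{x,Q}\norm{d(y)} \leq \frac{2 M_{x,Q} M_{x,c}}{\sigma_{x,Q}}\norm{c(y)}$, as claimed.

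There is no serious obstacle here; the only point requiring a little care is the well-definedness of $d(y)$ (i.e., that the matrix being inverted is actually invertible for all $y \in \Omega_x$), which is exactly what the lower bound $\nabla c(y)\tp Q(y)\nabla c(y) + \tau(\norm{c(y)}) I_p \succeq \tfrac{1}{2}\sigma_{x,Q} I_p \succ 0$ establishes; this in turn rests on Lemma \ref{Le_aux_0} (to know $\sigma_{x,Q} > 0$) together with the choice of $\gamma_x$ in \eqref{eq-gamma-x}. One should also note that the regularization term does not help lower-bound but also cannot hurt, since $\tau \geq 0$; so it is harmless to simply discard it in the eigenvalue estimate. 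I would phrase the argument in exactly this order: (1) reduce to bounding $\norm{d(y)}$; (2) lower-bound the inverted matrix using $\delta_x \leq \gamma_x$ and \eqref{eq-gamma-x}; (3) upper-bound $\norm{\nabla c(y)}$ and $\norm{Q(y)}$ by the table constants; (4) combine.
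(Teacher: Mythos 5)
Your proposal is correct and follows essentially the same route as the paper's own proof: write $\A(y)-y=-Q(y)d(y)$, bound $\norm{Q(y)}$ and $\norm{\nabla c(y)}$ by the Table \ref{Table_Constants} constants, and lower-bound the inverted matrix by $\tfrac{1}{2}\sigma_{x,Q}I_p$ via the definition of $\gamma_x$ in \eqref{eq-gamma-x} (discarding the nonnegative $\tau$ term). The paper compresses all of this into a single displayed chain of inequalities; your version merely makes the same steps explicit.
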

 \begin{proof}
     For any $x \in \K$ that satisfies the nondegeneracy condition and for any $y \in \Omega_x$, it holds that 
     \begin{equation}
         \begin{aligned}
             &\norm{\A(y) - y} = \norm{Q(y) d(y)}
             \leq \frac{\norm{Q(y)} \norm{\nabla c(y)}}{\sigma_{\min}(\nabla c(y)\tp Q(y)\nabla c(y))} \norm{c(y)} \leq \frac{2 M_{x, Q} M_{x, c}}{ \sigma_{x, Q}} \norm{c(y)}. 
         \end{aligned}
     \end{equation}
     Here the first inequality directly follows the definition of $\gamma_x$ and the choices of $\Omega_x$. Moreover, the second inequality uses the definitions in Table \ref{Table_Constants}. 
     This completes the proof. 
 \end{proof}
 
Next, we present the following lemma showing that the distance between $\A(y)$ and $\X$ is proportional to $\norm{c(y)}^2$. 
\begin{lem}\label{lemQ}
    Suppose Assumption \ref{Assumption_X} and Assumption \ref{Assumption_Q} hold. Then for any $x \in \K$ that satisfies the nondegeneracy condition, it holds for any $y \in \Omegax{x}$ that $$\norm{\Pi_{\X}(\A(y)) - \A(y)} \leq \frac{4L_{x, \rho}M_{x, c}^2}{\sigma_{x, Q}^2} \norm{c(y)}^2. $$ 
\end{lem}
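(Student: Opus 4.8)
The plan is to observe that $\norm{\Pi_{\X}(\A(y)) - \A(y)}$ is exactly $\mathrm{dist}(\A(y),\X)$, and that by definition \eqref{Eq_AQ_general} we can write $\A(y) = y - Q(y)d(y) = y + Q(y)\bigl(-d(y)\bigr)$, which has precisely the form $y + Q(y)d$ appearing in Assumption \ref{Assumption_Q}(4). Hence, provided the direction $-d(y)$ has norm at most $1$, the quadratic estimate \eqref{eq:rhobd} gives $\mathrm{dist}(\A(y),\X) \le \rho(y)\norm{d(y)}^2 \le L_{x,\rho}\norm{d(y)}^2$, where the last step uses $y \in \Omega_x \subset \Thetax$ and the definition $L_{x,\rho} = \sup_{y\in\Thetax}\rho(y)$ in Table \ref{Table_Constants}. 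So the whole argument reduces to two quantitative facts: a linear bound on $\norm{d(y)}$ in terms of $\norm{c(y)}$, and the verification that $\norm{d(y)}\le 1$ throughout $\Omega_x$.

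First I would bound $\norm{d(y)}$, essentially repeating the computation already used in the proof of Lemma \ref{Le_aux_1}: since $\tau(\norm{c(y)})I_p \succeq 0$ only enlarges the smallest eigenvalue, and since $y \in \Omega_x \subset \bb{B}(x;\gamma_x)$ (because $\delta_x \le \gamma_x/2$ by Table \ref{Table_Constants}), the definition \eqref{eq-gamma-x} of $\gamma_x$ yields $\sigma_{\min}(\nabla c(y)\tp Q(y)\nabla c(y)) \ge \tfrac12 \sigma_{x,Q}$, whence
\begin{equation*}
\norm{d(y)} \le \frac{\norm{\nabla c(y)}}{\sigma_{\min}(\nabla c(y)\tp Q(y)\nabla c(y))}\,\norm{c(y)} \le \frac{2 M_{x,c}}{\sigma_{x,Q}}\,\norm{c(y)}.
\end{equation*}
Next I would check $\norm{d(y)}\le 1$ on $\Omega_x$. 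Since $x \in \K \subset \M$ and $y \in \bb{B}(x;\delta_x)$, we have $\mathrm{dist}(y,\M) \le \norm{y-x} \le \delta_x$, so Lemma \ref{Le_aux_relationship_c_dist_PEJcc} gives $\norm{c(y)} \le M_{x,c}\,\mathrm{dist}(y,\M) \le M_{x,c}\delta_x$. Because $\delta_x \le \sigma_{x,Q}/(2 M_{x,c}^2)$ by Table \ref{Table_Constants}, this forces $\norm{c(y)} \le \sigma_{x,Q}/(2 M_{x,c})$, and combining with the previous display gives $\norm{d(y)} \le \tfrac{2 M_{x,c}}{\sigma_{x,Q}}\cdot\tfrac{\sigma_{x,Q}}{2 M_{x,c}} = 1$.

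Finally I would assemble the pieces: applying Assumption \ref{Assumption_Q}(4) at the point $y\in\X$ with direction $-d(y)$ (legitimate by the bound $\norm{d(y)}\le 1$), using $\A(y) = y + Q(y)(-d(y))$, $\rho(y)\le L_{x,\rho}$, and then the linear bound on $\norm{d(y)}$, yields
\begin{equation*}
\norm{\Pi_{\X}(\A(y)) - \A(y)} = \mathrm{dist}(\A(y),\X) \le L_{x,\rho}\norm{d(y)}^2 \le L_{x,\rho}\left(\frac{2 M_{x,c}}{\sigma_{x,Q}}\right)^2\norm{c(y)}^2 = \frac{4 L_{x,\rho}M_{x,c}^2}{\sigma_{x,Q}^2}\norm{c(y)}^2,
\end{equation*}
which is the claimed inequality. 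There is no genuine analytic obstacle here; the only point that requires care is the bookkeeping with the constants in Table \ref{Table_Constants} that certifies $\norm{d(y)}\le 1$ on all of $\Omega_x$ — this is exactly what is needed to bring the quadratic bound \eqref{eq:rhobd} of Assumption \ref{Assumption_Q}(4) into play, and it is the reason the term $\sigma_{x,Q}/(2M_{x,c}^2)$ was built into the definition of $\delta_x$.
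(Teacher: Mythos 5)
Your argument is correct and follows essentially the same route as the paper's own proof: bound $\norm{d(y)}$ by $\tfrac{2M_{x,c}}{\sigma_{x,Q}}\norm{c(y)}$ via the lower bound $\sigma_{\min}(\nabla c(y)\tp Q(y)\nabla c(y))\ge \tfrac12\sigma_{x,Q}$ on $\Omega_x$, certify $\norm{d(y)}\le 1$ through Lemma \ref{Le_aux_relationship_c_dist_PEJcc} and the term $\sigma_{x,Q}/(2M_{x,c}^2)$ in the definition of $\delta_x$, and then invoke Assumption \ref{Assumption_Q}(4) with $\rho(y)\le L_{x,\rho}$. The bookkeeping with the constants matches Table \ref{Table_Constants} exactly, so there is nothing to add.
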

\begin{proof}
For any $x \in \K$ that satisfies the nondegeneracy condition, it follows from Lemma \ref{Le_aux_0} and the definition of $\Omega_x$ that $\sigma_{\min}(\nabla c(y)\tp Q(y) \nabla c(y)) \geq \frac{\sigma_{x, Q}}{2}$ holds for all $y \in \Omega_x$. 
Then it holds that 
\begin{equation*}
\norm{d(y)} \leq \norm{\nabla c(y)}\norm{(\nabla c(y)\tp Q(y) \nabla c(y)+ \tau(\norm{c(y)})I_p)^{-1}}
\norm{c(y)} \leq \frac{2M_{x,c}}{\sigma_{x,Q}}\norm{c(y)},
\end{equation*}
where the last inequality uses $\sigma_{\min}(\nabla c(y)\tp Q(y) \nabla c(y)) \geq \frac{\sigma_{x, Q}}{2}$, and $d(y)$ is defined in \eqref{Eq_AQ_general}. 
Moreover, by Lemma~\ref{Le_aux_relationship_c_dist_PEJcc}, 
$\norm{c(y)}\leq M_{x,c} {\rm dist}(y,\M) \leq M_{x,c}\delta_x$, and hence
\begin{equation*}
    \norm{d(y)} \leq  \frac{2M_{x,c}}{\sigma_{x,Q}}M_{x,c}\delta_x \leq 1.
\end{equation*}
Now together with Assumption \ref{Assumption_Q}(4), we have 
    \begin{equation*}
    \norm{\Pi_{\X}(\A(y)) - \A(y)}
            = \mathrm{dist}(y - Q(y) d(y) ,{\cal X}) 
            \leq \rho(y)\norm{d(y)}^2  
            \leq  \frac{4L_{x, \rho}M_{x, c}^2}{\sigma_{x, Q}^2} \norm{c(y)}^2. 
    \end{equation*}
    Here the first inequality follows from the feasibility of $y\in \X$ and Assumption \ref{Assumption_Q}(4). This completes the proof. 
\end{proof}

The following lemma demonstrates that for any $x \in \K$ and any $y \in \Omega_x$, the mapping $\A(y)$ reduces the feasibility violation with respect to $\M$ quadratically. 
\begin{lem}
    \label{lem:step2}
    Suppose Assumption \ref{Assumption_X} and Assumption \ref{Assumption_Q} hold. Then for any $x \in \K$ that satisfies the nondegeneracy condition, it holds for any $y \in \Omegax{x}$ that
    \begin{equation}
        \norm{c(\A(y))} \leq \left(\frac{2 L_{\tau}\sigma_{x, Q}  + 2L_{x, c} M_{x, Q}^2 M_{x, c}^2}{ \sigma_{x, Q}^2} \right) \norm{c(y)}^2. 
    \end{equation}
\end{lem}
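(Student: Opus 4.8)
The plan is to Taylor-expand $c$ around $y$ along the step $\A(y) - y = -Q(y)d(y)$, and use the defining equation of $d(y)$ to cancel the first-order term up to a controllable residual coming from the regularization $\tau(\norm{c(y)})I_p$. Writing $s := \A(y) - y = -Q(y)d(y)$, the fundamental theorem of calculus gives $c(\A(y)) = c(y) + \nabla c(y)\tp s + R$, where the remainder satisfies $\norm{R} \le \frac{1}{2} L_{x,c}\norm{s}^2$ by local Lipschitz continuity of $\nabla c$ on $\Thetax$ (this uses that both $y$ and $\A(y)$ lie in $\Thetax$, which follows from $y \in \Omega_x$ together with the bound $\norm{\A(y)-y} \le \frac{2M_{x,Q}M_{x,c}}{\sigma_{x,Q}}\norm{c(y)}$ from Lemma~\ref{Le_aux_1} and the definition of $\delta_x$).

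Next I would substitute the explicit form of $d(y)$ from \eqref{Eq_AQ_general}. Denoting $G := \nabla c(y)\tp Q(y)\nabla c(y)$ and $\beta := \tau(\norm{c(y)})$, we have $\nabla c(y)\tp s = -\nabla c(y)\tp Q(y)\nabla c(y)(G+\beta I_p)^{-1}c(y) = -G(G+\beta I_p)^{-1}c(y)$. Hence $c(y) + \nabla c(y)\tp s = \big(I_p - G(G+\beta I_p)^{-1}\big)c(y) = \beta (G+\beta I_p)^{-1}c(y)$. Since $G \succeq 0$ and, by Lemma~\ref{Le_aux_0} and the definition of $\Omega_x$, $\sigma_{\min}(G) \ge \tfrac12\sigma_{x,Q}$ on $\Omega_x$, we get $\norm{(G+\beta I_p)^{-1}} \le 2/\sigma_{x,Q}$, so this first-order-residual term is bounded by $\frac{2\beta}{\sigma_{x,Q}}\norm{c(y)}$. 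Then I would bound $\beta = \tau(\norm{c(y)})$: since $\norm{c(y)}\le 1$ on $\Omega_x$ (from Lemma~\ref{Le_aux_relationship_c_dist_PEJcc}) and $\tau$ is increasing with $\tau(1)\le L_\tau$ and $\tau(0)=0$, a convexity/monotonicity-type estimate gives $\tau(\norm{c(y)}) \le L_\tau \norm{c(y)}$ when $\norm{c(y)}\le 1$ — more carefully, using only monotonicity one still needs $\tau(t)\le L_\tau t$ for $t\in[0,1]$, which I expect the paper intends via the Lipschitz/concavity hypothesis on $\tau$; in any case this yields $\norm{\beta(G+\beta I_p)^{-1}c(y)} \le \frac{2L_\tau}{\sigma_{x,Q}}\norm{c(y)}^2$, matching the first term of the claimed bound.

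For the remainder term, I would use $\norm{s} = \norm{Q(y)d(y)} \le M_{x,Q}\norm{d(y)}$, and from the proof of Lemma~\ref{lemQ} we already have $\norm{d(y)} \le \frac{2M_{x,c}}{\sigma_{x,Q}}\norm{c(y)}$, so $\norm{s}^2 \le \frac{4M_{x,Q}^2 M_{x,c}^2}{\sigma_{x,Q}^2}\norm{c(y)}^2$, whence $\norm{R} \le \frac{2L_{x,c}M_{x,Q}^2 M_{x,c}^2}{\sigma_{x,Q}^2}\norm{c(y)}^2$, matching the second term. Adding the two contributions gives exactly $\norm{c(\A(y))} \le \big(\frac{2L_\tau\sigma_{x,Q} + 2L_{x,c}M_{x,Q}^2 M_{x,c}^2}{\sigma_{x,Q}^2}\big)\norm{c(y)}^2$.

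The main obstacle I anticipate is the bookkeeping needed to ensure every intermediate point stays inside $\Thetax$ so that the constants $M_{x,c}$, $L_{x,c}$, $M_{x,Q}$ and the lower bound $\sigma_{\min}(G)\ge\tfrac12\sigma_{x,Q}$ are all legitimately applicable — this is precisely what the baroque definition of $\delta_x$ in Table~\ref{Table_Constants} is engineered to guarantee, so I would cite the relevant clauses rather than re-derive them. A secondary subtlety is the estimate $\tau(\norm{c(y)}) \le L_\tau\norm{c(y)}$ for arguments in $[0,1]$; this is the only place the structural assumptions on $\tau$ (increasing, $\tau(0)=0$, $\tau(1)\le L_\tau$) are really used, and it needs the (mild) additional fact that $t\mapsto \tau(t)/t$ is nonincreasing on $(0,1]$ or an equivalent concavity-type property, which I would expect to be part of the standing hypotheses on $\tau$.
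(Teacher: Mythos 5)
Your proposal is correct and follows essentially the same route as the paper's proof: Taylor expansion of $c$ at $y$, cancellation of the first-order term via the identity $c(y)-\nabla c(y)\tp Q(y)d(y)=\tau(\norm{c(y)})\big(\nabla c(y)\tp Q(y)\nabla c(y)+\tau(\norm{c(y)})I_p\big)^{-1}c(y)$, and the bounds $\norm{(G+\beta I_p)^{-1}}\leq 2/\sigma_{x,Q}$ and $\norm{Q(y)d(y)}\leq \frac{2M_{x,Q}M_{x,c}}{\sigma_{x,Q}}\norm{c(y)}$. The subtlety you flag about needing $\tau(t)\leq L_\tau t$ on $[0,1]$ (rather than just $\tau(1)\leq L_\tau$ and monotonicity) is real, but the paper uses the same estimate without further comment, so it is a gap in the stated hypotheses on $\tau$ rather than in your argument.
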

\begin{proof}
For any $x \in \K$ that satisfies the nondegeneracy condition, from Lemma \ref{Le_aux_1} and the definitions of $\delta_x$ and $\gamma_x$, we can conclude that 
\begin{equation}
    \begin{aligned}
        &\norm{\A(y) - x} \leq \norm{\A(y)-y} + \delta_x \leq \frac{2M_{x, Q}M_{x, c}}{\sigma_{x, Q}} \norm{c(y)} + \delta_x\\
        \leq{}& \frac{2M_{x, Q}M_{x, c}^2}{\sigma_{x, Q}} \norm{y-x} + \delta_x \leq \left( \frac{\sigma_{x, Q}+ 2M_{x, Q}M_{x, c}^2}{\sigma_{x, Q}} \right) \delta_x \leq \gamma_x. 
    \end{aligned}
\end{equation}
Note that the third inequality uses Lemma \ref{Le_aux_relationship_c_dist_PEJcc}. 
This illustrates that $\A(y)\in\Thetax$. 
Moreover, from the Lipschitz continuity of $c$ over $\Thetax$, it holds that 
    \begin{equation}
        \begin{aligned}
            &\norm{c(\A(y))} \leq  \norm{c(y) + \nabla c(y)\tp (\A(y) - y)} + \frac{L_{x,c}}{2} \norm{\A(y) - y}^2\\
            \leq{}& \norm{c(y) - \nabla c(y)\tp Q(y)  d(y)} 
            + \frac{L_{x,c}}{2} \norm{Q(y)d(y)}^2 \\
            \leq{}& \tau(\norm{c(y)}) \norm{ \big(\nabla c(y)\tp Q(y) \nabla c(y) + \tau(\norm{c(y)}) \big)^{-1} c(y)}  + \frac{L_{x, c}}{2} \norm{Q(y)d(y)}^2\\
            \leq{}& \frac{2L_{\tau}}{\sigma_{x, Q}} \norm{c(y)}^2 + \frac{2L_{x, c} M_{x, Q}^2 M_{x, c}^2}{ \sigma_{x, Q}^2}  \norm{c(y)}^2.
        \end{aligned}
    \end{equation}
    Here the first inequality follows from the mean value theorem, the second inequality uses the expression of $\A$ in \eqref{Eq_AQ_general}, and the last inequality follows from the definition of $\gamma_x$. This completes the proof.  
\end{proof}

Next, the following lemma demonstrates that for any $x \in \K$ and any $y \in \Omega_x$, the mapping $\Pi_{\X}(\A(\cdot))$ reduces the feasibility violation with respect to $\M$ quadratically. 
\begin{lem}
    \label{Le_quadratic_decrease}
    Suppose Assumption \ref{Assumption_X} and Assumption \ref{Assumption_Q} hold. Then for any $x \in \K$ that satisfies the nondegeneracy condition, it holds for any $y \in \Omegax{x}$ that
    \begin{equation}
        \norm{c(\Pi_{\X}(\A(y)))} \leq \left(\frac{4L_{x, \rho}M_{x, c}^3 + 2 L_{\tau}\sigma_{x, Q} + 2L_{x, c} M_{x, Q}^2 M_{x, c}^2}{\sigma_{x, Q}^2} \right) \norm{c(y)}^2.
    \end{equation}
\end{lem}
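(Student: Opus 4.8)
The plan is to derive the bound from the two immediately preceding lemmas together with a single Lipschitz estimate. Write $w := \Pi_{\X}(\A(y))$ and split $\norm{c(w)} \leq \norm{c(w) - c(\A(y))} + \norm{c(\A(y))}$. The second term is already controlled by Lemma~\ref{lem:step2}, which gives $\norm{c(\A(y))} \leq C_1 \norm{c(y)}^2$ with $C_1 = (2 L_{\tau}\sigma_{x, Q} + 2L_{x, c} M_{x, Q}^2 M_{x, c}^2)/\sigma_{x, Q}^2$. For the first term, the key estimate $\norm{w - \A(y)} \leq C_2 \norm{c(y)}^2$ with $C_2 = 4L_{x, \rho}M_{x, c}^2/\sigma_{x, Q}^2$ is exactly Lemma~\ref{lemQ}. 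So it remains to pass from $\norm{w - \A(y)}$ to $\norm{c(w) - c(\A(y))}$ by a Lipschitz bound with constant $M_{x,c}$, after which the claim follows since $C_1 + M_{x,c} C_2 = (4L_{x, \rho}M_{x, c}^3 + 2 L_{\tau}\sigma_{x, Q} + 2L_{x, c} M_{x, Q}^2 M_{x, c}^2)/\sigma_{x, Q}^2$, precisely the constant in the statement.

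To justify $\norm{c(w) - c(\A(y))} \leq M_{x,c}\norm{w - \A(y)}$, I would show that both $\A(y)$ and $w$ lie in the ball $\Thetax = \bb{B}(x;\gamma_x)$, over which $\norm{\nabla c(\cdot)} \leq M_{x,c}$ by the definition of $M_{x,c}$ in Table~\ref{Table_Constants}; since $\Thetax$ is convex, the mean-value inequality along the segment joining $\A(y)$ and $w$ then yields the bound. That $\A(y) \in \Thetax$ was already established inside the proof of Lemma~\ref{lem:step2}, where in fact $\norm{\A(y) - x} \leq \big((\sigma_{x, Q}+ 2M_{x, Q}M_{x, c}^2)/\sigma_{x, Q}\big)\delta_x$. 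For $w \in \Thetax$ I would estimate $\norm{w - x} \leq \norm{w - \A(y)} + \norm{\A(y) - x}$, bounding the first summand via Lemma~\ref{lemQ} together with $\norm{c(y)} \leq M_{x,c}\delta_x \leq 1$ (Lemma~\ref{Le_aux_relationship_c_dist_PEJcc}), and then checking that the upper bounds imposed on $\delta_x$ in Table~\ref{Table_Constants} --- in particular the ones involving $\gamma_x$ and $L_{x,\rho}$ --- force the resulting sum to remain below $\gamma_x$.

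Putting the three pieces together gives $\norm{c(w)} \leq (M_{x,c}C_2 + C_1)\norm{c(y)}^2$, which is the assertion. I expect the genuine work to be concentrated in the second paragraph: everything else is a direct quotation of Lemmas~\ref{lem:step2} and~\ref{lemQ} plus a triangle inequality, whereas verifying $w = \Pi_{\X}(\A(y)) \in \Thetax$ is the point at which the somewhat intricate list of constraints defining $\delta_x$ is actually consumed, and one has to be slightly careful to chain the estimates in the right order so as not to lose a constant.
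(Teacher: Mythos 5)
Your proposal follows exactly the paper's own argument: triangle inequality splitting off $\norm{c(\A(y))}$, the Lipschitz bound $\norm{c(w)-c(\A(y))} \leq M_{x,c}\norm{w-\A(y)}$, and then Lemmas~\ref{lemQ} and~\ref{lem:step2}, yielding the same constant. Your second paragraph verifying $w = \Pi_{\X}(\A(y)) \in \Thetax$ is a point the paper's proof leaves implicit (it only cites ``Lipschitz continuity of $c$ over $\Thetax$''), so your version is, if anything, slightly more complete.
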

\begin{proof}
   For any $x \in \K$ that satisfies the nondegeneracy condition and any $y \in \Omegax{x}$, we have that
\begin{equation}
    \begin{aligned}
        &\norm{c(\Pi_{\X}(\A(y)))} \leq \norm{c(\Pi_{\X}(\A(y))) - c(\A(y))} + \norm{c(\A(y))}\\[5pt]
        \leq{}& M_{x, c} \norm{\Pi_{\X}(\A(y)) - \A(y)} + \norm{c(\A(y))}\\
        \leq{}& \frac{4L_{x, \rho}M_{x, c}^3}{\sigma_{x, Q}^2} \norm{c(y)}^2 + \left(\frac{2 L_{\tau}\sigma_{x, Q}  + 2L_{x, c} M_{x, Q}^2 M_{x, c}^2}{ \sigma_{x, Q}^2} \right) \norm{c(y)}^2\\
        ={}& \left(\frac{4L_{x, \rho}M_{x, c}^3 + 2 L_{\tau}\sigma_{x, Q} + 2L_{x, c} M_{x, Q}^2 M_{x, c}^2}{\sigma_{x, Q}^2} \right) \norm{c(y)}^2. 
    \end{aligned}
\end{equation}
Here the second inequality follows from the Lipschitz continuity of $c$ over $\Thetax$, and the third inequality uses Lemma \ref{lemQ} and Lemma \ref{lem:step2}. This completes the proof. 
\end{proof}

Furthermore, the following proposition illustrates that for any $x \in \K$ and any $x_0 \in \Xi_x\subseteq\Omega_x$, the iterates $\{\xk\}$ generated by \eqref{Eq_Feasibility_RestorationQ} are restricted within $\Omega_x$.   
\begin{prop}
    \label{prop:supconv}
    Suppose Assumption \ref{Assumption_X} and Assumption \ref{Assumption_Q} hold. Then for any $x \in \K$ that satisfies the nondegeneracy condition, and any sequence $\{\xk\}$ generated by \eqref{Eq_Feasibility_RestorationQ} with $x_0\in\Xi_x$, it holds that 
    \begin{equation}
        \sum_{i = 0}^k \norm{x_{i+1} - x_i} \leq  \frac{8 M_{x, Q} M_{x, c}^2}{ \sigma_{x, Q}} \mathrm{dist}(x_0, \M).
    \end{equation}
    Additionally, it holds that $\{\xk\} \subseteq \Omega_x$. 
\end{prop}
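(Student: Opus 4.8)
The plan is to prove, by induction on $k$, the joint statement that $x_0,\dots,x_k\in\Omega_x$ and $\sum_{i=0}^{k-1}\norm{x_{i+1}-x_i}\le\frac{8M_{x,Q}M_{x,c}^2}{\sigma_{x,Q}}\dist(x_0,\M)$; the two conclusions of the proposition then follow at once. The inductive step is driven by two one-step estimates, each valid for every $y\in\Omega_x$ and every $w\in\Pi_{\X}(\A(y)+r)$ with $\norm{r}\le M_{x,\xi}\norm{c(y)}^2$ (note $\A(y)$ is well defined here by Lemma~\ref{Le_aux_0} and the definition of $\gamma_x$, using $\Omega_x\subseteq\bb{B}(x;\gamma_x)$). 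The first is a \emph{displacement bound} $\norm{w-y}\le\frac{4M_{x,Q}M_{x,c}}{\sigma_{x,Q}}\norm{c(y)}$: I would start from $\norm{w-y}\le\dist(\A(y),\X)+2\norm{r}+\norm{\A(y)-y}$ (a consequence of the triangle inequalities for the norm and for $\dist(\cdot,\X)$, since $w\in\Pi_{\X}(\A(y)+r)$), bound the three summands via Lemma~\ref{lemQ}, \eqref{Eq_Cond_rk} and Lemma~\ref{Le_aux_1} respectively, and then trade the two $\norm{c(y)}^2$-order contributions for $\norm{c(y)}$-order ones using $\norm{c(y)}\le M_{x,c}\dist(y,\M)\le M_{x,c}\delta_x$ (Lemma~\ref{Le_aux_relationship_c_dist_PEJcc}) together with the bound $\delta_x\le\frac{M_{x,Q}\sigma_{x,Q}}{4L_{x,\rho}M_{x,c}^2+M_{x,\xi}\sigma_{x,Q}^2}$ built into the definition of $\delta_x$ in Table~\ref{Table_Constants}. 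The second is a \emph{quadratic contraction} $\norm{c(w)}\le\frac12\norm{c(y)}$: from $\norm{c(w)}\le M_{x,c}\norm{w-\Pi_{\X}(\A(y))}+\norm{c(\Pi_{\X}(\A(y)))}$ one gets $\norm{c(w)}\le C_2\norm{c(y)}^2$ via Lemmas~\ref{Le_aux_2},~\ref{lemQ},~\ref{Le_quadratic_decrease} and \eqref{Eq_Cond_rk}, where $C_2$ is the constant for which the last term in the minimum defining $\delta_x$ equals exactly $\frac{1}{2M_{x,c}C_2}$; hence $C_2M_{x,c}\delta_x\le\frac12$, and $\norm{c(y)}\le M_{x,c}\delta_x$ then closes the contraction.

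Granting these, the induction proceeds as follows. The base case $k=0$ holds since $x_0\in\Xi_x\subseteq\Omega_x$. For the inductive step, assume $x_0,\dots,x_k\in\Omega_x$; applying the contraction to $y=x_0,\dots,x_k$ gives $\norm{c(x_j)}\le 2^{-j}\norm{c(x_0)}$ for $j=0,\dots,k+1$, and then the displacement bound applied to $y=x_0,\dots,x_k$ yields
\[
\sum_{j=0}^{k}\norm{x_{j+1}-x_j}\le\frac{4M_{x,Q}M_{x,c}}{\sigma_{x,Q}}\sum_{j=0}^{k}\norm{c(x_j)}\le\frac{8M_{x,Q}M_{x,c}}{\sigma_{x,Q}}\norm{c(x_0)}\le\frac{8M_{x,Q}M_{x,c}^2}{\sigma_{x,Q}}\dist(x_0,\M),
\]
the last step using $\norm{c(x_0)}\le M_{x,c}\dist(x_0,\M)$ from Lemma~\ref{Le_aux_relationship_c_dist_PEJcc}; this is exactly the partial-sum bound at level $k+1$ (and, letting $k\to\infty$, the assertion of the proposition). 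Finally, since $x_0\in\Xi_x$ gives $\dist(x_0,\M)\le\norm{x_0-x}\le\delta_x'$, the triangle inequality yields $\norm{x_{k+1}-x}\le\delta_x'+\frac{8M_{x,Q}M_{x,c}^2}{\sigma_{x,Q}}\delta_x'=\frac{\sigma_{x,Q}+8M_{x,Q}M_{x,c}^2}{32M_{x,Q}M_{x,c}^2+2\sigma_{x,Q}}\,\delta_x\le\frac12\delta_x$, by the choice of $\delta_x'$; together with $x_{k+1}\in\X$ this gives $x_{k+1}\in\Omega_x$ and closes the induction.

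The main obstacle is constant bookkeeping rather than anything conceptual: I must verify that the constants produced by chaining Lemmas~\ref{Le_aux_1},~\ref{Le_aux_2},~\ref{lemQ},~\ref{Le_quadratic_decrease} and the error bound \eqref{Eq_Cond_rk} are genuinely dominated by the matching terms in the (rather intricate) definition of $\delta_x$, so that every $O(\norm{c(\cdot)}^2)$ quantity can be traded for an $O(\norm{c(\cdot)})$ one at a cost that keeps the displacement constant at exactly $\frac{4M_{x,Q}M_{x,c}}{\sigma_{x,Q}}$ and the contraction ratio at $\frac12$. A related subtlety is the logical ordering: the one-step estimates at index $j$ are available only once $x_j$ is already known to lie in $\Omega_x$, so ``stays in $\Omega_x$'' and the summation bound must be carried through the induction simultaneously rather than one after the other, and the shrinkage factor $\frac{\sigma_{x,Q}}{32M_{x,Q}M_{x,c}^2+2\sigma_{x,Q}}$ relating $\Xi_x$ to $\Omega_x$ is precisely the slack that makes this self-consistent.
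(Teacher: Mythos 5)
Your proposal is correct and follows essentially the same route as the paper's proof: an induction in which the quadratic contraction $\norm{c(x_{j+1})}\leq R_x\norm{c(x_j)}^2\leq\frac12\norm{c(x_j)}$ yields geometric decay, the per-step displacement bound $\frac{4M_{x,Q}M_{x,c}}{\sigma_{x,Q}}\norm{c(x_j)}$ sums to the stated constant, and the $\Xi_x$-to-$\Omega_x$ shrinkage closes the induction. The only (harmless) difference is that you derive the displacement bound directly from the $1$-Lipschitzness of $\dist(\cdot,\X)$, giving $\dist(\A(y),\X)+2\norm{r}$ in place of the paper's $2\dist(\A(y),\X)+2\norm{r}$ from Lemma \ref{Le_aux_2}, which leaves you slightly more slack in the constant bookkeeping.
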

\begin{proof}
We prove this proposition by induction, that is, we assume that $\{x_i: 0\leq i\leq k\} \subset \Omega_x$. 
Recall that $x_{k+1}\in\Pi_\X(\A(\xk)+r_k)$.
Then from Lemma \ref{Le_quadratic_decrease}, for any $\wk \in \Pi_{\X}(\A(\xk))$, we have 
\begin{equation}
\label{eq-tmp1}
    \begin{aligned}
        &\norm{c(\xkp)} =\norm{c(\xkp)-c(\wk)+c(\wk)}
        \;\leq\; M_{x, c}\norm{\xkp - \wk} + \norm{c(\wk)}
        \\[5pt]
        \leq{}& 2M_{x, c}\mathrm{dist}(\A(\xk), \X) + 2M_{x, c}\norm{r_k} + \norm{c(\wk)}\\
        \leq{}& \frac{8L_{x, \rho} M_{x, c}^3}{\sigma_{x,Q}^2} \norm{c(\xk)}^2 + 2M_{x, c}M_{x, \xi}\norm{c(\xk)}^2 
        %\\
        %&
        + \left(\frac{4L_{x, \rho}M_{x, c}^3 + 2 L_{\tau}\sigma_{x, Q} + 2L_{x, c} M_{x, Q}^2 M_{x, c}^2}{\sigma_{x, Q}^2} \right) \norm{c(\xk)}^2
        \\
        ={}& R_x \norm{c(\xk)}^2 
        \leq \frac{1}{2} \norm{c(\xk)},
    \end{aligned}
\end{equation}
    where $$R_x = \frac{12L_{x, \rho}M_{x, c}^3 + 2 L_{\tau}\sigma_{x, Q} + 2L_{x, c} M_{x, Q}^2 M_{x, c}^2 + 2\sigma_{x, Q}^2  M_{x,c} M_{x, \xi}}{\sigma_{x, Q}^2}.$$
In \eqref{eq-tmp1}, the first inequality uses the Lipschitz continuity of $c$ over $\Thetax$, the second inequality uses Lemma \ref{Le_aux_2} with $z = \A(\xk)$ and $r = r_k$, and the third inequality uses Lemma \ref{lemQ}, \eqref{Eq_Cond_rk}, and Lemma \ref{Le_quadratic_decrease}. Additionally, the last inequality follows from the choice of $\delta_x$. 

It follows from \eqref{eq-tmp1}  that for any $i \leq k$, we have 
\begin{equation*}
    \norm{c(x_{i})} \leq \frac{1}{2^i} \norm{c(x_0)}. 
\end{equation*}
Furthermore, from the update scheme of $\{\xk\}$ in \eqref{Eq_Feasibility_RestorationQ}, it holds for any $i \leq k$ that  
\begin{equation}
    \begin{aligned}
        &\norm{x_{i+1} - x_i} \leq  \norm{\Pi_{\X}(\A(x_i)+ r_i) - \A(x_i) } + \norm{\A(x_i) - x_i}
        \\[5pt]
        \leq{}& 2\mathrm{dist}(\A(x_i), \X) + 2\norm{r_i} + \norm{\A(x_i) - x_i}\\
        \leq{}& \frac{8L_{x, \rho}M_{x, c}^2}{\sigma_{x, Q}^2} \norm{c(x_i)}^2 + 2M_{x, \xi} \norm{c(\xk)}^2 +  
        \frac{2 M_{x, Q} M_{x, c}}{ \sigma_{x, Q}} \norm{c(x_i)} \leq \frac{4 M_{x, Q} M_{x, c}}{ \sigma_{x, Q}} \norm{c(x_i)}.
    \end{aligned}
\end{equation}
Note that in deriving the second inequality, we use 
Lemma \ref{lemQ}, \eqref{Eq_Cond_rk}, and Lemma \ref{Le_aux_1}. 
Therefore, we have 
\begin{equation*}
    \begin{aligned}
        &\norm{\xkp - x_0} \leq \sum_{i = 0}^k \norm{x_{i+1} - x_i} \leq \frac{4 M_{x, Q} M_{x, c}}{ \sigma_{x, Q}} \sum_{i = 0}^k \norm{c(x_i)} \leq \frac{4 M_{x, Q} M_{x, c}}{ \sigma_{x, Q}} \norm{c(x_0)}\sum_{i = 0}^k \frac{1}{2^i} \\
        \leq{}& \frac{8 M_{x, Q} M_{x, c}}{ \sigma_{x, Q}} \norm{c(x_0)} \leq \frac{8 M_{x, Q} M_{x, c}^2}{ \sigma_{x, Q}} \mathrm{dist}(x_0, \M) \leq \frac{1}{2}\delta_x. 
    \end{aligned}
\end{equation*}
Together with the fact that $x_0 \in \Xi_x \subseteq \Omega_x$, we have $\norm{\xkp - x} \leq \norm{\xkp-x_0} + \norm{x_0 - x} \leq \delta_x$, which further illustrates that $\xkp \in \Omega_x$. Then we can conclude from the induction that $\{\xk\}$ lies within $\Omega_x$. This completes the proof.
\end{proof}

Finally, we present the following theorem illustrating the local quadratic convergence rate of \eqref{Eq_Feasibility_RestorationQ}. 
\begin{theo}
    \label{Theo_quadconv}
    Suppose Assumption \ref{Assumption_X} and Assumption \ref{Assumption_Q} hold. Then for any $x \in \K$ that satisfies the nondegeneracy condition, and any sequence $\{\xk\}$ generated by \eqref{Eq_Feasibility_RestorationQ} with $x_0\in\Xi_x$, the iterates $\{\xk\}$ converges to $\K \cap \Omega_x$ quadratically. 
\end{theo}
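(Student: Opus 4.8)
The plan is to obtain the local quadratic convergence almost directly from Proposition~\ref{prop:supconv} together with the one-step residual estimate established in its proof, and then to upgrade the decay of the feasibility residual $\norm{c(\xk)}$ to genuine convergence of the iterates $\{\xk\}$.

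\emph{Step 1 (the residual contracts quadratically).} Fix $x\in\K$ satisfying the nondegeneracy condition and $x_0\in\Xi_x$. Proposition~\ref{prop:supconv} already gives $\{\xk\}\subseteq\Omega_x$, and inequality~\eqref{eq-tmp1} in its proof gives, with $R_x$ as defined there,
\begin{equation*}
\norm{c(\xkp)}\;\leq\;R_x\norm{c(\xk)}^2\;\leq\;\tfrac12\norm{c(\xk)},\qquad k\geq 0 .
\end{equation*}
Iterating the second inequality yields $\norm{c(x_i)}\leq 2^{-(i-k)}\norm{c(\xk)}$ for every $i\geq k$; in particular $\norm{c(\xk)}\to 0$.

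\emph{Step 2 (the iterates converge to a point of $\K\cap\Omega_x$).} From the step-size bound $\norm{x_{i+1}-x_i}\leq\frac{4M_{x,Q}M_{x,c}}{\sigma_{x,Q}}\norm{c(x_i)}$, also obtained in the proof of Proposition~\ref{prop:supconv}, together with the geometric decay of Step~1, the series $\sum_i\norm{x_{i+1}-x_i}$ converges, so $\{\xk\}$ is Cauchy with some limit $x^\star$; summing the tail gives, for every $k$,
\begin{equation*}
\norm{\xk-x^\star}\;\leq\;\sum_{i\geq k}\norm{x_{i+1}-x_i}\;\leq\;\frac{8M_{x,Q}M_{x,c}}{\sigma_{x,Q}}\norm{c(\xk)} .
\end{equation*}
Closedness of $\X$ forces $x^\star\in\X$; continuity of $c$ together with $\norm{c(\xk)}\to 0$ forces $c(x^\star)=0$, i.e. $x^\star\in\M$; and closedness of $\bb{B}(x;\delta_x)$ forces $x^\star\in\Omega_x$. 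Hence $x^\star\in\K\cap\Omega_x$.

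\emph{Step 3 (quadratic rate) and the main obstacle.} Applying the bound of Step~2 at index $k+1$, then the residual recursion of Step~1, and finally the Lipschitz estimate $\norm{c(\xk)}=\norm{c(\xk)-c(x^\star)}\leq M_{x,c}\norm{\xk-x^\star}$ — valid because $\xk,x^\star\in\Omega_x\subseteq\Thetax$, $\Thetax$ is convex, and $\norm{\nabla c}\leq M_{x,c}$ on $\Thetax$ — I obtain
\begin{equation*}
\norm{\xkp-x^\star}\;\leq\;\frac{8M_{x,Q}M_{x,c}R_x}{\sigma_{x,Q}}\norm{c(\xk)}^2\;\leq\;\frac{8M_{x,Q}M_{x,c}^3R_x}{\sigma_{x,Q}}\norm{\xk-x^\star}^2 ,
\end{equation*}
which is the asserted quadratic convergence to $\K\cap\Omega_x$; invoking Lemma~\ref{Le_aux_relationship_c_dist_PEJcc} and $\K\subseteq\M$ one equivalently gets $\mathrm{dist}(\xkp,\K)\leq C\,\mathrm{dist}(\xk,\K)^2$ for an explicit constant $C$. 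I do not expect a serious analytic obstacle: once Proposition~\ref{prop:supconv} is in hand the remaining work is essentially bookkeeping — converting the $\tfrac12$-contraction of $\norm{c(\cdot)}$ into a summable telescoping bound for $\norm{\xk-x^\star}$, and verifying that the limit lies in the prescribed neighborhood $\Omega_x$ rather than merely in $\K$ — both dispatched by closedness of $\X$ and $\bb{B}(x;\delta_x)$, continuity of $c$, and the two-sided comparison between $\norm{c(\cdot)}$ and $\norm{\cdot-x^\star}$ furnished by the upper Lipschitz bound on $c$.
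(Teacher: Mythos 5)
Your proposal is correct, and the core of it (the quadratic contraction $\norm{c(\xkp)}\leq R_x\norm{c(\xk)}^2$ from \eqref{eq-tmp1} plus the containment $\{\xk\}\subseteq\Omega_x$ from Proposition~\ref{prop:supconv}) is exactly what the paper uses. Where you genuinely diverge is in the final step that converts decay of the residual into quadratic convergence ``to $\K$''. The paper does not construct the limit point at all: it passes from $\norm{c(\xkp)}\leq R_x\norm{c(\xk)}^2$ to $\mathrm{dist}(\xkp,\M)\leq C\,\mathrm{dist}(\xk,\M)^2$ via Lemma~\ref{Le_aux_relationship_c_dist_PEJcc}, and then invokes transversality of $\X$ and $\M$ at $x$ (via \cite[Lemma 2.2]{xiao2025ongoing} and \cite{drusvyatskiy2015transversality}) to obtain a local error bound $\mathrm{dist}(y,\K)\leq M_{x,T}\,\mathrm{dist}(y,\M)$ on $\bb{B}(x;\hat\delta_x)\cap\X$, which upgrades the estimate to $\mathrm{dist}(\xkp,\K)\leq C'\,\mathrm{dist}(\xk,\K)^2$. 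You instead identify the actual limit $x^\star$, prove the tail bound $\norm{\xk-x^\star}\leq\frac{8M_{x,Q}M_{x,c}}{\sigma_{x,Q}}\norm{c(\xk)}$ by summing the step-length estimates, and close the loop with the Lipschitz comparison $\norm{c(\xk)}\leq M_{x,c}\norm{\xk-x^\star}$. This is entirely self-contained --- it avoids importing the transversality machinery --- and it delivers a slightly stronger conclusion, namely Q-quadratic convergence of the iterates to a single point $x^\star\in\K\cap\Omega_x$. Your closing remark that one ``equivalently'' recovers $\mathrm{dist}(\xkp,\K)\leq C\,\mathrm{dist}(\xk,\K)^2$ from Lemma~\ref{Le_aux_relationship_c_dist_PEJcc} and $\K\subseteq\M$ is true, but only because the bound routes through $x^\star$ (one uses $\mathrm{dist}(\xkp,\K)\leq\norm{\xkp-x^\star}$ together with $\norm{c(\xk)}\leq M_{x,c}\,\mathrm{dist}(\xk,\M)\leq M_{x,c}\,\mathrm{dist}(\xk,\K)$); without the limit point in hand this implication would require exactly the error bound the paper derives from transversality, so it is worth making that dependence explicit. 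The paper's route, in exchange for the extra machinery, yields an error bound valid for all nearby $y\in\X$ rather than only along the generated sequence.
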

\begin{proof}
For any $x \in \K$ that satisfies the nondegeneracy condition, and any sequence $\{\xk\}$ generated by \eqref{Eq_Feasibility_RestorationQ} with $x_0\in\Xi_x$, Proposition \ref{prop:supconv} illustrates that $\{\xk\}$ is a Cauchy sequence and $\{\xk\} \subseteq \Omega_x$. 

Moreover, by using \eqref{eq-tmp1} in Lemma \ref{Le_quadratic_decrease}, we have 
    \begin{equation}
        \begin{aligned}
            \norm{c(\xkp)} \leq R_x\norm{c(\xk)}^2.
        \end{aligned}
    \end{equation}
    Then we can conclude from Lemma \ref{Le_aux_relationship_c_dist_PEJcc} that 
    \begin{equation}
        \label{Eq_Theo_quadconv_0}
        \mathrm{dist}(\xkp, \M) \leq 
        \frac{2M_{x,c}^2}{\sigma_{x, c}}\, R_x\,\mathrm{dist}(\xk, \M)^2. 
    \end{equation}
    
    Furthermore, for any $x \in \K$ that satisfies the nondegeneracy condition, it directly follows from \cite[Lemma 2.2]{xiao2025ongoing} that 
    \begin{equation}
        \{0\}\subseteq \ca{N}_{\M}(x) \cap \NX(x) \subseteq \ca{N}_{\M}(x) \cap \mathrm{range}(\NX(x)) = \{0\}.
    \end{equation}
   As demonstrated in \cite[Page 2]{drusvyatskiy2015transversality}, $\X$ and $\M$ intersects transversally at $x$, hence there exists $\hat{\delta}_{x} > 0$ and ${M}_{x, T}> 0$ 
    such that 
    \begin{equation}
        \mathrm{dist}(y, \K) \leq M_{x, T}\mathrm{dist}(y, \M), \quad \forall\; y \in \bb{B}(x;\hat{\delta}_x) \cap \X.
    \end{equation}
    Notice that $\K \subseteq \M$ implies that 
    $\mathrm{dist}(x, \M) \leq \mathrm{dist}(x,\K)$ for any $x \in \Rn$. 
    Together with \eqref{Eq_Theo_quadconv_0}, we can conclude that
    \begin{equation}
        \mathrm{dist}(\xkp, \K) \leq 
        \frac{2M_{x,c}^2M_{x, T}}{\sigma_{x, c}} \, R_x\,\mathrm{dist}(\xk, \K)^2. 
    \end{equation}
    This further demonstrates that $\{\mathrm{dist}(\xk, \K)\}$ converges to $0$ quadratically. This completes the proof. 
\end{proof}

\subsection{Globalization of the alternating projection with adaptive stepsizes}
\label{Subsection_global_alg}
In this subsection, we aim to develop a globally convergent alternating projection method based on the update scheme \eqref{Eq_Feasibility_RestorationQ}. 
Motivated by the equivalent formulation \eqref{Prob_Con}, we consider hybridizing \eqref{Eq_Feasibility_RestorationQ} with the projected gradient descent method for solving \eqref{Prob_Con}. In particular, in $k$-th iteration, we compute the trial point $x_{\mathrm{trial}, k}$ from $\xk$ based on the scheme \eqref{Eq_Feasibility_RestorationQ}, and measure the quality of $x_{\mathrm{trial}, k}$  by $\norm{c(x_{\mathrm{trial}, k})}$. When the quality of $x_{\mathrm{trial}, k}$ is poor (i.e., $\norm{c(x_{\mathrm{trial}, k})} \geq (1-\kappa)\norm{c(\xk)}$ for some prefixed $\kappa > 0$), we switch our algorithm to the projected gradient method for solving \eqref{Prob_Con} at $\xk$ and generate $\xkp$ accordingly. Otherwise, when the quality of $x_{\mathrm{trial}, k}$ is acceptable (i.e., $\norm{c(x_{\mathrm{trial}, k})} < (1-\kappa)\norm{c(\xk)}$, we set $\xkp = x_{\mathrm{trial}, k}$. The detailed algorithm is presented in Algorithm \ref{Alg:Adap_AP2}. 

\begin{algorithm}[htbp]
	\begin{algorithmic}[1]  
		\Require Closed  subset $\X$, mapping $c$, 
        $\kappa\in (0,1)$,
        $\eta_{\max} > 0$, $\alpha\in(0,1)$. 
		\State Set $k=0$;
		\While{not terminated}
            \State Compute $d_k = Q(\xk) \nabla c(\xk) (\nabla c(\xk)\tp Q(\xk) \nabla c(\xk) + \tau(\norm{c(\xk)}) I_p)^{-1} c(\xk)$
            \State Compute $x_{{\rm trial}, k} = \Pi_{\X}(\xk - d_k + r_k)$.
            \If{$\norm{c(x_{{\rm trial},k})} \geq (1-\kappa) \norm{c(\xk)}$}
                \State Find the smallest $j_k$ such that $\frac{1}{2}\norm{c(x_{+,k})}^2 \leq \frac{1}{2} \norm{c(\xk)}^2 - \frac{1}{4\alpha^{j_k}\eta_{\max}} \norm{x_{+,k} - \xk}^2$ with \begin{equation}\label{eq:pgl}x_{+, k} = \Pi_{\X}(\xk - \alpha^{j_k}\eta_{\max} \nabla c(\xk) c(\xk)). \end{equation}
                \State $\xkp = x_{+, k}$, 
                $\eta_k = \alpha^{j_k}\eta_{\max}.$
            \Else
                \State $\xkp = x_{{\rm trial},k}$
            \EndIf
            \State $k = k+1$  
		\EndWhile
		\State Return $\xk$.
	\end{algorithmic}  
	\caption{Adaptive alternating projection with hybrid projected gradient steps.}  
	\label{Alg:Adap_AP2}
\end{algorithm}

Now we prove the global convergence of Algorithm \ref{Alg:Adap_AP2}. We begin our theoretical analysis with the following lemma illustrating the effectiveness of the line-search procedure in Line 6 of Algorithm \ref{Alg:Adap_AP2}.
\begin{lem}
    \label{Le_sufficient_decrease}
    For any $x \in \X$ with $c(x)\neq0$, for any $\eta > 0$ such that 
    \begin{equation}\label{eq:eta}
        \eta\leq \min\left\{\frac{1}{2\norm{\nabla c(x)c(x)}}, \sup_{y, z\in \bb{B}(x;1)} \frac{\norm{\nabla c(y)c(y) - \nabla c(z)c(z)}}{4\norm{y-z}}\right\},
    \end{equation}
    we have 
    \begin{equation}\label{eq:nonin}
        \frac{1}{2} \norm{c(y_+)}^2 \leq \frac{1}{2} \norm{c(x)}^2 - \frac{1}{4\eta} \norm{y_+ - x}^2.
    \end{equation}
    Here $y_+ \in \Pi_{\X}(x - \eta \nabla c(x)c(x))$. %$x_{+} = \Pi_{\X}(x - \eta^2 d - \eta(1-\eta)\nabla c(x) c(x))$. 
\end{lem}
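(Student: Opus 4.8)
The plan is to run the classical projected-gradient sufficient-decrease argument on the objective $f(x) := \tfrac12\norm{c(x)}^2$, taking care that $\X$ is merely closed (not convex), so that only the weak nearest-point inequality for $\Pi_{\X}$ is available. Write $g(x) := \nabla c(x)c(x)$, which equals $\nabla f(x)$. By Assumption~\ref{Assumption_X}(2) both $c$ and $\nabla c$ are locally Lipschitz, hence so is $g$; in particular the quantity $L_x := \sup_{y,z\in\bb{B}(x;1),\,y\neq z}\frac{\norm{g(y)-g(z)}}{\norm{y-z}}$ is finite, and $\bb{B}(x;1)$ is exactly the region over which the second term in \eqref{eq:eta} is measured. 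Fix $x\in\X$ with $c(x)\neq0$, $\eta$ as in \eqref{eq:eta}, and $y_+\in\Pi_{\X}(x-\eta g(x))$.

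First I would extract the two consequences of the projection step that require no regularity of $\X$. Since $x\in\X$, minimality of $y_+$ as a nearest point of $\X$ to $x-\eta g(x)$ gives $\norm{y_+-(x-\eta g(x))}\le\norm{x-(x-\eta g(x))}=\eta\norm{g(x)}$. Squaring $\norm{y_+-x+\eta g(x)}^2\le\eta^2\norm{g(x)}^2$ and cancelling $\eta^2\norm{g(x)}^2$ produces the key estimate $\inner{g(x),\,y_+-x}\le-\tfrac{1}{2\eta}\norm{y_+-x}^2$. Moreover, by the triangle inequality $\norm{y_+-x}\le\norm{y_+-(x-\eta g(x))}+\eta\norm{g(x)}\le2\eta\norm{g(x)}$, so the first term of the minimum in \eqref{eq:eta} (i.e.\ $\eta\le\tfrac{1}{2\norm{g(x)}}$) forces $\norm{y_+-x}\le1$; consequently $y_+$, and the whole segment joining it to $x$, lies in $\bb{B}(x;1)$.

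Next I would apply the descent lemma along that segment: as $\nabla f=g$ is $L_x$-Lipschitz on $\bb{B}(x;1)$, integrating $g$ along $[x,y_+]$ gives $f(y_+)\le f(x)+\inner{g(x),\,y_+-x}+\tfrac{L_x}{2}\norm{y_+-x}^2$. Inserting the key estimate from the previous step,
\begin{equation*}
f(y_+)\le f(x)-\Big(\tfrac{1}{2\eta}-\tfrac{L_x}{2}\Big)\norm{y_+-x}^2.
\end{equation*}
The second term of the minimum in \eqref{eq:eta} controls $\eta L_x$ and yields $\tfrac{L_x}{2}\le\tfrac{1}{4\eta}$, hence $\tfrac{1}{2\eta}-\tfrac{L_x}{2}\ge\tfrac{1}{4\eta}$, which is precisely \eqref{eq:nonin}.

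The argument is short and essentially routine; the one point needing care is that, $\X$ being nonconvex, the familiar obtuse-angle / firm-nonexpansiveness property of the Euclidean projection is unavailable. The substitute used above is that $\norm{\Pi_{\X}(v)-v}\le\norm{w-v}$ for every $w\in\X$ holds for an arbitrary closed set, and this alone already delivers the ``sufficient decrease in direction $-g(x)$'' bound without any assumption on $\X$. The only remaining item is the elementary bookkeeping that confines $y_+$ to $\bb{B}(x;1)$ so that the local Lipschitz modulus $L_x$ is legitimately applicable, which is exactly the purpose of the first entry of the minimum in \eqref{eq:eta}.
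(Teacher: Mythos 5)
Your proof is correct and follows essentially the same route as the paper's: both rest on the nearest-point inequality $\norm{y_+-(x-\eta\,\nabla c(x)c(x))}\le\eta\norm{\nabla c(x)c(x)}$ (valid for any closed $\X$ since $x\in\X$), which after expanding the square is exactly the paper's prox-minimization comparison with $y=x$, combined with the descent lemma on $\bb{B}(x;1)$. One remark: both you and the paper implicitly read the second entry of the minimum in \eqref{eq:eta} as the reciprocal of the Lipschitz modulus of $\nabla c(\cdot)c(\cdot)$ over $\bb{B}(x;1)$ (i.e., of the form $1/(4L_x)$ rather than the literal $L_x/4$ with a $\sup$), which is what the argument actually requires.
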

\begin{proof} For convenience, let $d = \nabla c(x)c(x)$.
    For any $\eta > 0$ such that  
    \begin{equation*}
        \eta\leq \min\left\{\frac{1}{2\norm{\nabla c(x)c(x)}}, \sup_{y, z\in \bb{B}(x;1)} \frac{\norm{\nabla c(y)c(y) - \nabla c(z)c(z)}}{4\norm{y-z}}\right\},
    \end{equation*}
    it follows from optimality of $y_+$ and $x\in\X$ that
    \begin{equation*}
        \norm{y_+ - x} = \norm{\Pi_{\X}(x - \eta d)-(x-\eta d) -\eta d}
        \leq \norm{\Pi_{\X}(x - \eta d)-(x-\eta d)}  + \eta\norm{d}
\leq
        2\eta \norm{d}\leq 1.
    \end{equation*}
    Moreover, it follows from the choice of $\eta$ that 
    \begin{equation}\label{eq:sc}
        \frac{1}{2} \norm{c(y_+)}^2 \leq \frac{1}{2} \norm{c(x)}^2 + \inner{\nabla c(x) c(x), y_+-x} + \frac{1}{4\eta}\norm{y_+ - x}^2. 
    \end{equation} 
    Also, from the property of the projection that 
    \begin{equation*}
        y_+ \in \arg\min_{y \in \X} \frac{1}{2} \norm{c(x)}^2 + \inner{\nabla c(x) c(x), y-x} + \frac{1}{2\eta}\norm{y - x}^2,
    \end{equation*}
    we have
    \begin{equation*}
        \frac{1}{2} \norm{c(x)}^2 \geq \frac{1}{2} \norm{c(x)}^2+ \inner{\nabla c(x) c(x), y_+-x} + \frac{1}{2 \eta} \norm{y_+-x}^2 \geq \frac{1}{2} \norm{c(y_+)}^2 + \frac{1}{4\eta}\norm{y_+ - x}^2.
    \end{equation*}
    This completes the proof. 
\end{proof}

Now we present the following proposition establishing the global convergence of Algorithm \ref{Alg:Adap_AP2}. 
\begin{prop}\label{prop:gloconv}
   Suppose Assumption \ref{Assumption_X} and Assumption \ref{Assumption_Q} hold, and $\X$. Then for the sequence $\{\xk\}$ generated by Algorithm \ref{Alg:Adap_AP2}, any of its cluster point $x^*$ satisfies $x^* \in \X$ and 
    \begin{equation}
        0 \in \nabla c(x^*)c(x^*) + \NX(x^*). 
    \end{equation}
    % where $\NX(x^*)$ is the regular normal cone \cite[Definition 6.3]{rockafellar2009variational} of $\X$ at $x^*$. 
\end{prop}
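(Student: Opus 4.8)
The plan is to show that $\{\norm{c(x_k)}\}$ is monotonically nonincreasing, to reduce the claim to the case in which the projected-gradient branch of Algorithm~\ref{Alg:Adap_AP2} is eventually always taken, and then to pass to the limit along a subsequence converging to the cluster point, using the closed-graph property of the projection onto the possibly nonconvex set $\X$. First, since both $x_{\mathrm{trial},k}$ and the projected-gradient point $x_{+,k}$ in \eqref{eq:pgl} are obtained by projecting onto $\X$, we have $x_k\in\X$ for every $k\geq 1$, and as $\X$ is closed every cluster point $x^*$ of $\{x_k\}$ lies in $\X$. For the monotonicity: if the trial step is accepted, the acceptance test gives $\norm{c(x_{k+1})}<(1-\kappa)\norm{c(x_k)}$; if the projected-gradient branch is taken, the backtracking in Line~6 terminates after finitely many steps by Lemma~\ref{Le_sufficient_decrease} and produces $x_{k+1}$ with $\tfrac12\norm{c(x_{k+1})}^2\leq\tfrac12\norm{c(x_k)}^2-\tfrac{1}{4\eta_k}\norm{x_{k+1}-x_k}^2$. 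In either case $\norm{c(x_{k+1})}\leq\norm{c(x_k)}$, so $\norm{c(x_k)}\downarrow c^*$ for some $c^*\geq 0$, and by continuity of $c$ any cluster point satisfies $\norm{c(x^*)}=c^*$. If $\nabla c(x^*)c(x^*)=0$ --- in particular whenever $c(x^*)=0$ --- then $0\in\nabla c(x^*)c(x^*)+\NX(x^*)$ holds trivially, so from now on I assume $\nabla c(x^*)c(x^*)\neq 0$, which forces $c^*=\norm{c(x^*)}>0$, and I fix a subsequence $x_{k_j}\to x^*$.

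Next I would argue that only finitely many iterations take the trial step. If the trial step were accepted at indices $m_1<m_2<\cdots$, then $\norm{c(x_{m_i+1})}<(1-\kappa)\norm{c(x_{m_i})}$; letting $i\to\infty$, both sides converge to $c^*$, which forces $c^*\leq(1-\kappa)c^*$ and hence $c^*=0$, a contradiction. Thus there is $K_0$ such that the projected-gradient step is used for every $k\geq K_0$; telescoping the sufficient-decrease inequality over $k\geq K_0$ and using $\eta_k\leq\eta_{\max}$ yields $\sum_{k\geq K_0}\norm{x_{k+1}-x_k}^2\leq 2\eta_{\max}\norm{c(x_{K_0})}^2<\infty$, so $\norm{x_{k+1}-x_k}\to 0$ and in particular $x_{k_j+1}\to x^*$ as well.

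The key step is to keep the backtracked stepsizes bounded away from $0$ near $x^*$. Lemma~\ref{Le_sufficient_decrease} shows that the line-search condition \eqref{eq:nonin} is satisfied once the stepsize falls below a threshold governed by the local magnitude and local Lipschitz modulus of $\nabla c(\cdot)c(\cdot)$; by Assumption~\ref{Assumption_X} these quantities are uniformly controlled on a closed ball $\bb{B}(x^*;\epsilon)$, so there exist $\bar\eta>0$ and $\eta_{\min}>0$ such that \eqref{eq:nonin} holds for every $\eta\leq\bar\eta$ and the accepted stepsize satisfies $\eta_k\geq\eta_{\min}$ whenever $x_k\in\bb{B}(x^*;\epsilon)$. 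For large $j$ we have $x_{k_j}\in\bb{B}(x^*;\epsilon)$ and $k_j\geq K_0$, so after passing to a further subsequence along which $\eta_{k_j}\to\bar\eta_\infty\in[\eta_{\min},\eta_{\max}]$ and writing $x_{k_j+1}\in\Pi_{\X}\!\left(x_{k_j}-\eta_{k_j}\nabla c(x_{k_j})c(x_{k_j})\right)$, I would let $j\to\infty$: the projection arguments converge to $u^*:=x^*-\bar\eta_\infty\nabla c(x^*)c(x^*)$ and the projected points converge to $x^*$, so the closed-graph (outer semicontinuity) property of the projection onto the closed set $\X$ gives $x^*\in\Pi_{\X}(u^*)$. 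Hence $x^*$ minimizes $y\mapsto\tfrac12\norm{u^*-y}^2$ over $\X$, and the elementary first-order optimality condition for this problem gives $u^*-x^*\in\widehat{\mathcal{N}}_{\X}(x^*)\subseteq\NX(x^*)$, i.e.\ $-\bar\eta_\infty\nabla c(x^*)c(x^*)\in\NX(x^*)$; since $\NX(x^*)$ is a cone and $\bar\eta_\infty>0$, this is exactly $0\in\nabla c(x^*)c(x^*)+\NX(x^*)$.

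I expect the main obstacle to be this last step: establishing a neighborhood-wide version of the descent estimate of Lemma~\ref{Le_sufficient_decrease} so that the Armijo backtracking cannot drive $\eta_k$ to $0$ along the convergent subsequence, and then correctly taking the limit in the projection, which for a \emph{nonconvex} $\X$ must be handled through the outer semicontinuity of the possibly set-valued map $\Pi_{\X}$ together with $\widehat{\mathcal{N}}_{\X}\subseteq\NX$ rather than through a continuity argument valid only in the convex case. The preliminary bookkeeping --- monotonicity of $\norm{c(x_k)}$ and the finiteness of trial steps when $c^*>0$ --- is exactly what guarantees that the subsequence used in the limiting argument eventually consists of pure projected-gradient steps with stepsizes bounded below.
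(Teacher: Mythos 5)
Your proposal is correct and follows essentially the same route as the paper: monotonicity of $\norm{c(\xk)}$ from the combined decrease inequality, a dichotomy on whether the trial step is accepted infinitely often (infinitely often forces $c(\xk)\to 0$; finitely often lets you telescope the sufficient-decrease inequality with stepsizes bounded below to get $\norm{x_{k+1}-x_k}\to 0$), and then a limit passage to obtain the stationarity inclusion. The only cosmetic difference is in the last step, where you pass to the limit through the outer semicontinuity of $\Pi_{\X}$ and apply the optimality condition at $x^*$ via $\widehat{\ca{N}}_{\X}(x^*)\subseteq\NX(x^*)$, whereas the paper applies the optimality condition of \eqref{eq:pgl} at each iterate and invokes the graph-closedness of $\NX(\cdot)$ — two interchangeable phrasings of the same argument.
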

\begin{proof}
    It follows from Step 5--10 in Algorithm \ref{Alg:Adap_AP2} that   
    \begin{equation}
        \begin{aligned}
            &\frac{1}{2} \norm{c(\xkp)}^2 \leq \frac{1}{2} \norm{c(\xk)}^2 - \min\left\{ \frac{2\kappa-\kappa^2}{2} \norm{c(\xk)}^2, \frac{1}{4\eta_k}\norm{\Pi_{\X}(\xk - \eta_k \nabla c(\xk)c(\xk)) - \xk}^2 \right\}.
        \end{aligned}
    \end{equation}
    Then we can conclude that the sequence $\{\norm{c(\xk)}\}$ is non-increasing.

    Now let $\ca{I}$ be the indices of the iterations that Step 9 of Algorithm \ref{Alg:Adap_AP2} is performed. Suppose $\ca{I}$ contains infinitely many indices, then for any $k\geq 0$, 
    \begin{equation}
        \mathop{\lim\sup}_{k\to +\infty} \norm{c(\xk)} \leq \lim_{k\to +\infty} (1-\kappa)^{|\{i \in \ca{I}: i \leq k\}|} \norm{c(x_0)} = 0. 
    \end{equation}
    Therefore, we can conclude that $\lim_{k\to +\infty} \norm{c(\xk)} = 0$ and thus any limit point $x^*$ of $\{\xk\}$ satisfies $0 \in \nabla c(x^*) c(x^*) + \NX(x^*)$. 

    Conversely, suppose $\ca{I}$ only contains finitely many indices.  Let $N_{\ca{I}}$ be the largest index of $\ca{I}$. Then for any limit point $x^*$ of $\{\xk\}$, let $\{x_{i_k}\}$ be the subsequence of $\{\xk\}$ that converges to $x^*$ with $i_k \geq N_{\ca{I}}$ for any $k\geq 0$, then we have 
    \begin{equation}
        \frac{1}{2} \norm{c(x_{i_{j+1}})}^2 \leq \frac{1}{2} \norm{c(x_{N_{\ca{I}}})}^2 - \sum_{k = 0}^j\frac{1}{4\eta_{i_k}} \norm{\Pi_{\X}(x_{i_k} - \eta_{i_k} \nabla c(x_{i_k})c(x_{i_k})) - x_{i_k}}^2.
    \end{equation}
    Notice that since $\{x_{i_k}\}$ converges, it is uniformly bounded.
    As a result, from the boundedness of $\{x_{i_k}\}$ and \eqref{eq:eta}, it holds that $\inf_{k\geq 0} \eta_{i_k} > 0$, and thus 
    \begin{equation}
        \lim_{k\to +\infty} \Pi_{\X}(x_{i_k} - \eta_{i_k} \nabla c(x_{i_k})c(x_{i_k})) - x_{i_k} = 0.
    \end{equation}
    From the graph-closedness of $\NX(\cdot)$ and the necessary optimality  condition of \eqref{eq:pgl}, we can conclude that 
    \begin{equation}\label{eq:fonc}
        0 \in \nabla c(x^*) c(x^*) + \NX(x^*). 
    \end{equation}
    This completes the proof. 
\end{proof}  
As demonstrated in Proposition \ref{prop:gloconv}, Algorithm \ref{Alg:Adap_AP2} is only guaranteed to find the first-order stationary points of \eqref{Prob_Con}, which coincides with most existing results on the global convergence of LM methods. To enforce the convergence to $\K$, existing works require certain error-bound conditions. In the following corollary, we show that the error-bound condition ensures the global convergence to $\K$ for Algorithm \ref{Alg:Adap_AP2}. 

\begin{coro}
    Suppose Assumption \ref{Assumption_X} and Assumption \ref{Assumption_Q} hold, and the nondegeneracy condition, \eqref{Eq_Cond_Nondeg} holds for any $x \in \X$. Then the sequence $\{\xk\}$ generated by Algorithm \ref{Alg:Adap_AP2} converges to $x^* \in \K$ and achieves local quadratic convergence rate. 
\end{coro}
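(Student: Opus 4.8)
The plan is to combine the global convergence of Algorithm~\ref{Alg:Adap_AP2} established in Proposition~\ref{prop:gloconv} with the local quadratic convergence result in Theorem~\ref{Theo_quadconv}, using the error-bound consequence of the nondegeneracy condition to bridge between ``first-order stationary point of \eqref{Prob_Con}'' and ``point of $\K$''. The first step is to observe that since the nondegeneracy condition \eqref{Eq_Cond_Nondeg} holds at every $x \in \X$, any cluster point $x^*$ of $\{\xk\}$ (whose existence I would either assume from boundedness of the level set, or note follows from the monotonicity of $\{\norm{c(\xk)}\}$ together with a coercivity-type hypothesis on $\X$) satisfies, by Proposition~\ref{prop:gloconv}, the inclusion $0 \in \nabla c(x^*) c(x^*) + \NX(x^*)$. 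I would then argue that under the nondegeneracy condition this forces $c(x^*) = 0$: indeed $-\nabla c(x^*) c(x^*) \in \NX(x^*) \subseteq \mathrm{range}(\NX(x^*)) = (\mathrm{lin}(\TX(x^*)))^{\perp}$, so $c(x^*)^\top \nabla c(x^*)^\top v = 0$ for all $v \in \mathrm{lin}(\TX(x^*))$; since $\nabla c(x^*)^\top \mathrm{lin}(\TX(x^*)) = \Rp$, this gives $c(x^*) = 0$, i.e.\ $x^* \in \M$, and hence $x^* \in \K$.

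The second step is to transfer the asymptotic behaviour into the basin of local quadratic convergence. Since $\{\norm{c(\xk)}\}$ is non-increasing and $c$ is continuous, $\norm{c(\xk)} \to \norm{c(x^*)} = 0$, so in fact $\mathrm{dist}(\xk,\M) \to 0$ by Lemma~\ref{Le_aux_relationship_c_dist_PEJcc} applied near $x^*$. Picking the cluster point $x^*\in\K$, the neighbourhoods $\Omega_{x^*}$ and $\Xi_{x^*}$ from Table~\ref{Table_Constants} are well-defined; since a subsequence $x_{i_k}\to x^*$ and $\norm{c(x_{i_k})}\to 0$, for $k$ large enough we have $x_{i_k}\in\Xi_{x^*}$. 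I then need to check that, once an iterate lands in $\Xi_{x^*}$, the algorithm never triggers the projected-gradient branch again, so that all subsequent iterates are pure \eqref{Eq_Feasibility_RestorationQ} steps: this is because, by \eqref{eq-tmp1} in the proof of Proposition~\ref{prop:supconv}, a trial step from any $y\in\Omega_{x^*}$ satisfies $\norm{c(x_{\mathrm{trial}})}\le \tfrac12\norm{c(y)} < (1-\kappa)\norm{c(y)}$ whenever $\kappa<\tfrac12$ (and for general $\kappa\in(0,1)$ one shrinks $\delta_{x^*}$ further so that $R_{x^*}\norm{c(y)}\le(1-\kappa)$ on $\Omega_{x^*}$), so the ``else'' branch in Line~9 is always taken. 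Hence from that index onward the tail of $\{\xk\}$ coincides with a sequence generated by \eqref{Eq_Feasibility_RestorationQ} initialized inside $\Xi_{x^*}$.

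The third step is then a direct invocation of Theorem~\ref{Theo_quadconv}: the tail sequence converges to $\K\cap\Omega_{x^*}$, and $\mathrm{dist}(\xk,\K)$ converges to $0$ quadratically. In particular the whole sequence $\{\xk\}$ converges (the tail is Cauchy by Proposition~\ref{prop:supconv}, and it has $x^*$ as a cluster point, so it converges to some point of $\K$), which gives the claimed conclusion. The main obstacle I anticipate is the ``switching'' argument in the second step --- one must be careful that the globalization does not oscillate between the two branches once near $\K$, and this requires the quantitative bound $R_x\norm{c(y)}^2\le(1-\kappa)\norm{c(y)}$ on a possibly smaller neighbourhood than $\Omega_x$, i.e.\ a mild strengthening of the choice of $\delta_x$ in Table~\ref{Table_Constants} by a factor depending on $\kappa$; this is routine but needs to be stated. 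A secondary point requiring care is the existence of a cluster point, which is why the statement implicitly needs a compactness/coercivity assumption on $\X$ (or on the relevant sublevel set of $\norm{c(\cdot)}^2$ over $\X$); I would state this explicitly as a hypothesis.
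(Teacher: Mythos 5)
Your proposal is correct and follows essentially the same route as the paper: invoke Proposition \ref{prop:gloconv} to get $0 \in \nabla c(x^*)c(x^*) + \NX(x^*)$ at any cluster point, use the nondegeneracy condition to force $c(x^*)=0$ (the paper does this via $\NX(x^*)\cap\mathrm{Range}(\nabla c(x^*))=\{0\}$ followed by full column rank of $\nabla c(x^*)$, whereas you pair $-\nabla c(x^*)c(x^*)\in(\mathrm{lin}(\TX(x^*)))^{\perp}$ directly against $\nabla c(x^*)\tp\mathrm{lin}(\TX(x^*))=\Rp$ --- equivalent arguments), and then appeal to Theorem \ref{Theo_quadconv} for the rate. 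In fact your write-up is more careful than the paper's on two points it passes over in silence: the paper simply asserts that the quadratic rate ``follows directly'' from Theorem \ref{Theo_quadconv}, while you correctly observe that one must first verify the trial step is always accepted once the iterates enter a neighbourhood of $x^*$ (via the bound $R_{x^*}\norm{c(y)}\leq 1-\kappa$, possibly after shrinking $\delta_{x^*}$ by a $\kappa$-dependent factor) so that the tail is genuinely a sequence of the form \eqref{Eq_Feasibility_RestorationQ} to which the theorem applies; and you rightly flag that the existence of a cluster point is an unstated hypothesis (boundedness of the iterates or coercivity of $\norm{c(\cdot)}$ over $\X$), without which the corollary's conclusion is not actually guaranteed by Proposition \ref{prop:gloconv}.
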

\begin{proof}
It is known from the nondegeneracy condition $\nabla c(x)\tp \mathrm{lin}(\ca{T}_{\X}(x)) = \Rp$ and \cite[Lemma 2.2]{xiao2025ongoing} that $\NX(x)\cap{\rm Range}(\nabla c(x))=\{0\}$.  Combining this with \eqref{eq:fonc}, we have $\nabla c(x^*)c(x^*)=0$, which implies $c(x^*)=0$ by using the nondegeneracy condition
$\nabla c(x^*)\tp \mathrm{lin}(\ca{T}_{\X}(x^*)) = \Rp$ again. It follows that $x^*\in\K$. The local quadratic rate follows directly from Theorem \ref{Theo_quadconv}. This completes the proof. 
\end{proof}

\subsection{Choosing projective mapping $Q$}\label{sec:Q}

As demonstrated in the theoretical analysis in Theorem \ref{Theo_quadconv}, the formulation of the projective mapping $Q$ plays an important role in constructing the locally quadratically convergent update scheme \eqref{Eq_Feasibility_RestorationQ}. Therefore, it is important to determine an explicit formulation of $Q$ for at least a wide range of closed subsets $\X$. 
Although the previous work \cite[Table 2]{xiao2025ongoing} presents several basic formulations of projective mappings for some specific convex compact sets $\X$, their formulations do not require the validity of Assumption \ref{Assumption_Q}(4).

Therefore, in the following, we present the formulation of the projective mapping $Q$ that satisfies the requirements in Assumption \ref{Assumption_Q} for possibly nonconvex subsets $\X$. Table \ref{Table_Q_mapping} presents the possible choices of the projective mapping $Q$ for some popular subsets $\X$. It can be 
verified that all the provided projective mappings satisfy all the requirements in Assumption \ref{Assumption_Q}.

Additionally, as an illustration, we present detailed verifications on the validity of Assumption \ref{Assumption_Q} for the projective mappings corresponding to the $\ell_q$-norm ball and low-rank variety. 
\begin{examples}[ $\ell_q$-norm ball ]
\label{Example_Q_lq}
When $\X$ is the $\ell_{q}$-norm ball in $\Rn$, i.e., $\X = \{x \in \Rn: \|x\|_q \leq 1\}$ for a prefixed $q \in (0,1]$, the corresponding projective mapping $Q$ can be chosen as 
        \begin{equation}
            Q(x) = \left(\mathrm{Diag}(|x|^{2-q}) - xx \tp \right) + (1 - \norm{x}_q^q) I_n. 
        \end{equation}
Denote $x:=(x_1,\dots,x_n)^\top$, it can be checked directly that Assumption \ref{Assumption_Q} (1) trivially holds. Also, it can be checked directly that 
\begin{equation*}
    \mathrm{Diag}(|x|^{q/2-1})\left(\mathrm{Diag}(|x|^{2-q}) - xx \tp \right)\mathrm{Diag}(|x|^{q/2-1})=I-ww^\top,
\end{equation*}
where $w=|x|^{\frac{q}{2}}\odot{\rm sign}(x)$, with $\odot$ denoting the Hadamard product. Since $\norm{w}_2^2=\norm{x}_q^q\leq1$, we know that $\mathrm{Diag}(|x|^{2-q}) - xx \tp $ is positive semi-definite. 
Combining this with $(1 - \norm{x}_q) I_n\in\mathbb{S}_+^n$, we know that $Q(x)$ is semi-definite for all $x\in\X$, which yields Assumption \ref{Assumption_Q} (2).

Next we verify the validity of Assumption \ref{Assumption_Q}(3). 
Let 
$$l(t) = \begin{cases}
    |t|^{q-1} \, \mathrm{sign}(t) & t\neq 0\\
    0 & t = 0.
\end{cases}$$
For any $x \in \X$ that satisfies $\norm{x}_q = 1$,
define $u(x) = [l(x_1), ..., l(x_n)]\tp$. 
Then we can conclude that $Q(x)u(x) = 0$. Moreover, for any $i \in [n]$ such that $x_i = 0$, it holds that $Q(x)e_i  = 0$ where $e_i \in \Rn$ refers to the vector whose $i$-th coordinate is $1$ and all the other coordinates are  $0$.
Notice that $\mathrm{rank}(Q(x)) = \mathrm{nnz}(x) -1$.
Define the vector $y = \sum_{\{i\in[n]: x_i=0\}} e_i\in\mathbb{R}^n$. Then we can conclude that 
\begin{equation*}
    \mathrm{null}(Q(x)) = \{t_1 u(x) + t_2 y:  (t_1, t_2) \in \bb{R}^2\},
\end{equation*}
which exactly equals to $\NX(x)$. 
Additionally, when $\norm{x}_{q} < 1$, it holds that $Q(x) \succ 0$, and hence $\mathrm{null}(Q(x)) =\{0\} = \NX(x)$. This completes the verification of Assumption \ref{Assumption_Q}(3).

Furthermore, notice that for any $t \neq 0$ and any $\delta \leq |t|$, it holds that $|t + \delta|^q \leq |t|^q + q \cdot \mathrm{sign}(t) |t|^{q-1} \delta $. Therefore, for any $x \in \X$ and any $d \in \Rn$ with $\norm{d} \leq n^{1/2-1/q}$, we have 
\begin{equation*}
    \norm{x + \left(\mathrm{Diag}(|x|^{2-q}) - xx \tp \right)d}_q^q \leq \norm{x}_q^q + q\inner{u(x), \left(\mathrm{Diag}(|x|^{2-q}) - xx \tp \right)d} = \norm{x}_q^q.
\end{equation*}
% $\norm{x + \left(\mathrm{Diag}(|x|^{2-q}) - xx \tp \right)d}_q^q \leq \norm{x}_q^q$. 
Noting that $(t_1 + t_2)^q \leq t_1^q + t_2^q$ holds for any $t_1, t_2 \in [0,1)$, we get
\begin{equation*}
    \begin{aligned}
        &\norm{x + Q(x)d}_q^q \leq \norm{x + \left(\mathrm{Diag}(|x|^{2-q}) - xx \tp \right)d}_q^q + \norm{(1-\norm{x}_q^q)d}_q^q \\
        \leq{}& \norm{x}_q^q + (1-\norm{x}_q^q) \norm{d}_q^q \leq \norm{x}_q^q + 1-\norm{x}_q^q \leq 1.
    \end{aligned}
\end{equation*}
Here the second last inequality uses the Jensen's inequality to get that $\norm{d}_q^q \leq n^{1-q/2}\norm{d}^q \leq 1$. 
Therefore, we can conclude that 
\begin{equation*}
     x+Q(x)d \in \X, \quad \forall\; \norm{d} \leq n^{1/2-1/q}.
\end{equation*}
On the other hand, for any $d\in\bb{R}^n$ satisfying $n^{1/2-1/q} \leq \norm{d} \leq 1$, let $\hat{d}_1 = n^{1/2-1/q}\frac{d}{\norm{d}}$ and $\hat{d}_2 = d - \hat{d}_1$. Then we can conclude that 
$\norm{\hat{d}_2} \leq \norm{d} \leq n^{1/q-1/2}\norm{d}^2$. Therefore, it follows from Lemma \ref{Le_aux_2} that 
\begin{equation*}
    \mathrm{dist}(x + Q(x)d, \X) =  \mathrm{dist}((x + Q(x)\hat{d}_1) + Q(x)\hat{d}_2 , \X) \leq 2 \norm{Q(x) \hat{d}_2} \leq 2 
    n^{1/q-1/2}\norm{Q(x)} \norm{d}^2. 
\end{equation*}
This verifies the validity of Assumption \ref{Assumption_Q}(4). 
 Therefore, we can conclude that $Q(x)$ satisfies all the conditions in Assumption \ref{Assumption_Q}.
\end{examples}

\begin{examples}[Low-rank variety]
When $\X \subseteq \bb{R}^{m \times q}$ ($m\geq q$) is the low-rank variety with rank $r$ ($0<r\leq q$), i.e., $\X := \{X \in \bb{R}^{m \times q}: \mathrm{rank}(X) \leq r\}$, we consider the following projective mapping $Q(X): \bb{R}^{m\times q}\to\bb{R}^{m\times q}$,
\begin{equation}
    Q(X)[D] = \frac{1}{2}( XX\tp D + DX\tp X).
\end{equation}
It can be checked directly that Assumption \ref{Assumption_Q} (1)-(2) hold. 

Moreover, for any $X \in \X$ that admits the singular value decomposition $X=[U_1,U_2]
\left[\begin{smallmatrix}
    \Sigma_1 & 0\\
    0 & 0
\end{smallmatrix}\right][V_1,\; V_2]^\top$ with $\mathrm{rank}(X)=d\leq r$, where $\Sigma_1\in \bb{R}^{d\times d}$, $U_1\in\bb{R}^{m\times d}$, $U_2\in\bb{R}^{m\times (m-d)}$, $V_1\in\bb{R}^{q\times d}$, $V_2\in\bb{R}^{q\times (q-d)}$,
it holds that $\NX(X)=\{Y\in\bb{R}^{m\times q} \mid Y =U_2MV_2^\top, 
M\in\bb{R}^{(m-d)\times(q-d)}\}=\mbox{range}(\NX(X))$. As a result, $\mbox{null}\big(Q(X)\big)=
\{D\in\bb{R}^{m\times q}: U_1\tp DV_1=0, \, U_2\tp D V_1 =0, ~ U_1\tp D V_2 = 0\}
=\NX(X)=\mbox{range}(\NX(X))$. This verifies the validity of Assumption \ref{Assumption_Q}(3).

Let $U=[U_1,U_2]$ and $V=[V_1,V_2]$.
For any $X\in\X$, and any $D\in\mathbb{R}^{m\times q}$, it holds that 
\begin{equation}
    \begin{aligned}
            &U\tp (X+Q(X)[D]) V  
    = 
    \left[\begin{matrix}
    \Sigma_1 + \frac{1}{2}\Sigma_1^2 U_1\tp D V_1 + \frac{1}{2}U_1\tp D V_1  \Sigma_1^2& \frac{1}{2}\Sigma_1^2 U_1\tp D V_2\\[3pt]
    \frac{1}{2}U_2\tp D V_1 \Sigma_1^2 & 0
\end{matrix}\right].
    \end{aligned}
\end{equation}

Let
\begin{equation*}
    \begin{aligned}
&Y = U\left[\begin{matrix}
    I_d& 0\\
    \frac{1}{2}U_2\tp D V_1 \Sigma_1&0
\end{matrix}\right]\left[\begin{matrix}
    \Sigma_1 + \frac{1}{2}\Sigma_1^2 U_1\tp D V_1 + \frac{1}{2}U_1\tp D V_1  \Sigma_1^2& \frac{1}{2}\Sigma_1^2 U_1\tp D V_2\\
    0&0
\end{matrix}\right] V^\top.
    \end{aligned}
\end{equation*}
We have that ${\rm rank}(Y) \leq d$, and hence $Y\in \X$.
Then from the expression of $U\tp (X+Q(X)[D])V$, we have
\begin{equation*}
    \begin{aligned}
    &\norm{ (X+Q(X)[D])  - Y}
        =\norm{U\tp (X+Q(X)D)V  - U^\top Y V}\\
    ={}&\frac{1}{4}\norm{\left[\begin{matrix}
    0& 0\\
     U_2\tp D V_1 \Sigma_1(\Sigma_1^2 U_1\tp D V_1 + U_1\tp D V_1\Sigma_1^2)& U_2\tp D V_1 \Sigma_1^3 U_1\tp D V_2
\end{matrix}\right]} \leq \frac{3}{4}\norm{X}^3 \norm{D}^2. 
    \end{aligned}
\end{equation*}
Therefore, we can conclude that 
\begin{equation*}
    \mathrm{dist}(X + Q(X)[D], \X) \leq \norm{X + Q(X)[D] - Y}
    \leq 
    \frac{3}{4}\norm{X}^3\norm{D}^2, \quad \forall ~ \norm{D} \leq 1.
\end{equation*}
This illustrates that  Assumption \ref{Assumption_Q}(4) holds. Therefore, we can conclude that $Q$ satisfies all the conditions in Assumption \ref{Assumption_Q}.

\end{examples}

   \begin{table}[tb]
    \centering
    \small
    \begin{tabular}{p{4cm}|p{5cm}|p{6cm}}
        \hline
        \textbf{Name of constraints} & \textbf{Formulation of $\X$} & \textbf{Possible choices of projective mapping} \\ \hline
        Box constraints &
        $\{x \in \Rn: l \leq x \leq u\}$ &
        $Q(x) = \mathrm{Diag}(q_1(x), \ldots, q_n(x))$ \newline where $\{q_i\}$ are Lipschitz continuous, $q_i(l_i) = q_i(u_i) = 0$, and $q_i(s) > 0$ for $s \in (l_i, u_i)$ \\ \hline

        Nonnegative orthant &
        $\{x \in \Rn: x \geq 0\}$ &
        $Q(x) = \mathrm{Diag}(x)$ \\[3pt] \hline
        
        $\ell_2$-norm ball &
        $\{x \in \Rn: \norm{x} \leq u\}$ &
        $Q(x) = I_n - \frac{xx\tp}{u^2}$ \\[3pt] \hline
        
        $\ell_1$-norm ball &
        $\{x \in \Rn: \norm{x}_1 \leq 1\}$ &
        $Q(x) = \left(\text{Diag}(|x|) - xx\tp \right) + (1-\norm{x}_1) I_n$ \\ \hline

        Probability simplex &
        $\{x \in \Rn: x \geq 0, \norm{x}_1 = 1\}$ &
        $Q(x) = \text{Diag}(x) - xx\tp $ \\[3pt] \hline

        Spectral constraints &
        $\{X \in \bb{R}^{m\times s} : \norm{X}_2 \leq 1\}$ &
        $Q(X)= Y \mapsto Y - X \Phi(X\tp Y)$ \\[3pt] \hline
        
        PSD cone &
        $\{X \in \bb{R}^{s\times s} : X \succeq 0\}$ &
        $Q(X)= Y \mapsto \Phi(XY)$ \\[3pt] \hline
        
        PSD matrices with spectral constraints &
        $\{X \in \bb{R}^{s\times s} : X \succeq 0, \norm{X}_2 \leq 1\}$ &
        $Q(X)= Y \mapsto \Phi(X(I_n - X)Y)$ \\ \hline

        $l_0$-norm ball & 
        $\{x \in \Rn: \|x\|_0 \leq u\}$ &
        $Q(x) = \mathrm{Diag}(x^{\odot 2})$ \\[3pt] \hline

        $l_q$-norm ball with $0<  q\leq 1$& 
        $\{x \in \Rn: \|x\|_q \leq 1\}$ &
        $Q(x) = \left( \Diag(|x|^{2-q}) - x x^\top \right) + (1 - \|x\|_q^q) I_n$ \\ \hline

        Low-rank variety & 
        $\{X \in \bb{R}^{m \times q}: \mathrm{rank}(X) \leq r\}$ &
        $Q(X) = D \mapsto \frac{1}{2}\left(DX\tp X +  XX\tp D \right)$ \\ \hline

        Symmetric low-rank variety & 
        $\{X \in \bb{S}^{m \times m}: \mathrm{rank}(X) \leq r\}$ &
        $Q(X) = D \mapsto\Phi(DX^2)$ \\ \hline

        Low-rank PSD matrices & 
        $\{X \in \bb{S}^{m \times m}: \mathrm{rank}(X) \leq r, ~X \succeq 0\}$ &
        $Q(X) = D \mapsto\Phi(DX)$ \\ \hline
    \end{tabular}
    \caption{Some closed and possibly nonconvex subsets $\X$ and their corresponding projective mappings. Here $\Phi$ is the symmetrization of a square matrix, defined as $\Phi(M):= \frac{M+M\tp}{2}$. Additionally, $\bb{S}^{m \times m}$ denotes the set of real $m \times m$ symmetric matrices. }
    \label{Table_Q_mapping}
\end{table}

\section{A Quadratically Convergent Bregman Proximal Method}

In this section, we investigate the relationship between \eqref{Eq_Feasibility_RestorationQ} and the Bregman proximal gradient methods.

We begin with the following conditions on the Bregman kernel function. Let $\phi: \Rn \to \bb{R} \cup \{+\infty\}$ be a twice continuously differentiable function over its domain, we say $\phi$ is a projective Bregman kernel function for $\X$ if it satisfies the following conditions. 
\begin{cond}
    \label{Cond_Bregman}
    \begin{enumerate}
        \item $\phi$ is proper, lower-semicontinuous and strictly convex over $\Rn$ with ${\rm dom}(\phi):=\{x \in \Rn: \phi(x) < +\infty\} = \mathrm{int}(\X)$.
        \item $\phi$ is three-times continuously differentiable over $\mathrm{int}(\X)$, and the following function
        \begin{equation}
             T_{\phi}(x;d) := \frac{1}{2} \nabla^2 \phi(x)^{-1} \big(\nabla^3 \phi(x)[\nabla^2 \phi(x)^{-1}d] \big)\nabla^2 \phi(x)^{-1}
        \end{equation}
         is locally bounded over $\X \times \Rn$.
        \item The value of $(\nabla^2 \phi(x))^{-1}$ can be naturally extended from $\mathrm{int}(\X)$ to $\X$, which is denoted as $W_{\phi}(x)$. Moreover, $W_{\phi}(x)$ is locally Lipschitz continuous over $\X$, and $\mathrm{null}(W_{\phi}(x)) = \mathrm{range}(\ca{N}_{\X}(x))$ for any $x \in \X$. 
    \end{enumerate}
\end{cond}
From Condition \ref{Cond_Bregman}, we can conclude that for any $\phi$ that is a projective Bregman kernel function for $\X$, it is a Bregman kernel function for $\X$. Therefore, based on the formulation of the Bregman kernel function, we first consider the following update scheme,
\begin{equation}
    \label{Eq_Bregman_PG}
    \xkp = \mathop{\arg\min}_{y \in \Rn}\quad \inner{y-\xk, d(\xk)} + D_{\phi}(y, \xk),
\end{equation}
where $d(\xk)$ is defined as in \eqref{Eq_AQ_general}, $D_{\phi}(y, x):= \phi(y) - \phi(x) - \inner{\nabla \phi(x), y-x}$ is the Bregman divergence induced by $\phi$. Moreover, based on the differentiability of $\phi$ and the optimality condition of the subproblem  \eqref{Eq_Bregman_PG}, we can reformulate \eqref{Eq_Bregman_PG} as the following scheme,
\begin{equation}
    \label{Eq_Bregman_PG_inv}
    \xkp = (\nabla \phi)^{-1}\left( \nabla \phi(\xk) - d(\xk)  \right). 
\end{equation}

Next, from the local boundedness of $T_{\phi}(x;d)$ in Condition \eqref{Cond_Bregman}, we define the following constant, 
\begin{equation}
    L_{x, T} := \sup_{\norm{y-x}\leq \norm{\nabla^2 \phi(x)^{-1}}}\; \sup_{d \in \Rn, \norm{d} = 1} \norm{T_{\phi}(y;d)}. 
\end{equation} 
%%Then the following lemma illustrates the well-definedness of $L_{x, T}$. 
\begin{lem}
    \label{Le_approx_inv_nablaphi}
    Suppose $\phi$ is a projective Bregman kernel function for $\X$. Then for any $x \in \mathrm{int}(\X)$ and any $d \in \bb{B}(0;1)$, it holds that 
    \begin{equation*}
        \norm{(\nabla \phi)^{-1}(\nabla \phi(x) + d) - \left( x + \nabla^2 \phi(x)^{-1}d  \right)} \leq L_{x, T}\norm{d}^2. 
    \end{equation*}
\end{lem}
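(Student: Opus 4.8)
The plan is to prove this as a quantitative inverse function theorem statement, controlling the error in the first-order Taylor expansion of $(\nabla\phi)^{-1}$ via the second-order Taylor expansion of $\nabla\phi$ itself. Write $y := (\nabla\phi)^{-1}(\nabla\phi(x)+d)$, so that $\nabla\phi(y) = \nabla\phi(x)+d$, i.e., $\nabla\phi(y)-\nabla\phi(x) = d$. The target is to bound $\norm{y - (x + \nabla^2\phi(x)^{-1}d)}$. First I would establish an a priori estimate that $y$ is close to $x$: since $\phi$ is strictly convex and $\nabla^2\phi$ is continuous and invertible near $x$, the map $\nabla\phi$ is a local diffeomorphism, and for $\norm{d}\le 1$ one gets $\norm{y-x} \le C\norm{d}$ for a constant controlled by $\norm{\nabla^2\phi(x)^{-1}}$; in fact the constant $L_{x,T}$ is defined precisely with the radius $\norm{\nabla^2\phi(x)^{-1}}$ in mind, so I would first argue $\norm{y-x}\le \norm{\nabla^2\phi(x)^{-1}}$ (shrinking to $\bb{B}(0;1)$ as needed), placing $y$ in the ball over which $T_\phi$ is bounded by $L_{x,T}$.

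Next, apply Taylor's theorem with integral (or Lagrange) remainder to $\nabla\phi$ along the segment from $x$ to $y$:
\begin{equation*}
    d = \nabla\phi(y) - \nabla\phi(x) = \nabla^2\phi(x)(y-x) + \int_0^1 (1-t)\,\nabla^3\phi(x + t(y-x))[y-x]\,(y-x)\,dt.
\end{equation*}
Solving for $y-x$ gives $y - x = \nabla^2\phi(x)^{-1}d - \nabla^2\phi(x)^{-1}\,\mathcal{R}$, where $\mathcal{R}$ is the integral remainder above. Hence
\begin{equation*}
    y - \big(x + \nabla^2\phi(x)^{-1}d\big) = -\,\nabla^2\phi(x)^{-1}\mathcal{R},
\end{equation*}
and it remains to bound $\norm{\nabla^2\phi(x)^{-1}\mathcal{R}}$ by $L_{x,T}\norm{d}^2$. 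Using the a priori bound $\norm{y-x}\le C\norm{d}$ to write $y-x = \nabla^2\phi(x)^{-1}d + O(\norm{d}^2)$ inside $\mathcal{R}$, the leading contribution to $\nabla^2\phi(x)^{-1}\mathcal{R}$ is $\tfrac12\nabla^2\phi(x)^{-1}\nabla^3\phi(\xi)[\nabla^2\phi(x)^{-1}d]\,\nabla^2\phi(x)^{-1}d$, which is exactly $T_\phi(\xi;d)$ applied to $d$ up to the precise normalization in the definition of $T_\phi$; this is where the definition of $T_\phi$ and of $L_{x,T}$ is tailored to make the bound come out as stated.

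The main obstacle I anticipate is the bookkeeping needed to show that replacing the exact $y-x$ by $\nabla^2\phi(x)^{-1}d$ inside the quadratic remainder introduces only higher-order error that can be absorbed — i.e., making rigorous that all the $O(\norm{d}^2)$-and-smaller corrections are genuinely dominated by $L_{x,T}\norm{d}^2$ with the stated constant, rather than merely $CL_{x,T}\norm{d}^2$. This likely requires either a slightly more careful fixed-point / contraction argument for $y$ (writing $y = x + \nabla^2\phi(x)^{-1}d - \nabla^2\phi(x)^{-1}\mathcal{R}(y)$ and iterating) or restricting to a small enough ball where the linearization dominates, exploiting that $L_{x,T}$ is a supremum over the full ball of radius $\norm{\nabla^2\phi(x)^{-1}}$ and hence has slack. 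A clean way to close this is to note $\norm{y-x}$ itself satisfies $\norm{y-x}\le \norm{\nabla^2\phi(x)^{-1}}\norm{d} + \norm{\nabla^2\phi(x)^{-1}}\cdot\tfrac12\cdot(\text{third-deriv bound})\cdot\norm{y-x}^2$, solve this scalar quadratic inequality to pin $\norm{y-x}$, and then feed it back once; the self-improving structure gives the sharp constant without a full iteration.
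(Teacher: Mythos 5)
Your route (forward Taylor expansion of $\nabla\phi$ along the segment from $x$ to $y:=(\nabla\phi)^{-1}(\nabla\phi(x)+d)$, then solving for $y-x$) is genuinely different from the paper's, and it is precisely at the obstacle you flag that it does not close. The paper instead differentiates the identity $(\nabla\phi)^{-1}\circ\nabla\phi=\mathrm{id}$ twice to obtain $\nabla\big((\nabla\phi)^{-1}\big)(\nabla\phi(y))=\nabla^2\phi(y)^{-1}$ and $\nabla^2\big((\nabla\phi)^{-1}\big)(\nabla\phi(y))[d]=-\nabla^2\phi(y)^{-1}\big(\nabla^3\phi(y)[\nabla^2\phi(y)^{-1}d]\big)\nabla^2\phi(y)^{-1}=-2T_\phi(y;d)$; that is, $T_\phi$ is \emph{by construction} exactly half the second derivative of the inverse gradient map. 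The first-order Taylor expansion of $(\nabla\phi)^{-1}$ with integral (mean-value) remainder then gives the bound with constant $\sup\norm{T_\phi}=L_{x,T}$ immediately, no substitutions required.

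In your decomposition the remainder is $\nabla^2\phi(x)^{-1}\int_0^1(1-t)\,\nabla^3\phi(x+t(y-x))[y-x](y-x)\,dt$, which matches $T_\phi(\xi;\cdot)$ only after two replacements: $y-x\mapsto\nabla^2\phi(\xi)^{-1}d$ inside the cubic form, and the outer $\nabla^2\phi(x)^{-1}$ (evaluated at the base point) by $\nabla^2\phi(\xi)^{-1}$ (evaluated at the intermediate point). Each replacement carries a multiplicative error of order $\norm{d}$, which is not negligible since the lemma is asserted for all $\norm{d}\le 1$, not just asymptotically. Your proposed fix --- solving the scalar quadratic inequality for $\norm{y-x}$ and feeding it back once --- does pin down $\norm{y-x}\le C\norm{d}$, but it cannot erase these corrections: you end up with a bound of the form $\tfrac12\norm{\nabla^2\phi(x)^{-1}}\cdot\sup\norm{\nabla^3\phi}\cdot(C\norm{d})^2$, i.e.\ $C_x\norm{d}^2$ with a constant that is not $L_{x,T}$ and is not obviously dominated by it. So the strategy proves a weaker statement (quadratic error with \emph{some} locally bounded constant), which would in fact suffice for how the lemma is used downstream in Proposition \ref{Prop_Hessinv_Projjective}, but it does not prove the lemma with the stated constant $L_{x,T}$. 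To get that constant you need to recognize $T_\phi$ as the second derivative of the inverse map and expand $(\nabla\phi)^{-1}$ directly, as the paper does.
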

\begin{proof}
    From the smoothness of $\phi$ and the chain-rule applied
    to $(\nabla \phi)^{-1} \nabla\phi(x) = x$, we can conclude that for any $x \in \mathrm{int}(\X)$, it holds that 
    \begin{equation*}
        \left(\nabla (\nabla \phi)^{-1} \right) (\nabla \phi(x)) = \nabla^2 \phi(x)^{-1}. 
    \end{equation*}
    As a result, by applying the chain-rule again, we can conclude that for any $d \in \Rn$, it holds that 
    \begin{equation*}
        \left(\nabla^2 (\nabla \phi)^{-1} \right) (\nabla \phi(x))[d] = -\nabla^2 \phi(x)^{-1} \left(\nabla^3 \phi(x)[\nabla^2 \phi(x)^{-1}d] \right) \nabla^2 \phi(x)^{-1}.
    \end{equation*}
    As a result, we can conclude that 
    \begin{equation*}
        \norm{(\nabla \phi)^{-1}(\nabla \phi(x) + d) - \left( x + \nabla^2 \phi(x)^{-1}d - T_{\phi}(x; d) d \right)} = o(\norm{d}^2). 
    \end{equation*}
    Together with the mean-value theorem, we can conclude that 
    \begin{equation*}
        \norm{(\nabla \phi)^{-1}(\nabla \phi(x) + d) - \left( x + \nabla^2 \phi(x)^{-1}d  \right)} \leq L_{x, T}\norm{d}^2. 
    \end{equation*}
    This completes the proof. 
\end{proof}

Moreover, we have the following proposition illustrating that $\nabla^2 \phi(x)^{-1}$ is a projective mapping for $\X$.
\begin{prop}
    \label{Prop_Hessinv_Projjective}
    Suppose Assumption \ref{Assumption_X} holds, and $\phi$ is a projective Bregman kernel function for $\X$, then the mapping $W_{\phi}(x)$ is a projective mapping for $\X$.
\end{prop}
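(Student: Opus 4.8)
The plan is to check that $W_\phi$ satisfies every item of Assumption~\ref{Assumption_Q}, which is precisely the definition of a projective mapping for $\X$. Items (1) and (3) are immediate from Condition~\ref{Cond_Bregman}(3), which already states that $W_\phi$ is locally Lipschitz continuous on $\X$ (hence continuous) and that $\mathrm{null}(W_\phi(x)) = \mathrm{range}(\ca{N}_{\X}(x))$ for all $x \in \X$. For item (2) I would work first on $\mathrm{int}(\X) = \mathrm{dom}(\phi)$: strict convexity of $\phi$ together with the invertibility of $\nabla^2\phi(x)$ --- implicit since $\nabla^2\phi(x)^{-1}$ appears in the definitions of $T_\phi$ and $W_\phi$ --- gives $\nabla^2\phi(x) \succ 0$, hence $W_\phi(x) = \nabla^2\phi(x)^{-1} \succ 0$ there. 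Since $W_\phi$ is the continuous extension of this map to $\X$, for $x \in \X$ the matrix $W_\phi(x)$ is a limit of the positive definite matrices $\nabla^2\phi(x')^{-1}$ as $x' \to x$ in $\mathrm{int}(\X)$, hence $W_\phi(x) \succeq 0$.

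The substantive part is item (4). Fix $x \in \mathrm{int}(\X)$ and $d$ with $\norm{d} \le 1$, and set $z := (\nabla\phi)^{-1}(\nabla\phi(x) + d)$. Since $(\nabla\phi)^{-1}$ takes values in $\mathrm{dom}(\phi) = \mathrm{int}(\X) \subseteq \X$, the point $z$ is feasible, and since $W_\phi(x) = \nabla^2\phi(x)^{-1}$ on $\mathrm{int}(\X)$, Lemma~\ref{Le_approx_inv_nablaphi} gives $\norm{z - (x + W_\phi(x)d)} \le L_{x,T}\norm{d}^2$. Therefore $\dist(x + W_\phi(x)d, \X) \le \norm{(x + W_\phi(x)d) - z} \le L_{x,T}\norm{d}^2$, so item (4) holds on $\mathrm{int}(\X)$ with $\rho(x) = L_{x,T}$, which is finite and locally bounded over $\mathrm{int}(\X)$ because $T_\phi$ is locally bounded (Condition~\ref{Cond_Bregman}(2)).

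To reach boundary points I would pass to the limit, using the continuity in Condition~\ref{Cond_Bregman}(3) and the fact that in the convex Bregman setting of this section $\X$ equals the closure of $\mathrm{int}(\X)$. For $x \in \X$ choose $x_k \in \mathrm{int}(\X)$ with $x_k \to x$; the previous step gives $\dist(x_k + W_\phi(x_k)d, \X) \le L_{x_k,T}\norm{d}^2$ for all $\norm{d}\le 1$. Now $x_k + W_\phi(x_k)d \to x + W_\phi(x)d$ by continuity of $W_\phi$, the map $\dist(\cdot,\X)$ is continuous, and the constants $L_{x_k,T}$ are eventually dominated by one finite number $\rho(x)$: the radius $\norm{\nabla^2\phi(x_k)^{-1}} = \norm{W_\phi(x_k)}$ in the definition of $L_{x_k,T}$ stays bounded along the sequence by continuity of $W_\phi$, and $T_\phi$ is locally bounded over $\X \times \Rn$. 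Passing to the limit yields $\dist(x + W_\phi(x)d, \X) \le \rho(x)\norm{d}^2$, and tracking the construction shows $\rho$ may be chosen locally bounded on $\X$, which establishes item (4) and hence the proposition.

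I expect the main obstacle to be exactly this boundary bookkeeping --- confirming that the limits of $L_{x,T}$ stay finite and that the resulting $\rho$ is genuinely locally bounded on all of $\X$ --- together with the regularity that makes the interior step legitimate: that $(\nabla\phi)^{-1}$ is defined on $\nabla\phi(x)+d$ for $\norm{d}\le 1$ (the essential-smoothness / Legendre-type property of a Bregman kernel function, already used implicitly in stating Lemma~\ref{Le_approx_inv_nablaphi}) and that $\X = \mathrm{cl}(\mathrm{int}(\X))$. By contrast, the interior estimate is a one-line consequence of Lemma~\ref{Le_approx_inv_nablaphi} and needs no additional work.
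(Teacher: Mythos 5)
Your proof is correct and follows essentially the same route as the paper: items (1)--(3) are read off from Condition~\ref{Cond_Bregman}, and item (4) is obtained on $\mathrm{int}(\X)$ by taking $z=(\nabla\phi)^{-1}(\nabla\phi(x)+d)\in\X$ and invoking Lemma~\ref{Le_approx_inv_nablaphi} to get $\dist(x+W_\phi(x)d,\X)\leq L_{x,T}\norm{d}^2$, exactly as the paper does. In fact you are somewhat more careful than the paper's own proof, which stops at interior points and does not spell out the limiting argument extending the bound (and the local boundedness of $\rho$) to boundary points of $\X$.
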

\begin{proof}
    The validity of Assumption \ref{Assumption_Q}(1)-(3) can be verified by direct calculation. Therefore, we only need to verify the validity of Assumption \ref{Assumption_Q}(4). 

    Notice that for any $x \in \mathrm{int}(\X)$ and any $d \in \bb{B}(0;1)$, there exists $\alpha \in [0, 1]$ and $z = (\nabla \phi)^{-1}(\nabla \phi(x) + \alpha d)$ such that 
    \begin{equation}
        \begin{aligned}
            &\mathrm{dist}(x + W_{\phi}(x) d, \X) = \mathrm{dist}(x + \nabla^2 \phi(x)^{-1} d, \X) \\
            \leq{}& \norm{ (x + \nabla^2 \phi(x)^{-1} d) - (\nabla \phi)^{-1}(\nabla \phi(x) + d) } \leq L_{x,T}\norm{d}^2.
        \end{aligned}
    \end{equation}
    Here the last inequality follows from Lemma \ref{Le_approx_inv_nablaphi}. 
    Therefore, we can conclude the validity of Assumption \ref{Assumption_Q}(4). This completes the proof. 
\end{proof}

Then based on Theorem \ref{Theo_quadconv}, the following theorem illustrates that the iterates $\{\xk\}$ generated by \eqref{Eq_Bregman_PG} fits into the framework \eqref{Eq_Feasibility_RestorationQ}, which further leads to the local quadratic convergence rate of $\{\xk\}$.
\begin{theo}
    \label{Theo_Bregman_quadconv}
    Suppose Assumption \ref{Assumption_X} holds, $\phi$ is a projective Bregman kernel function. Then for any $x \in \K$ satisfying the nondegeneracy condition \eqref{Eq_Cond_Nondeg} and any $x_0 \in \Xi_x \cap \mathrm{int}(\X)$, the iterates $\{\xk\}$ generated by \eqref{Eq_Bregman_PG} 
    converges to $x^* \in \Omega_x\cap\K$ quadratically. 
\end{theo}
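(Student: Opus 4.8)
The plan is to recognize the Bregman iteration \eqref{Eq_Bregman_PG}--\eqref{Eq_Bregman_PG_inv} as a concrete instance of the abstract update \eqref{Eq_Feasibility_RestorationQ} with projective mapping $Q = W_{\phi}$, and then to invoke Theorem \ref{Theo_quadconv}. By Proposition \ref{Prop_Hessinv_Projjective}, $W_{\phi}$ is a projective mapping for $\X$, so Assumption \ref{Assumption_Q} holds with $Q = W_{\phi}$ and every construction of Section \ref{Subsection_Constants} is available with this choice; in particular the constraint-dissolving mapping \eqref{Eq_AQ_general} becomes $\A(x) = x - W_{\phi}(x)\,d(x)$, and Lemma \ref{Le_aux_0} gives $\sigma_{x,Q} = \sigma_{\min}(\nabla c(x)\tp W_{\phi}(x)\nabla c(x)) > 0$ at the nondegenerate point $x$.

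First I would carry out the identification of the two schemes. On $\mathrm{int}(\X)$ one has $W_{\phi}(\xk) = \nabla^2\phi(\xk)^{-1}$, hence $\xk - W_{\phi}(\xk)\,d(\xk) = \xk - \nabla^2\phi(\xk)^{-1}d(\xk) = \A(\xk)$. Applying Lemma \ref{Le_approx_inv_nablaphi} with the perturbation $-d(\xk)$, which is legitimate once $\norm{d(\xk)} \le 1$, gives
\[
\norm{\xkp - \A(\xk)}
= \norm{(\nabla\phi)^{-1}\big(\nabla\phi(\xk) - d(\xk)\big) - \big(\xk - \nabla^2\phi(\xk)^{-1}d(\xk)\big)}
\le L_{\xk, T}\,\norm{d(\xk)}^2 .
\]
Setting $r_k := \xkp - \A(\xk)$, we have $\A(\xk) + r_k = \xkp \in \mathrm{int}(\X) \subseteq \X$, so $\Pi_{\X}(\A(\xk) + r_k) = \{\xkp\}$ and $\xkp \in \Pi_{\X}(\A(\xk) + r_k)$ trivially, which is exactly \eqref{Eq_Feasibility_RestorationQ} for this $Q$ and this error sequence. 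Moreover, for $\xk \in \Omega_x$ the same estimate used in the proof of Lemma \ref{lemQ} gives $\norm{d(\xk)} \le \frac{2 M_{x,c}}{\sigma_{x,Q}}\norm{c(\xk)} \le 1$, so $\norm{r_k} \le L_{\xk, T}\,\frac{4 M_{x,c}^2}{\sigma_{x,Q}^2}\,\norm{c(\xk)}^2$; taking $\xi$ to coincide with $y \mapsto L_{y, T}\,\frac{4 M_{x,c}^2}{\sigma_{x,Q}^2}$ on $\bb{B}(x;\delta_x)$ (and to vanish outside) yields a locally bounded $\xi$ for which \eqref{Eq_Cond_rk} holds along the iterates.

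The remaining point is to close a bootstrap, since the estimates above presuppose $\xk \in \Omega_x \cap \mathrm{int}(\X)$ for every $k$ while it is \eqref{Eq_Cond_rk} that drives Proposition \ref{prop:supconv} to keep the iterates in $\Omega_x$. I would run an induction with base case $x_0 \in \Xi_x \cap \mathrm{int}(\X)$: assuming $x_0, \dots, x_k \in \Omega_x \cap \mathrm{int}(\X)$, the bound on $r_k$ above establishes \eqref{Eq_Cond_rk} at step $k$, so the inductive step in the proof of Proposition \ref{prop:supconv} yields $\xkp \in \Omega_x$, while $\xkp = (\nabla\phi)^{-1}(\nabla\phi(\xk) - d(\xk)) \in \mathrm{int}(\X)$ because the range of $(\nabla\phi)^{-1}$ is $\mathrm{int}(\X)$ and Lemma \ref{Le_approx_inv_nablaphi} (applicable since $\norm{d(\xk)} \le 1$) guarantees $\nabla\phi(\xk) - d(\xk) \in \mathrm{range}(\nabla\phi)$. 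Hence $\{\xk\} \subseteq \Omega_x \cap \mathrm{int}(\X)$ and \eqref{Eq_Cond_rk} holds for all $k$, so $\{\xk\}$ is a bona fide sequence generated by \eqref{Eq_Feasibility_RestorationQ}; Theorem \ref{Theo_quadconv} then shows $\{\xk\}$ is Cauchy, converges to some $x^* \in \K \cap \Omega_x$, and that $\mathrm{dist}(\xk, \K)$ decays quadratically.

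I expect the only genuine obstacle to be precisely this interleaving: one must simultaneously track the well-definedness of the Bregman step ($\xk \in \mathrm{int}(\X)$ and $\nabla\phi(\xk) - d(\xk) \in \mathrm{range}(\nabla\phi)$), the hypothesis $\norm{d(\xk)} \le 1$ required by Lemma \ref{Le_approx_inv_nablaphi}, and the confinement $\xk \in \Omega_x$ supplied by Proposition \ref{prop:supconv}, all inside one induction. Everything else — positive definiteness of $\nabla c(x)\tp W_{\phi}(x)\nabla c(x)$, the bound on $\norm{d(\xk)}$, the quadratic decay of $\norm{c(\cdot)}$, and the transversality error bound converting $\mathrm{dist}(\cdot,\M)$ into $\mathrm{dist}(\cdot,\K)$ — is a direct appeal to results already in hand.
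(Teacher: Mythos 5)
Your proposal is correct and follows essentially the same route as the paper: identify the Bregman step as an instance of \eqref{Eq_Feasibility_RestorationQ} with $Q = W_{\phi}$ (via Proposition \ref{Prop_Hessinv_Projjective}), control the discrepancy $r_k$ between $(\nabla\phi)^{-1}(\nabla\phi(\xk)-d(\xk))$ and $\xk - \nabla^2\phi(\xk)^{-1}d(\xk)$ by Lemma \ref{Le_approx_inv_nablaphi} so that \eqref{Eq_Cond_rk} holds, and then invoke Theorem \ref{Theo_quadconv}. If anything, your explicit induction interleaving $\xk\in\Omega_x\cap\mathrm{int}(\X)$, $\norm{d(\xk)}\le 1$, and the confinement from Proposition \ref{prop:supconv} is more careful than the paper's one-line assertion that the scheme ``follows from'' \eqref{Eq_Feasibility_RestorationQ}.
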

\begin{proof}
    For any $x \in \X$ that satisfies the nondegeneracy condition, and  any $y \in \Omega_x$, let
    \begin{equation*}
        \begin{aligned}
            r_{B}(y) ={}&  \Big(y- \nabla^2 \phi(y)^{-1} d(y)\Big)
                        -  (\nabla \phi)^{-1}\left( \nabla \phi(y) - d(y) \right).
        \end{aligned}
    \end{equation*}
    where $d(y)$ is defined as in \eqref{Eq_AQ_general}.
    Then the update scheme of $\{\xk\}$ generated by \eqref{Eq_Bregman_PG} can be expressed as 
    \begin{equation*}
        \xkp = \Pi_{\X}\Big(\xk - \nabla^2 \phi(\xk)^{-1}d(\xk) - r_{B}(\xk)\Big). 
    \end{equation*}
     It follows from Lemma \ref{Le_approx_inv_nablaphi} that, for any $y \in \Omega_x$, we have
    \begin{equation}
        \label{Eq_Prop_Bregman_PG_Near_0}
        \begin{aligned}
            r_{B}(y) \leq L_{x, T} \norm{d(y)}^2 \leq \frac{4L_{x,T}M_{x, Q}^2M_{x, c}^2}{\sigmaxc^2} \norm{c(y)}^2.
        \end{aligned} 
    \end{equation}
    Therefore, for any iterates generated by \eqref{Eq_Bregman_PG} with $x_0 \in \Xi_{x} \cap \mathrm{int}(\X)$, the update scheme of $\{\xk\}$ follows from \eqref{Eq_Feasibility_RestorationQ} with $r_k = r_B(\xk)$. Moreover, it directly follows from \eqref{Eq_Prop_Bregman_PG_Near_0} that $L_{x, \xi} \leq \frac{4L_{x,T}M_{x, Q}^2M_{x, c}^2}{\sigmaxc^2}$. 
    Therefore, Theorem \ref{Theo_quadconv} demonstrates the local quadratic convergence of $\{\xk\}$. This completes the proof. 
\end{proof}

\section{Numerical experiments}\label{sec:num}

\begin{table}[tb]
	\centering
	\caption{Comparison with alternating projection method for solving \eqref{Example_CorrSparse}.}
	\label{Table_num1_lr1}
	\begin{tabular}{cc|ccc}
		\hline
		&           & iter & feas & time (s) \\ \hline
        \multicolumn{1}{l|}{\multirow{2}{*}{$n=100$}}& APHL   & 27  & 1.79e-12  & 1.66\\
		\multicolumn{1}{l|}{}                      & APM      & 205  & 9.09e-11  & 0.50\\
		\hline
		\multicolumn{1}{l|}{\multirow{2}{*}{$n=200$}}& APHL   & 35  & 1.27e-13  & 2.12\\
		\multicolumn{1}{l|}{}                      & APM      & 328  & 9.41e-11  & 2.19\\
		\hline
        \multicolumn{1}{l|}{\multirow{2}{*}{$n=500$}}& APHL   & 56  & 3.15e-11  & 4.77\\
		\multicolumn{1}{l|}{}                    & APM  & 615  & 9.66e-11  & 25.63\\
		\hline
        \multicolumn{1}{l|}{\multirow{2}{*}{$n=1000$}}& APHL   & 82  & 2.24e-13  & 28.97\\
		\multicolumn{1}{l|}{}                    & APM  & 781  & 9.99e-11  & 159.84\\
		\hline
        \multicolumn{1}{l|}{\multirow{2}{*}{$n=1500$}}& APHL   & 102  & 1.50e-13  & 151.03\\
		\multicolumn{1}{l|}{}                    & APM  & 794  & 9.71e-11  & 427.56\\
		\hline
        \multicolumn{1}{l|}{\multirow{2}{*}{$n=2000$}}& APHL   & 120  & 5.06e-14  & 294.01\\
		\multicolumn{1}{l|}{}                    & APM  & 965  & 9.85e-11  & 1035.04\\
		\hline
	\end{tabular}
\end{table}

\begin{figure}[htbp]
    \centering
    % 第一行 Case 1
    \caption{The performance curves of Algorithm \ref{Alg:Adap_AP2} and APM for solving \eqref{Example_CorrSparse}.}
    \label{fig:cor}
    \subfigure[$n=100$]{
        \begin{minipage}[t]{0.30\textwidth}
            \centering
            \includegraphics[width=\textwidth]{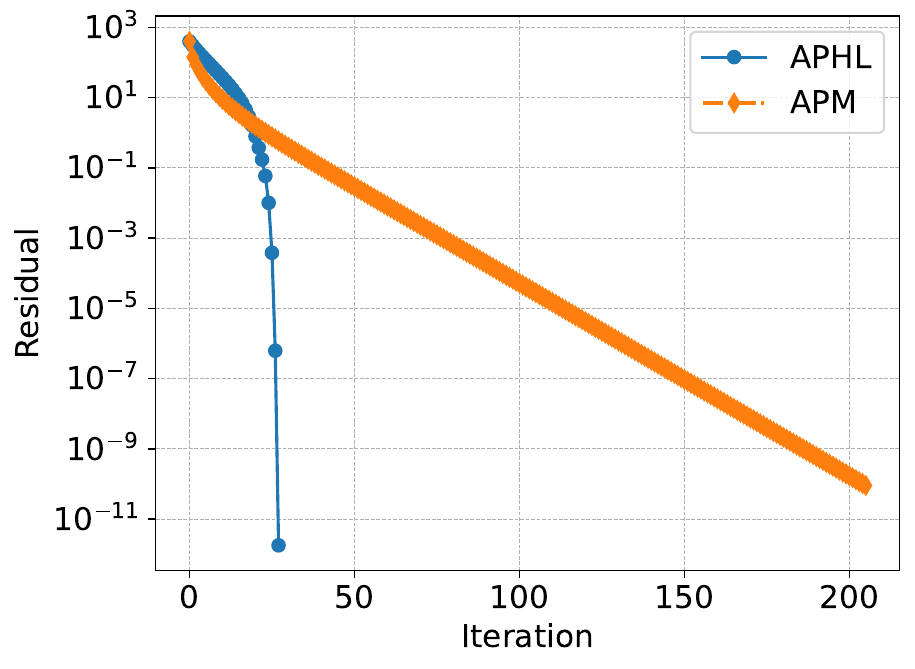}
        \end{minipage}
    }
    \subfigure[$n=200$]{
        \begin{minipage}[t]{0.30\textwidth}
            \centering
            \includegraphics[width=\textwidth]{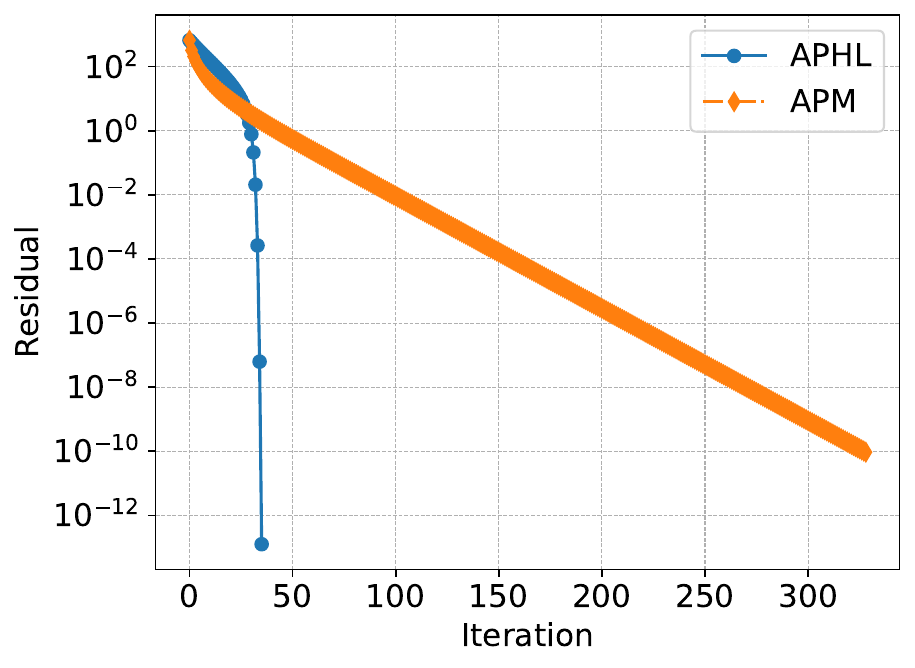}
        \end{minipage}
    }
    \subfigure[$n=500$]{
        \begin{minipage}[t]{0.30\textwidth}
            \centering
            \includegraphics[width=\textwidth]{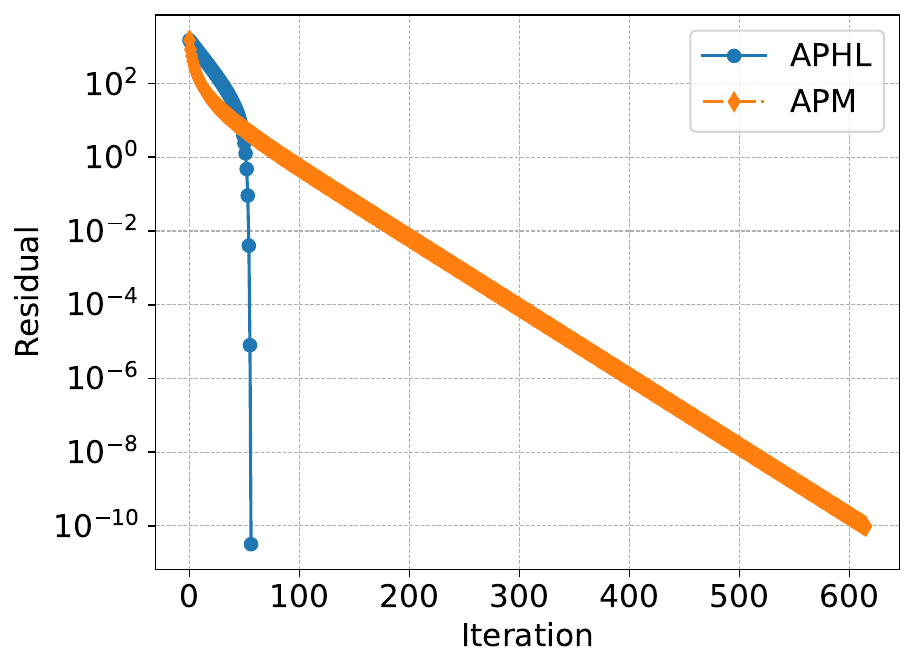}
        \end{minipage}
    }
    
    \subfigure[$n=1000$]{
        \begin{minipage}[t]{0.30\textwidth}
            \centering
            \includegraphics[width=\textwidth]{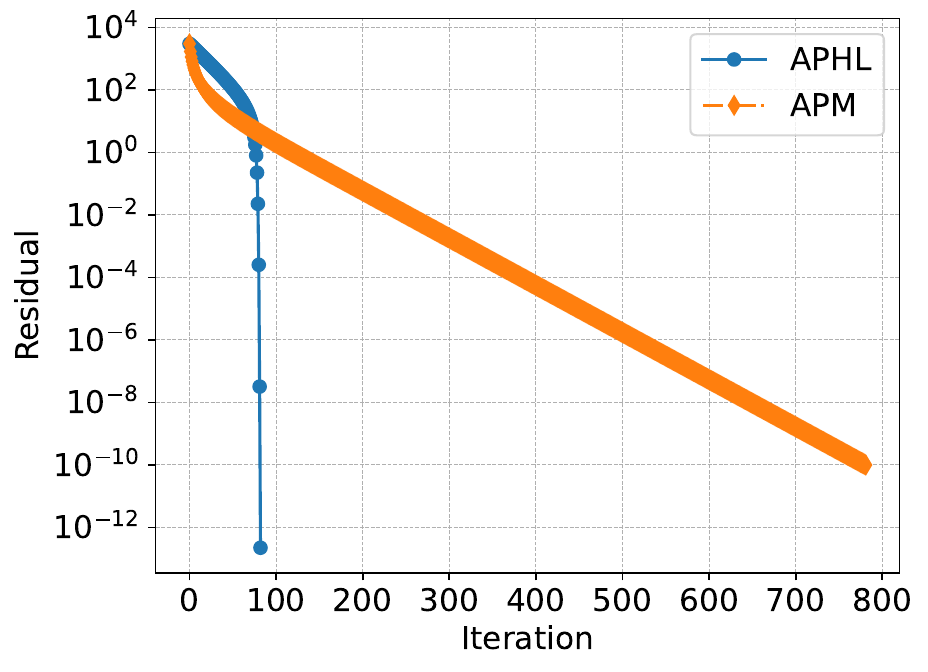}
        \end{minipage}
    }
    \subfigure[$n=1500$]{
        \begin{minipage}[t]{0.30\textwidth}
            \centering
            \includegraphics[width=\textwidth]{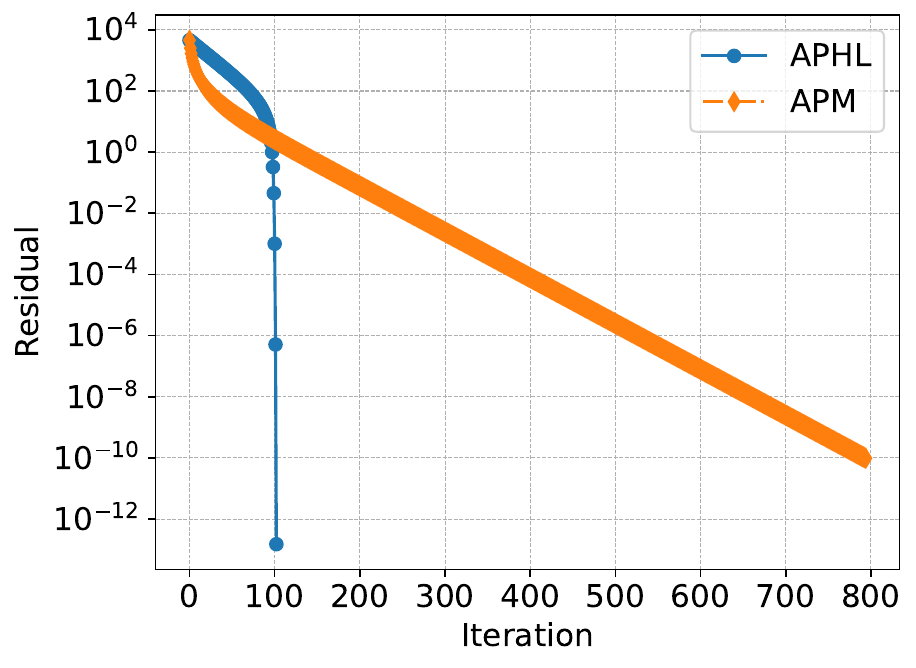}
        \end{minipage}
    }
    \subfigure[$n=2000$]{
        \begin{minipage}[t]{0.30\textwidth}
            \centering
            \includegraphics[width=\textwidth]{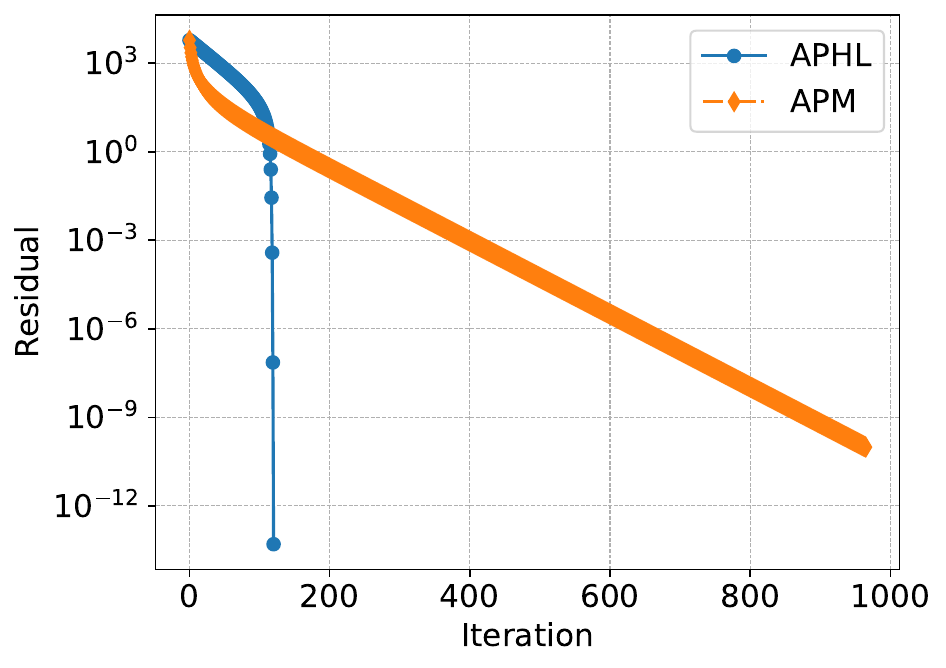}
        \end{minipage}
    }
    
\end{figure}

In this section, several numerical experiments are conducted to show the quadratic convergence rate of our proposed adaptive alternating projection method, which is consistent with the theory established in Section \ref{seca}. Comparisons with existing solvers on specific problems are also presented to exhibit the efficiency and robustness of our proposed algorithm. 
All experiments are conducted in Python 3.13.3 on a macOS server equipped with an M2 chip and 8 GB of RAM. 

In each test instance of our numerical experiments, we set $\eta_{\max} = 1$, $\alpha = 0.7$, and the maximum number of line-search steps as $10$ for Algorithm \ref{Alg:Adap_AP2}. Moreover, we terminate Algorithm \ref{Alg:Adap_AP2} when $\norm{c(\xk)} \leq 10^{-10}$ or the maximum number of iterations exceed $5000$.

%%%%%%%%%%%%%%%%%%%%%%%%%%%%%%%%%
\subsection{Sparse correlation matrix}

In this subsection, we test the performance of Algorithm \ref{Alg:Adap_AP2} on the following sparse correlation matrix problem: 
\begin{equation}
    \label{Example_CorrSparse}
    \mathrm{Find} \quad X \in \bb{S}^n_+, \quad \text{s. t.} \quad {\rm diag}(X)=\mathbf{1}_n, ~\inner{E_{ij}, X} = 0, ~ \forall ~(i,j) \in \ca{I}. 
\end{equation}
Here ${\bf 1}_n$ denotes the $n-$dimensional vector of all ones, $E_{ij}\in\mathbb{R}^{n\times n}$ is the matrix with the $(i,j)$-th entry being 1, and all other entries being $0$, and $\ca{I}$ is a subset of $\{(i,j): i\in [n], j\in [n]\}$. 

In our numerical experiments,  we randomly generate $W_{\mathrm{ref}} \in \bb{R}^{n \times n}$ by the ``sparse.random" function in SciPy with density equals to $0.1$ for $n\leq1000$ and $0.05$ otherwise. Then we normalize the column vectors of $W_{\mathrm{ref}}$ to unit norm (hence $\mathrm{diag}(W_{\mathrm{ref}}\tp W_{\mathrm{ref}}) = \mathbf{1}_n$), and generate the reference point $X_{\mathrm{ref}} = W_{\mathrm{ref}}\tp W_{\mathrm{ref}}$. We choose $\ca{I}$ as the indices of  the zero  entries of $X_{\mathrm{ref}}$. Then we choose the initial point $X_0 = \Phi(10 \cdot \mathrm{randn}(n,n)) + X_{\mathrm{ref}}$. 

We compare the numerical performance of Algorithm \ref{Alg:Adap_AP2} (denote as APHL) with the standard alternating projection method \cite{drusvyatskiy2014alternating} (denote as APM). In the APM method, the iterates are alternatively projected to $\X$ and $\M$, i.e., $\xkp = \Pi_{\M}( \Pi_{\X}(\xk))$. The maximum iteration of APM is set to be $5000$. 

Our numerical results are presented in Table \ref{Table_num1_lr1}, and the corresponding performance curves are presented in Figure \ref{fig:cor}. From these numerical results, we can conclude that both Algorithm \ref{Alg:Adap_AP2} and APM can find a feasible point of \eqref{Example_CorrSparse} with high precision.
More importantly, Algorithm \ref{Alg:Adap_AP2} takes significantly less CPU time than APM to solve the test instances with $n > 500$. 
Moreover, as demonstrated in Figure \ref{fig:cor}, Algorithm \ref{Alg:Adap_AP2} demonstrates a quadratic convergence rate, while APM only demonstrates a linear convergence rate. These empirical performances of Algorithm \ref{Alg:Adap_AP2} are consistent with our theoretical results in Theorem \ref{Theo_quadconv}, and demonstrate the high computational efficiency of Algorithm \ref{Alg:Adap_AP2}.

\subsection{Low-rank variety with constraints}
\label{Subsection_lr}

\begin{table}[tb]
	\centering
    \footnotesize
	\caption{The comparison between Algorithm \ref{Alg:Adap_AP2} and NewtonSLRA for solving \eqref{Example_lr_variety}.}
	\label{Table_num1_lr}
	\begin{tabular}{cc|ccc}
		\hline
		&           & iter & feas & time (s) \\ \hline
		\multicolumn{1}{l|}{\multirow{2}{*}{$(n,m,p,r)=(100,100,500,80)$}}& APHL   & 3  & 7.37e-11  & 0.25\\
		\multicolumn{1}{l|}{}                      & NewtonSLRA      & 3  & 2.36e-12  & 277.06\\
		\hline
        \multicolumn{1}{l|}{\multirow{2}{*}{$(n,m,p,r)=(100,100,10,80)$}}& APHL   & 3  & 9.96e-12  & 0.03\\
		\multicolumn{1}{l|}{}                      & NewtonSLRA      & 2  & 2.52e-12  & 188.43\\
		\hline
        \multicolumn{1}{l|}{\multirow{2}{*}{$(n,m,p,r)=(100,100,200,10)$}}& APHL   & 4  & 2.05e-11  & 0.09\\
		\multicolumn{1}{l|}{}                      & NewtonSLRA & 4  & 1.33e-12  & 18.68      \\
		\hline
        \multicolumn{1}{l|}{\multirow{2}{*}{$(n,m,p,r)=(1000,100,500,80)$}}& APHL   & 6  & 7.87e-11  & 2.87\\
		\multicolumn{1}{l|}{}                      & NewtonSLRA & -  & -  & -      \\
		\hline
        \multicolumn{1}{l|}{\multirow{2}{*}{$(n,m,p,r)=(1000,100,500,10)$}}& APHL   & 7  & 3.49e-11  & 4.37\\
		\multicolumn{1}{l|}{}                      & NewtonSLRA & -  & -  & -      \\
		\hline
        \multicolumn{1}{l|}{\multirow{2}{*}{$(n,m,p,r)=(100,1000,50,80)$}}  & APHL & 6  & 1.54e-11  & 0.59\\
		\multicolumn{1}{l|}{}                      & NewtonSLRA & -  & -  & -      \\
		\hline
        \multicolumn{1}{l|}{\multirow{2}{*}{$(n,m,p,r)=(1000,1000,50,900)$}}  & APHL & 8  & 2.43e-12  & 20.63\\
		\multicolumn{1}{l|}{}                      & NewtonSLRA & -  & -  & -      \\
		\hline
        \multicolumn{1}{l|}{\multirow{2}{*}{$(n,m,p,r)=(1000,1000,50,100)$}}  & APHL & 9  & 1.62e-12  & 6.11\\
		\multicolumn{1}{l|}{}                      & NewtonSLRA & -  & -  & -      \\
		\hline
        \multicolumn{1}{l|}{\multirow{2}{*}{$(n,m,p,r)=(1000,1000,500,100)$}}  & APHL & 9  & 3.24e-11  & 57.46\\
		\multicolumn{1}{l|}{}                      & NewtonSLRA & -  & -  & -      \\
		\hline
	\end{tabular}
\end{table}

\begin{figure}[htbp]
    \centering
    % 第一行 Case 1
    \caption{The performance curves of Algorithm \ref{Alg:Adap_AP2} and NewtonSLRA for solving \eqref{Example_lr_variety}.}
    \label{fig:lr}
    \subfigure[$(n,m,p,r)=(100,100,500,80)$]{
        \begin{minipage}[t]{0.30\textwidth}
            \centering
            \includegraphics[width=\textwidth]{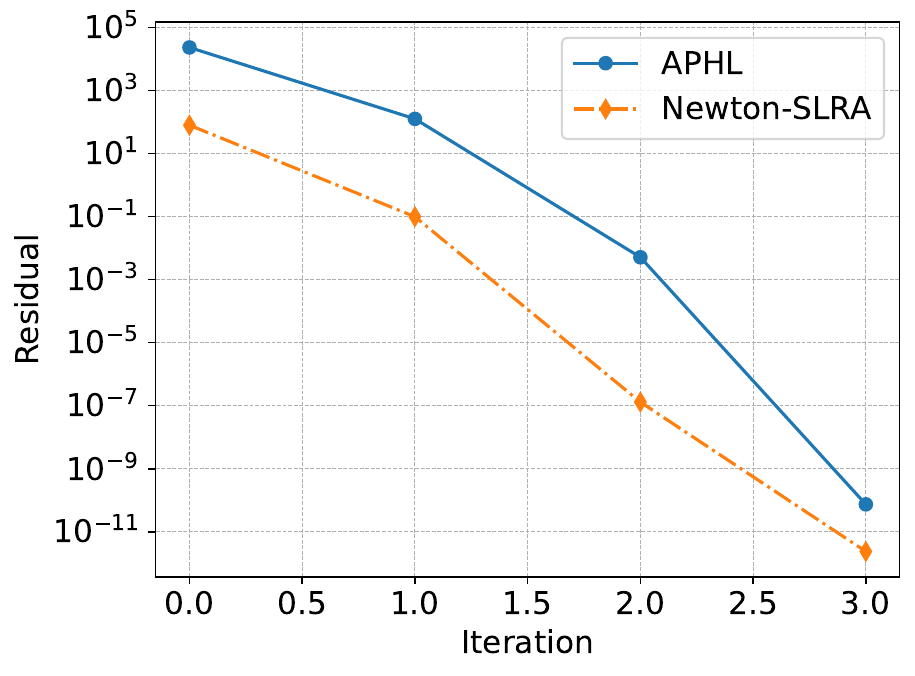}
        \end{minipage}
    }
    \subfigure[$(n,m,p,r)=(100,100,10,80)$]{
        \begin{minipage}[t]{0.30\textwidth}
            \centering
            \includegraphics[width=\textwidth]{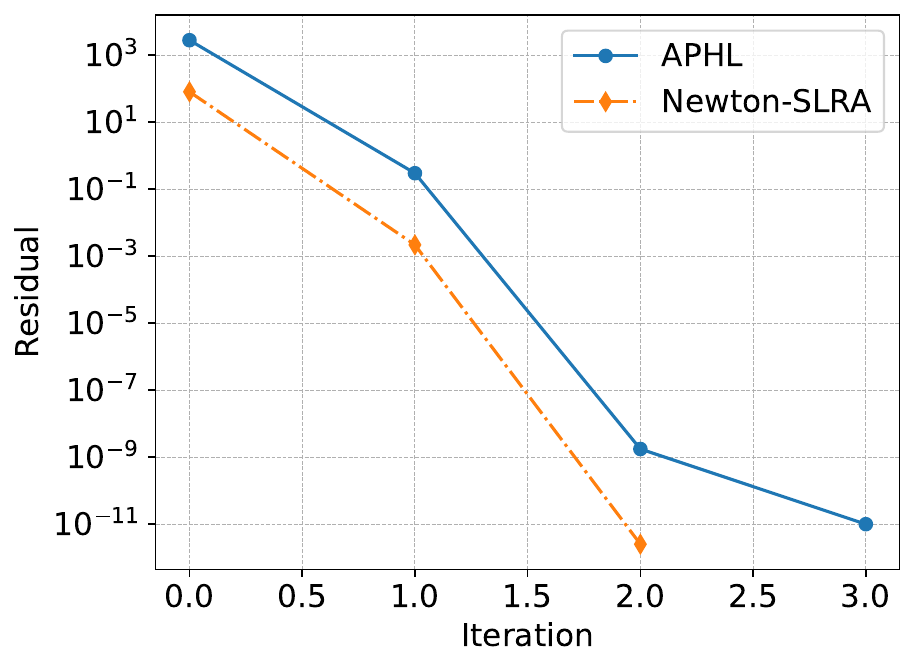}
        \end{minipage}
    }
    \subfigure[$$(n,m,p,r)=(100,100,200,10)$$]{
        \begin{minipage}[t]{0.30\textwidth}
            \centering
            \includegraphics[width=\textwidth]{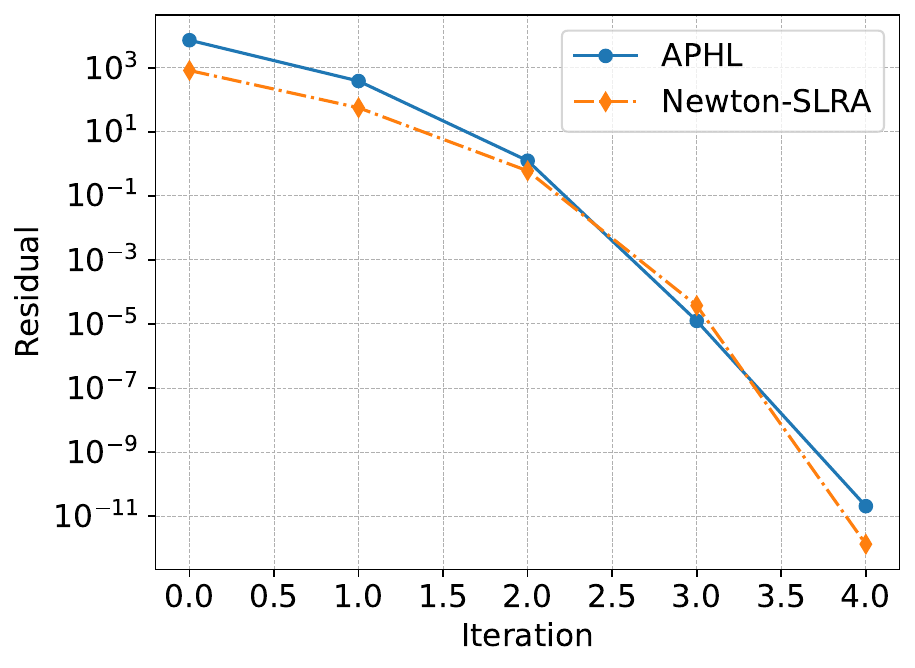}
        \end{minipage}
    }
    
    \subfigure[$(n,m,p,r)=(1000,100,500,80)$]{
        \begin{minipage}[t]{0.30\textwidth}
            \centering
            \includegraphics[width=\textwidth]{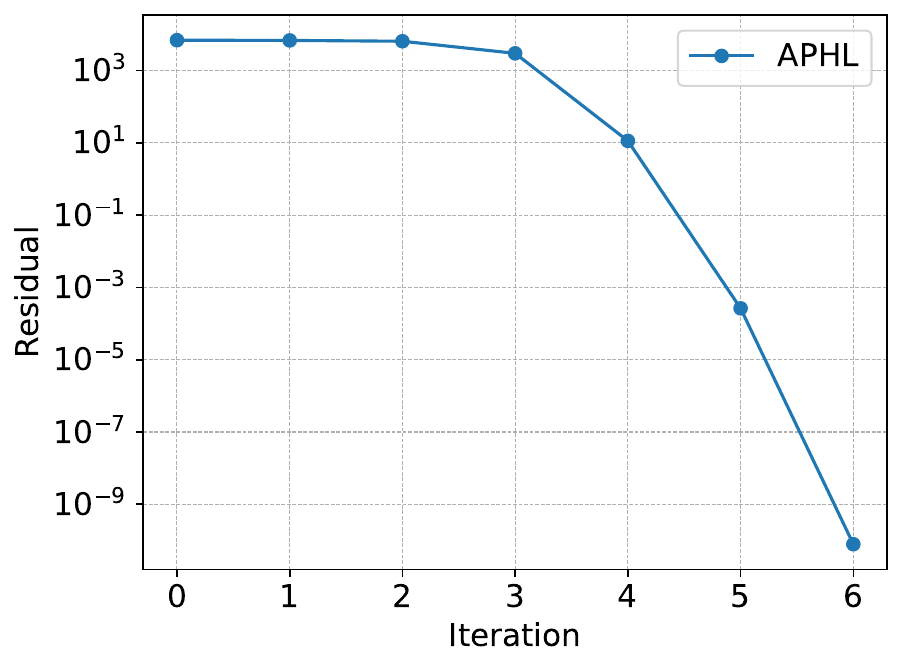}
        \end{minipage}
    }
    \subfigure[$$(n,m,p,r)=(1000,100,500,10)$$]{
        \begin{minipage}[t]{0.30\textwidth}
            \centering
            \includegraphics[width=\textwidth]{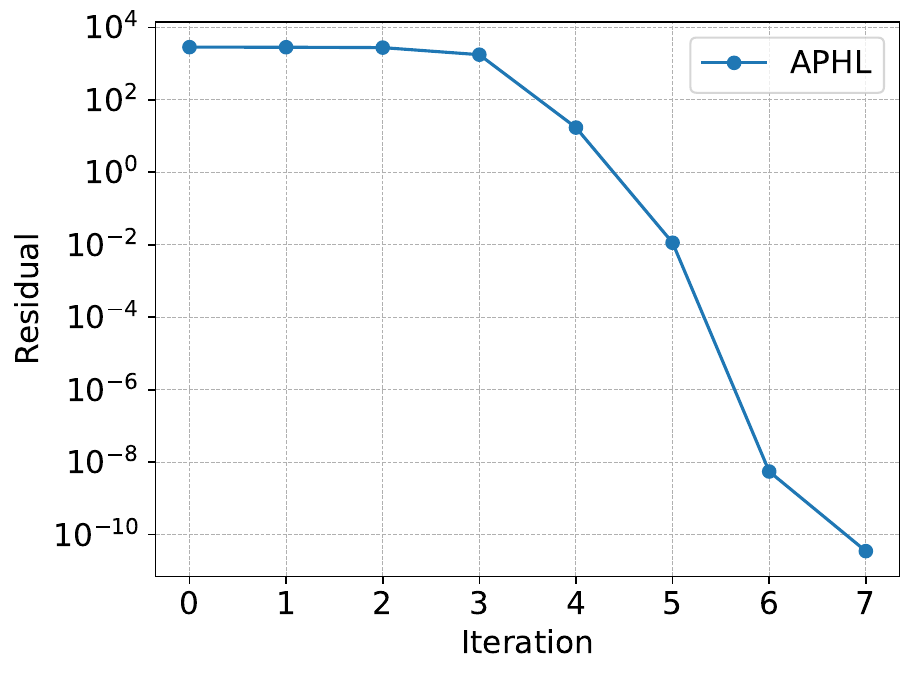}
        \end{minipage}
    }
    \subfigure[$$(n,m,p,r)=(100,1000,50,80)$$]{
        \begin{minipage}[t]{0.30\textwidth}
            \centering
            \includegraphics[width=\textwidth]{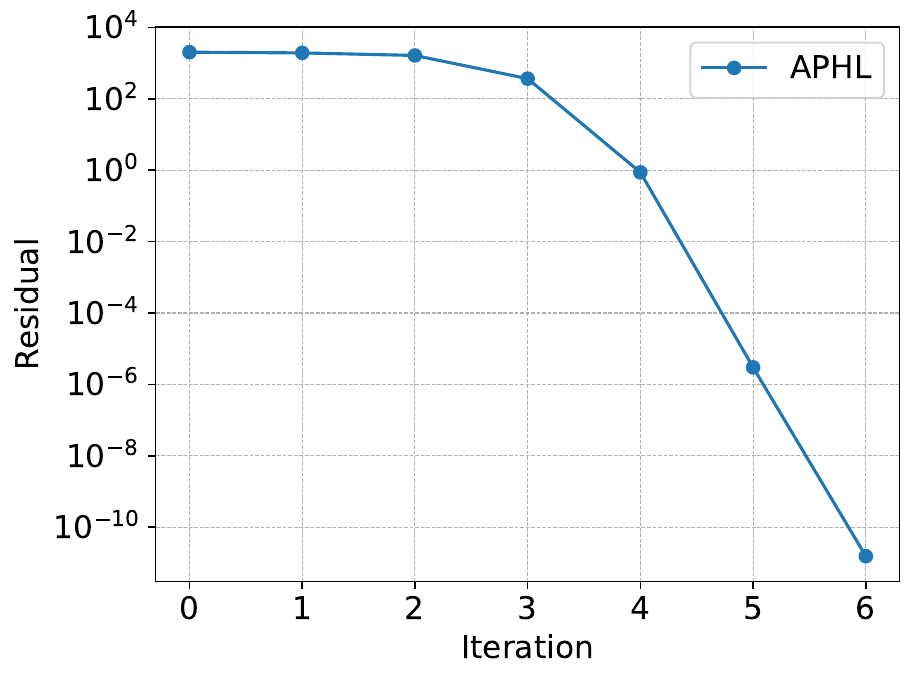}
        \end{minipage}
    }

    \subfigure[$(n,m,p,r)=(1000,1000,50,900)$]{
        \begin{minipage}[t]{0.30\textwidth}
            \centering
            \includegraphics[width=\textwidth]{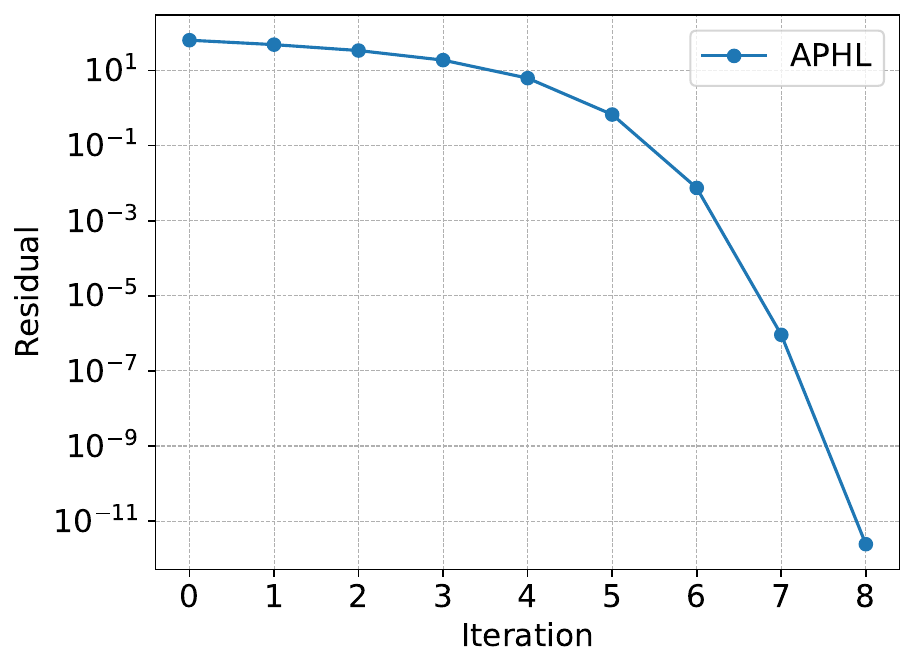}
        \end{minipage}
    }
    \subfigure[$$(n,m,p,r)=(1000,1000,50,100)$$]{
        \begin{minipage}[t]{0.30\textwidth}
            \centering
            \includegraphics[width=\textwidth]{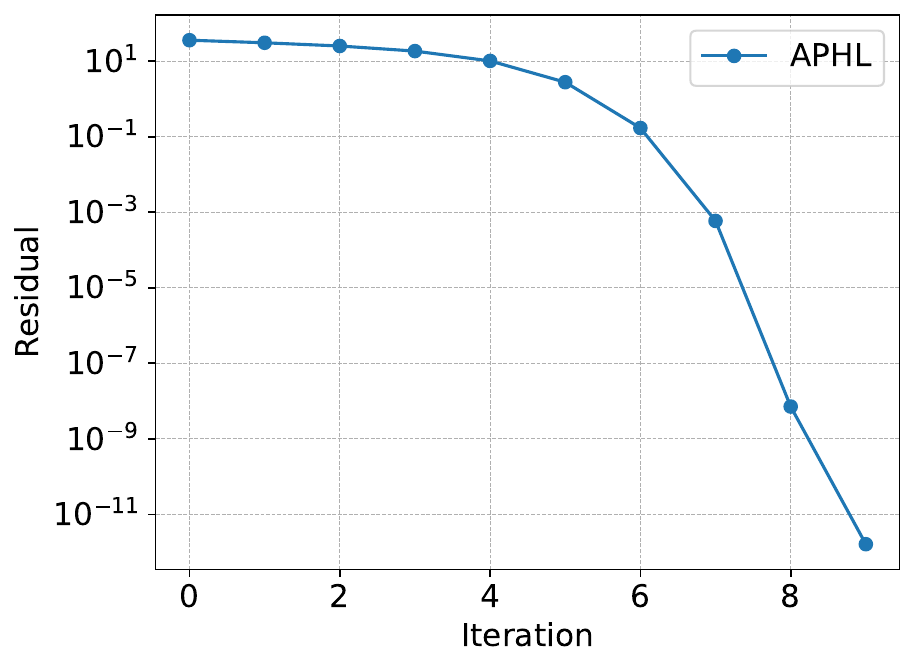}
        \end{minipage}
    }
    \subfigure[$$(n,m,p,r)=(1000,1000,500,100)$$]{
        \begin{minipage}[t]{0.30\textwidth}
            \centering
            \includegraphics[width=\textwidth]{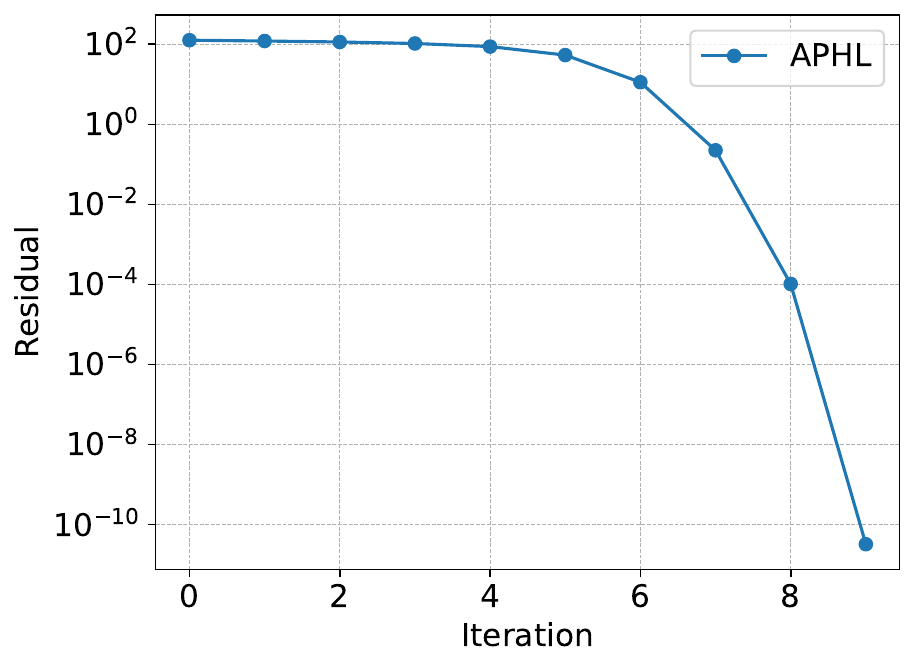}
        \end{minipage}
    }
    
\end{figure}

In this subsection, we consider the feasibility problem of the intersection of a low-rank variety and an affine space, which takes the following formulation:
\begin{equation}
    \label{Example_lr_variety}
    \mathrm{Find}\quad X \in  \M\cap{\cal D}_r,
\end{equation}
where ${\cal D}_r: \{X \in \bb{R}^{n\times m}: \mathrm{rank}(X) \leq r\}$. Moreover,  $\M=\{X\in\mathbb{R}^{n\times m}: \langle H_i,X\rangle=b_i, i \in [p]\}$ is an affine space in $\mathbb{R}^{n\times m}$, where 
$\{H_i: i\in [p]\}$ are matrices in $\bb{R}^{n \times m}$ and 
$b_i\in \bb{R}\;\forall\; i \in [p]$. 

We randomly generate the  matrices $\{H_i: i\in [p]\}$ by $H_i = \mathrm{numpy.random.randn}(n,m)$. 
To generate a feasible reference point $X_{\mathrm{ref}} \in \ca{D}_r$, we first randomly generate $W_{\rm{ref}} = \mathrm{numpy.random.randn}(n,m)$, and compute its singular value decomposition $W_{\rm{ref}} = U_{\rm{ref}} \Sigma_{\rm{ref}} V_{\rm{ref}}\tp$. Then the reference point $X_{\rm{ref}}$ is computed by $X_{\rm{ref}} = U_{\rm{ref}} \max\{\Sigma_{\rm{ref}} , 0\}V_{\rm{ref}}\tp$. Additionally, to enforce the nonempty-ness of $\M\cap{\cal D}_r$, we set $b_i = \inner{H_i, X_{\rm{ref}}}$ for $i \in [p]$. 

To better evaluate the numerical performance of Algorithm \ref{Alg:Adap_AP2}, we implement the NewtonSLRA solver \cite[Algorithm 2]{schost2016quadratically} for comparison. The maximum iteration of NewtonSLRA is set to be $300$, while all the other parameters are set as their default values as in \cite{schost2016quadratically}. Moreover, NewtonSLRA is set to stop when $\norm{\Pi_{{\cal D}_r}(X_k)}<10^{-10}$. 

It is worth noting that the initial point $X_0$ of NewtonSLRA needs to stay within $\M$ to guarantee that the whole sequence belongs to $\M$. Conversely, Algorithm \ref{Alg:Adap_AP2} requires the initial point $X_0 \in \D_r$. As a result, in the numerical experiments, we first randomly generate $\hat{X}_0$ by $\hat{X}_0 = \mathrm{numpy.random.randn}(n,m)$. Then we set $\Pi_{\M}(\hat{X}_0)$ as the initial point of NewtonSLRA, and set $\Pi_{\D_r}(\hat{X}_0)$ as the initial point of Algorithm \ref{Alg:Adap_AP2}.

The numerical results are presented in Table \ref{Table_num1_lr}, while the performance curves of Algorithm \ref{Alg:Adap_AP2} and NewtonSLRP are presented in Figure \ref{fig:lr}. It can be concluded from these results that although NewtonSLRA takes slightly fewer iterations than Algorithm \ref{Alg:Adap_AP2}, its per-iteration costs are significantly higher than those of Algorithm \ref{Alg:Adap_AP2}. 

Moreover, in the last six examples in Table \ref{Table_num1_lr} 
with large $n$ or $m$, NewtonSLRA could fail because of out of memory. More precisely, we employ the ``null space" function in NumPy to compute a basis of $\M$, and that function fails when $n$ or $m$ exceeds $1000$. Conversely, Algorithm \ref{Alg:Adap_AP2} demonstrates robust and efficient performance throughout all the test instances. These numerical results further demonstrate the advantages of Algorithm \ref{Alg:Adap_AP2} over the existing solver NewtonSLRA.

%%%%%%%%%%%%%%%%%%%%%%%%%%%%%%
\subsection{Quadratic programming with non-negative constraints}
In this subsection, we consider the following feasibility problem 
of the intersection of the nonnegative cone and a finite number of 
possibly nonconvex quadratic constraints:
\begin{equation}
    \label{Example_QP}
    \mathrm{Find} ~ x \in \bb{R}^n_+, \quad \inner{x, H_i x} = b_i, ~ \forall i \in [p].  
\end{equation}
Here $\{H_i: i \in [p]\}$ are symmetric matrices and $b_i\in \bb{R} \;\forall\; i \in [p]$. Then \eqref{Example_QP} fits into \eqref{Prob_NCF} with $\M=\{x\in\mathbb{R}^{n}\mid x^\top H_ix=b_i,\,i=1,\dots,p\}$ and $\X=\{x\in\mathbb{R}^{n}\mid x\geq0\}$. The generation of $X_{\rm{ref}}$, 
$\{H_i\}$ and $\{b_i\}$ are similar to those in Section \ref{Subsection_lr}. Furthermore, the initial point is chosen as $X_0 = X_{\rm{ref}} + 0.1 * \mathrm{random.randn}(n,1)$. Furthermore, as the projection onto $\M$ is generally intractable, we only test the numerical performance of Algorithm \ref{Alg:Adap_AP2}
but not the standard alternating projection  method. 

The numerical experiments are presented in Table \ref{Table_num_QP}, and the corresponding performance curves are presented in Figure \ref{fig:QP}. From these figures, we can observe that Algorithm \ref{Alg:Adap_AP2} is able to solve \eqref{Example_QP} to high precision with a small number of iterations. Moreover, Algorithm \ref{Alg:Adap_AP2} achieves quadratic convergence rate in solving \eqref{Example_QP}. 

\begin{table}[tb]
	\centering
	\caption{The numerical performance of Algorithm \ref{Alg:Adap_AP2} for solving \eqref{Example_QP}.}
	\label{Table_num_QP}
	\begin{tabular}{cc|ccc}
		\hline
		&           & iter & feas & time (s) \\ \hline
		\multicolumn{1}{l|}{\multirow{2}{*}{$n=100$}}& $p=10$   & 4  & 4.74e-14  & 0.01\\
		\multicolumn{1}{l|}{}                      & $p=50$      & 5  & 9.97e-14  & 0.13\\
        %\multicolumn{1}{l|}{} & $p=400$       & 9  & 4.04e+01  & 118.73\\
		\hline
        \multicolumn{1}{l|}{\multirow{3}{*}{$n=500$}}& $p=10$   & 3  & 7.60e-13  & 0.10\\
		\multicolumn{1}{l|}{}                      & $p=100$     & 4  & 2.38e-12  & 2.80\\
        \multicolumn{1}{l|}{}                      & $p=250$      & 5  & 3.37e-12  & 41.81 \\
		\hline
        \multicolumn{1}{l|}{\multirow{3}{*}{$n=1000$}}& $p=10$   & 3  & 2.41e-12  & 0.34\\
		\multicolumn{1}{l|}{}                      & $p=100$     & 4  & 9.65e-12  & 6.51\\
        \multicolumn{1}{l|}{}                      & $p=500$     & 5  & 1.65e-11  & 653.91\\
		\hline
	\end{tabular}
\end{table}

\begin{figure}[htbp]
    \centering
    % 第一行 Case 1
    \caption{The performance curves of Algorithm \ref{Alg:Adap_AP2} for solving \eqref{Example_QP}.}
    \label{fig:QP}
    \subfigure[$n=500, p=10$]{
        \begin{minipage}[t]{0.30\textwidth}
            \centering
            \includegraphics[width=\textwidth]{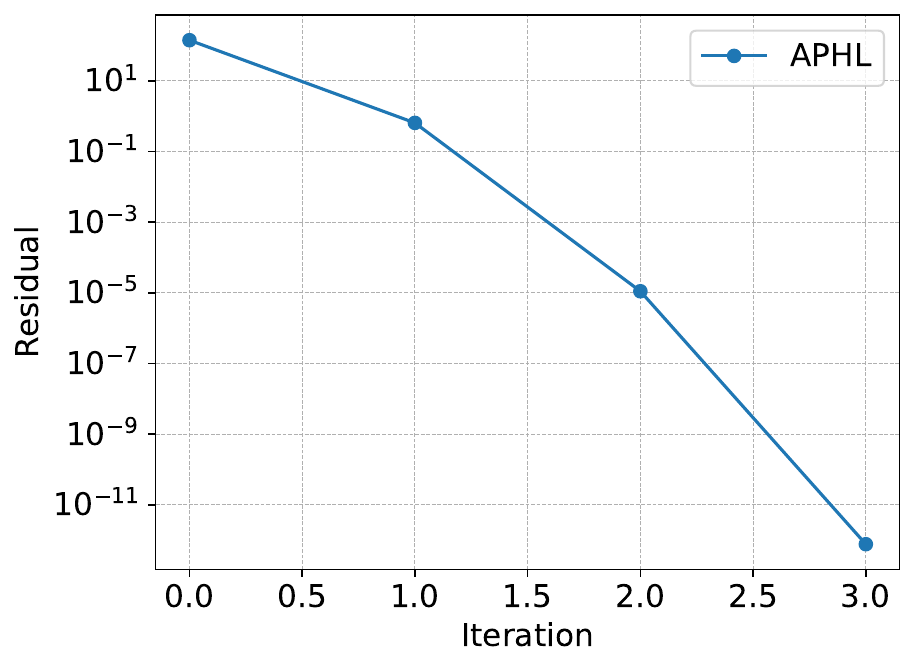}
        \end{minipage}
    }
    \subfigure[$n=500, p=100$]{
        \begin{minipage}[t]{0.30\textwidth}
            \centering
            \includegraphics[width=\textwidth]{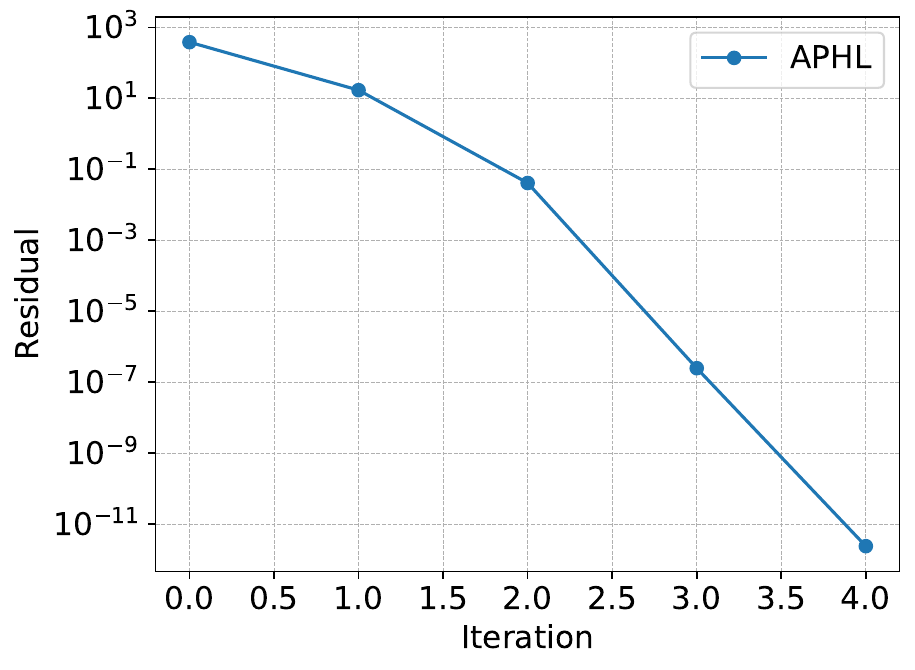}
        \end{minipage}
    }
    \subfigure[$n=500, p=250$]{
        \begin{minipage}[t]{0.30\textwidth}
            \centering
            \includegraphics[width=\textwidth]{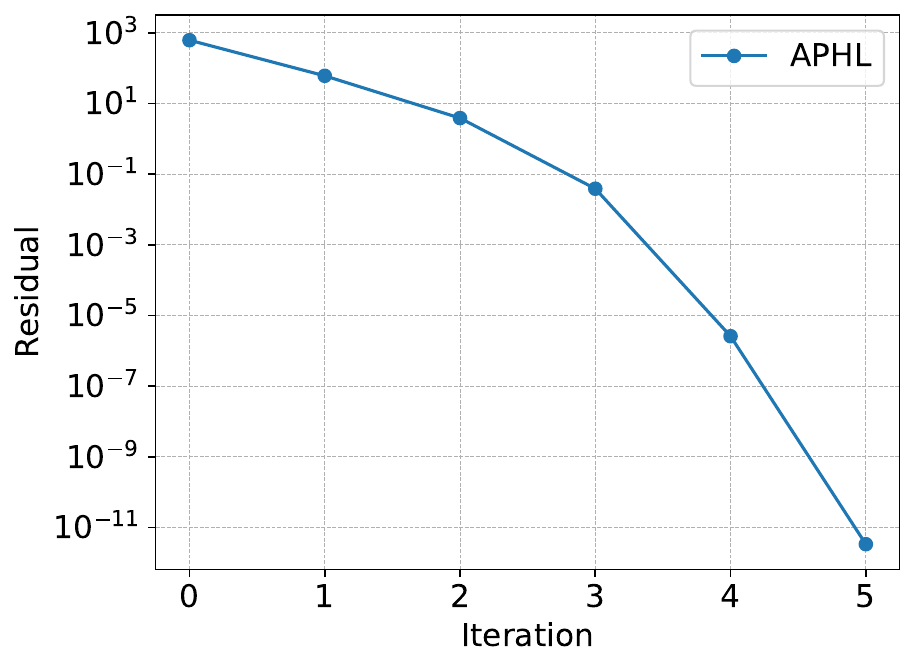}
        \end{minipage}
    }
    
    \subfigure[$n=1000, p=10$]{
        \begin{minipage}[t]{0.30\textwidth}
            \centering
            \includegraphics[width=\textwidth]{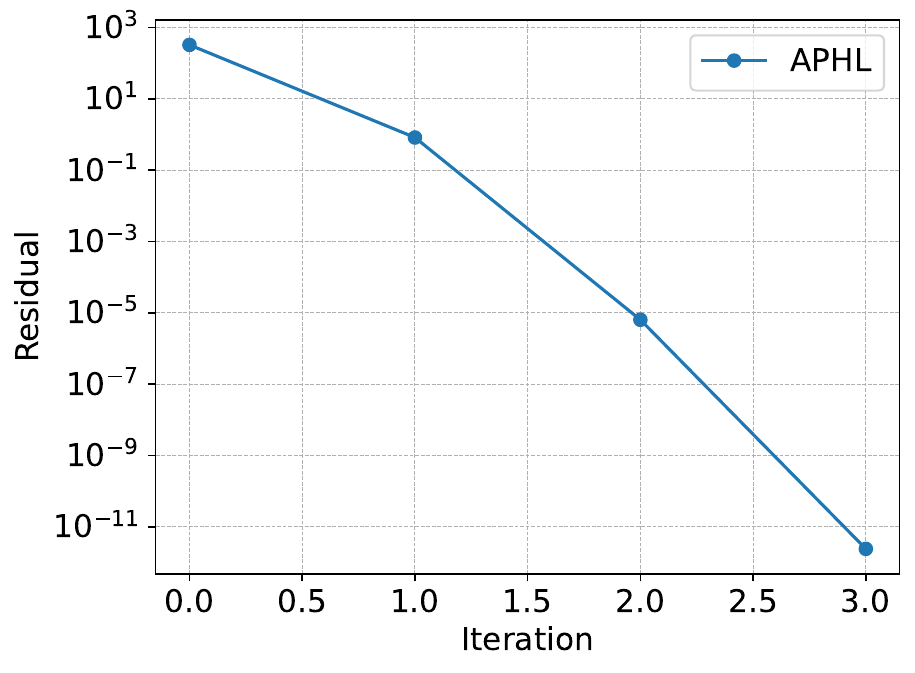}
        \end{minipage}
    }
    \subfigure[$n=1000, p=100$]{
        \begin{minipage}[t]{0.30\textwidth}
            \centering
            \includegraphics[width=\textwidth]{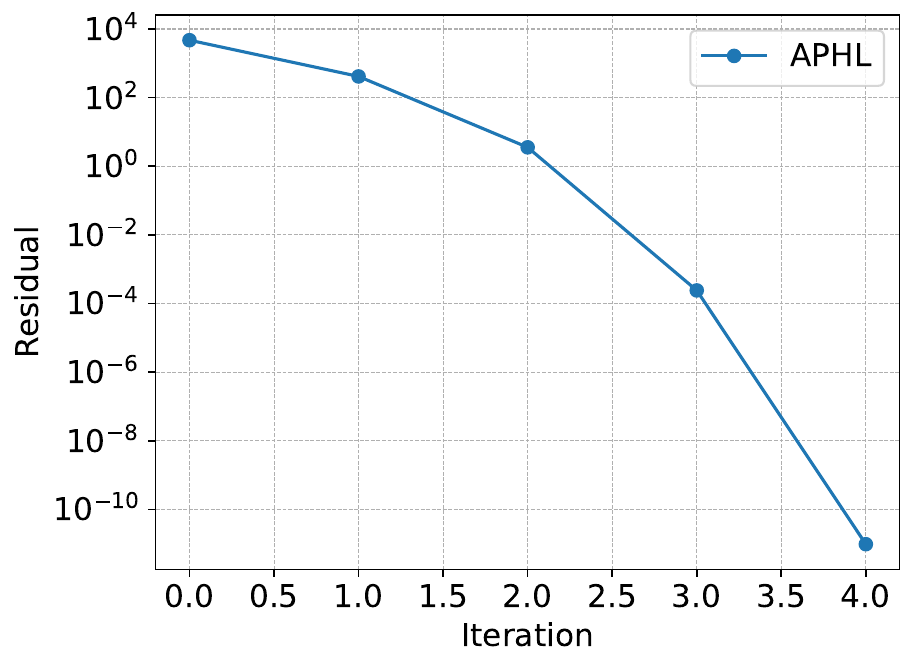}
        \end{minipage}
    }
    \subfigure[$n=1000, p=500$]{
        \begin{minipage}[t]{0.30\textwidth}
            \centering
            \includegraphics[width=\textwidth]{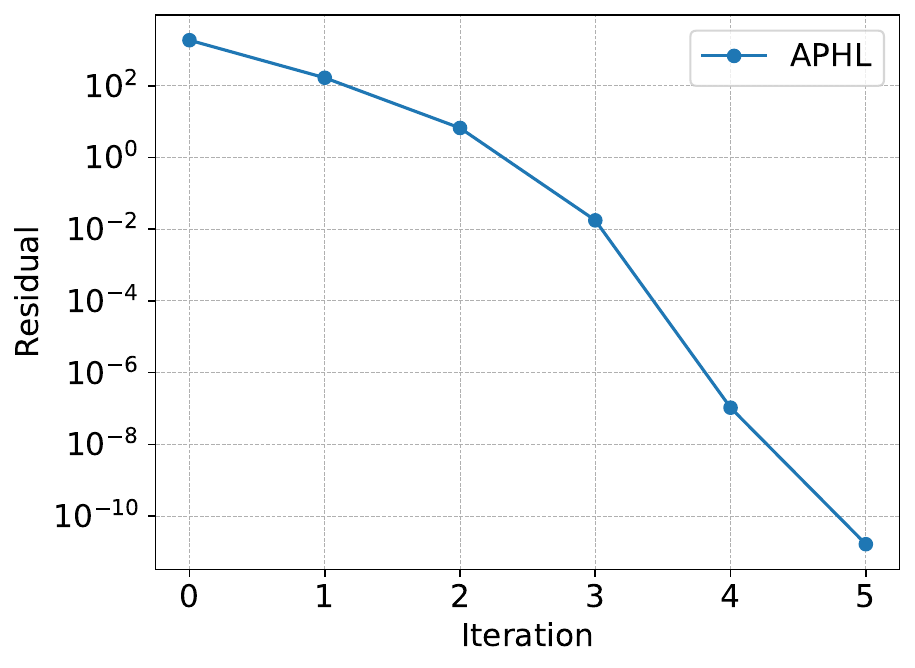}
        \end{minipage}
    }
    
\end{figure}

\subsection{$l_q$-norm ball with quadratic constraint}
 In this subsection, we consider the feasibility problem of the intersection of a $\ell_{1/2}$-norm ball and an affine space, which takes the following formulation:
 \begin{equation}
     \label{Example_ell_12}
     \mathrm{Find} ~ x \in \Rn, \text{s. t.} \quad \norm{x}_{1/2} \leq 1, ~ \langle x, H\tp x\rangle +B^T x = b. 
 \end{equation}
 Here $\norm{x}_{1/2} := (\sum_{i \in[n]} \sqrt{|x_i|})^2$ refers to the $\ell_{1/2}$-norm of a given vector, $H, B \in \bb{R}^{n \times p}$ and $b \in \Rp$. Then \eqref{Example_ell_12} fits into \eqref{Prob_NCF} with  $\M=\{x\in\Rn: \langle x, H\tp x\rangle +B^T x = b\}$ and
 $\X=\{x\in\mathbb{R}^n: \norm{x}_{1/2}\leq 1\}$, where $\X$ is non-regular \cite{clarke1990optimization}.  

 In our numerical experiments, we generate the reference point $x_{\rm{ref}} = \Pi_{\X}(\mathrm{random.randn}(n,1))$. Additionally, we generate  $H = \mathrm{random.randn}(n,p)$ and set $b = H\tp x_{\rm{ref}}$. Moreover, the initial point $x_0 = x_{\rm{ref}} + 10^{-5} * \mathrm{random.randn}(n,1)$. Additionally, we adopt the iterative reweighted projection method in \cite{yang2022towards} to compute $\Pi_\X$.
 
The numerical results are presented in Table \ref{Table_num_lq} and Figure \ref{fig:i12}. From these numerical results, we can observe that Algorithm \ref{Alg:Adap_AP2} successfully solves all the feasibility problems to high accuracy within a moderate number of iterations. Moreover, Algorithm \ref{Alg:Adap_AP2} demonstrates quadratic convergence rate in solving \eqref{Example_ell_12}, which coincides with the results presented in Theorem \ref{Theo_quadconv}. These results further demonstrate the high efficiency of Algorithm \ref{Alg:Adap_AP2} in solving \eqref{Prob_NCF} even
with a non-regular $\X$.

\begin{rmk}
    For a general constrained optimization problem, computing its first-order stationary points can be transformed into a feasibility problem based on its first-order optimality condition. However, as the relative interior of the set of first-order stationary points is tiny or of zero measure,  the nondegeneracy condition may fail for these feasibility problems,  and as a result, alternating projection methods are not efficient in directly solving these feasibility problems. How to improve the performance of alternating projection methods in solving these variants of feasibility problems is an interesting topic for future research. 
\end{rmk}

\begin{table}[tb]
	\centering
	\caption{The numerical performance of Algorithm \ref{Alg:Adap_AP2} for solving \eqref{Example_ell_12}.}
	\label{Table_num_lq}
	\begin{tabular}{cc|ccc}
		\hline
		&           & iter & feas & time (s) \\ \hline
		\multicolumn{1}{l|}{\multirow{2}{*}{$n=100$}}& $p=10$   & 2  & 3.93e-14  & 0.01\\
		\multicolumn{1}{l|}{}                      & $p=50$      & 6  & 3.59e-16  & 0.10\\
		\hline
        \multicolumn{1}{l|}{\multirow{3}{*}{$n=500$}}& $p=10$   & 61  & 4.37e-12  & 3.40\\
		\multicolumn{1}{l|}{}                      & $p=100$     & 247  & 1.79e-14  & 44.24\\
         \multicolumn{1}{l|}{}  & $p=200$      & 885  & 3.80e-13  & 380.96\\
		\hline

        \multicolumn{1}{l|}{\multirow{3}{*}{$n=1000$}}& $p=10$   & 43  & 1.15e-11  & 6.69\\
		\multicolumn{1}{l|}{}                      & $p=100$     & 319  & 4.87e-12  & 196.16\\
        \multicolumn{1}{l|}{}                      & $p=200$     & 706  & 2.41e-13  & 1280.19\\
		\hline
	\end{tabular}
\end{table}

\begin{figure}[htbp]
    \centering
    % 第一行 Case 1
    \caption{The performance curves of Algorithm \ref{Alg:Adap_AP2} for solving \eqref{Example_ell_12}.}
    \label{fig:i12}
    \subfigure[$n=500, p=10$]{
        \begin{minipage}[t]{0.30\textwidth}
            \centering
            \includegraphics[width=\textwidth]{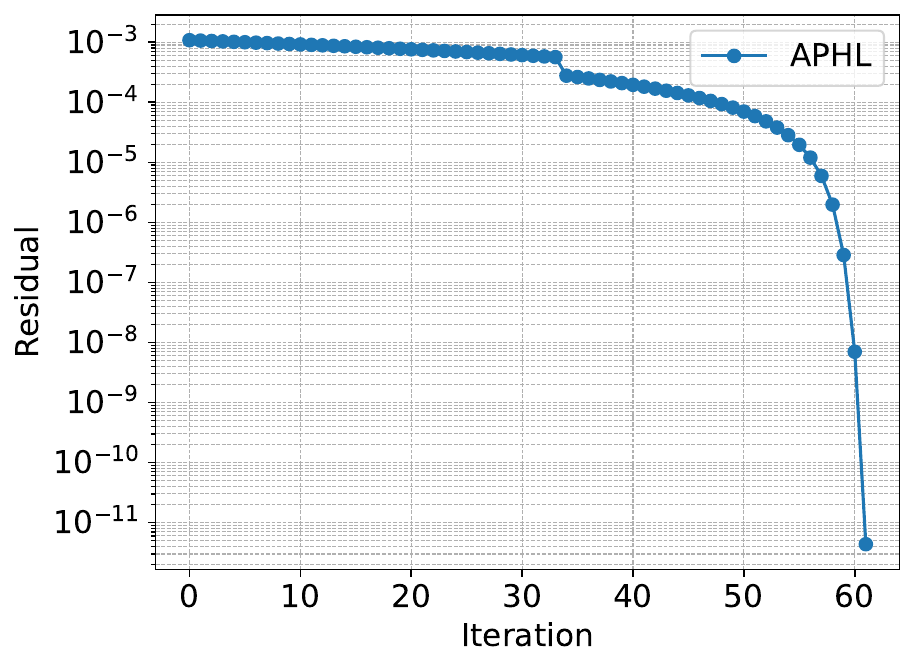}
        \end{minipage}
    }
    \subfigure[$n=500, p=100$]{
        \begin{minipage}[t]{0.30\textwidth}
            \centering
            \includegraphics[width=\textwidth]{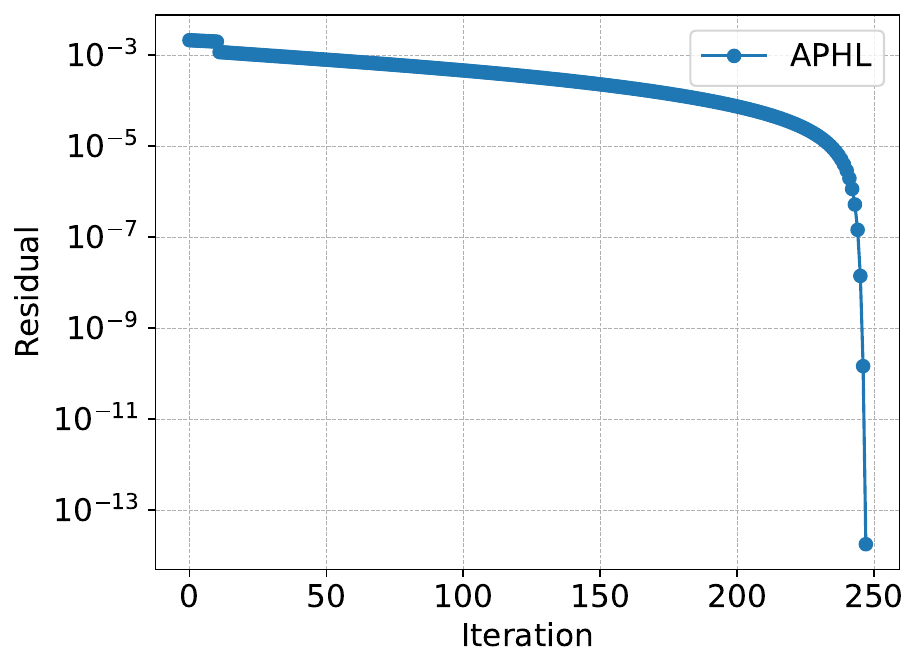}
        \end{minipage}
    }
    \subfigure[$n=500, p=200$]{
        \begin{minipage}[t]{0.30\textwidth}
            \centering
            \includegraphics[width=\textwidth]{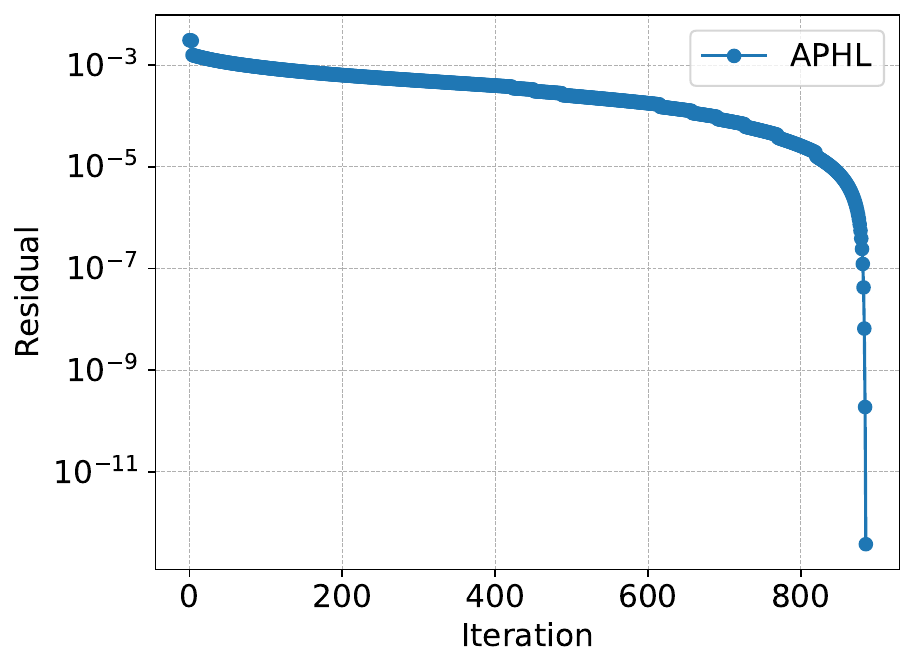}
        \end{minipage}
    }
    
    \subfigure[$n=1000, p=10$]{
        \begin{minipage}[t]{0.30\textwidth}
            \centering
            \includegraphics[width=\textwidth]{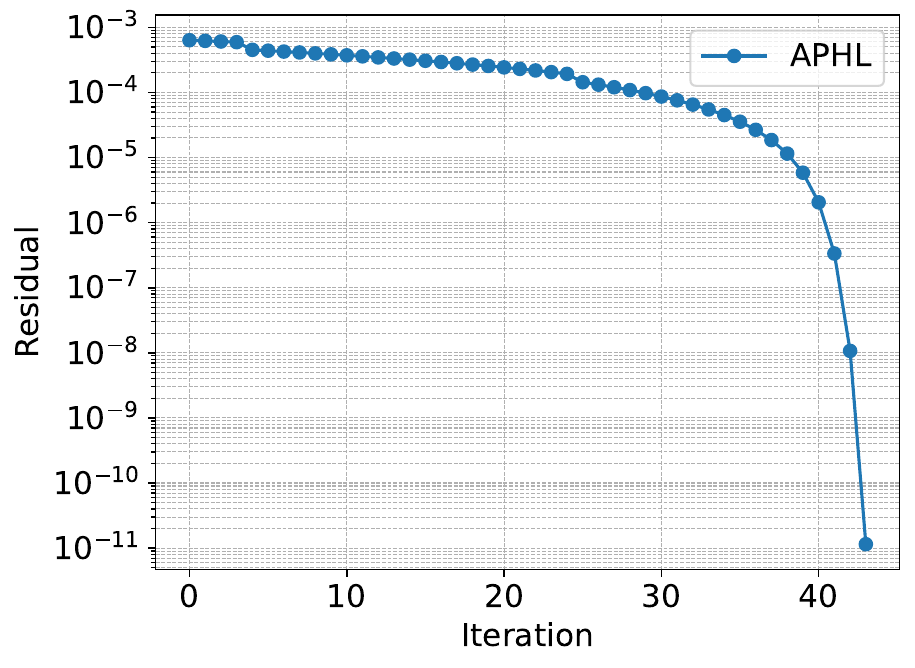}
        \end{minipage}
    }
    \subfigure[$n=1000, p=100$]{
        \begin{minipage}[t]{0.30\textwidth}
            \centering
            \includegraphics[width=\textwidth]{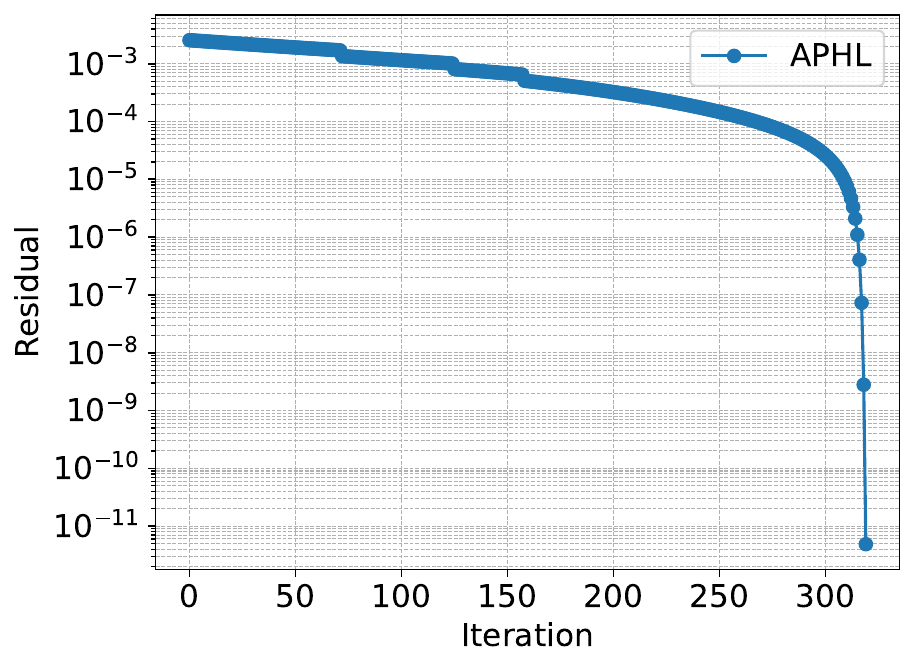}
        \end{minipage}
    }
    \subfigure[$n=1000, p=200$]{
        \begin{minipage}[t]{0.30\textwidth}
            \centering
            \includegraphics[width=\textwidth]{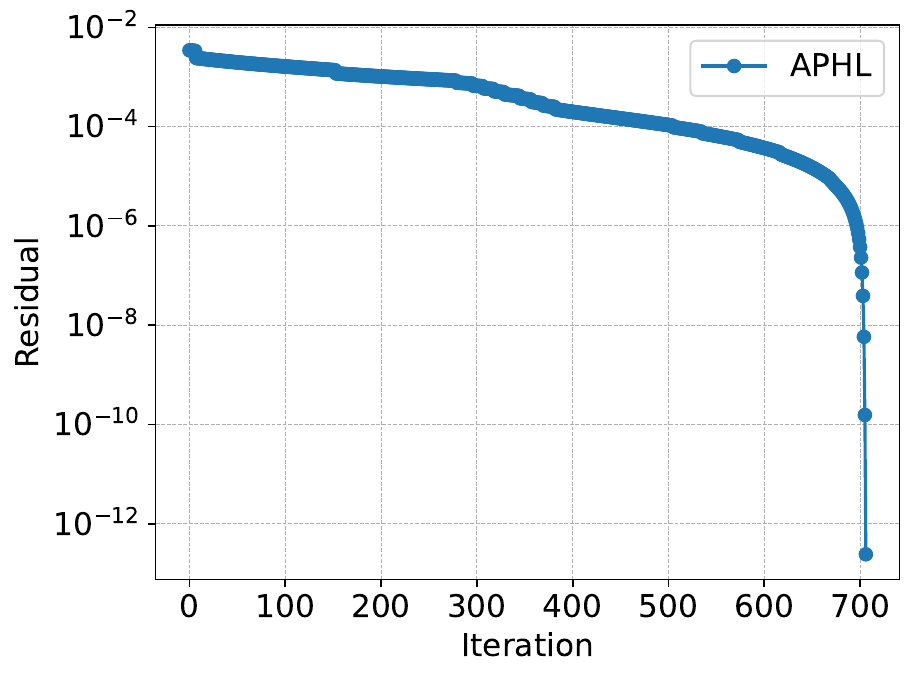}
        \end{minipage}
    }

\end{figure}

\section{Conclusion}
In this paper, we consider the nonconvex feasibility problem in the form of \eqref{Prob_NCF}.  The classical alternating projection method for solving \eqref{Prob_NCF} only has the local linear convergence rate, while the local quadratically convergent variant \cite{schost2016quadratically} is only designed for a very specific nonconvex feasibility problem (i.e., the intersection of a smooth manifold and an affine space). 

We propose a modified alternating projection scheme \eqref{Eq_Feasibility_RestorationQ} for solving the nonconvex feasibility problem \eqref{Prob_NCF}. Our proposed scheme \eqref{Eq_Feasibility_RestorationQ} admits a closed-form update scheme, which only requires the computations of matrix-matrix multiplications and solving a $p\times p$ linear system of equations in each iteration. We prove the local quadratic convergence rate for our proposed scheme \eqref{Eq_Feasibility_RestorationQ}. Moreover, by combining the scheme \eqref{Eq_Feasibility_RestorationQ} with the projected gradient method for solving \eqref{Prob_Con}, we propose a globally convergent algorithm in Algorithm \ref{Alg:Adap_AP2}. Preliminary numerical experiments illustrate the high efficiency of our proposed Algorithm \ref{Alg:Adap_AP2}.

\section*{Acknowledgments}
The authors express their gratitude to Dr.~Kuangyu Ding and Dr.~Di Hou for their valuable comments on alternating projection methods. 
 
\bibliographystyle{plain}
\bibliography{ref}

\end{document}